\newtheorem{theorem}{Theorem}
\newtheorem{lemma}[theorem]{Lemma}
\newtheorem{corollary}{Corollary}
\newtheorem{proposition}{Proposition}
\theoremstyle{definition}
{

\newtheorem{problem}{Problem}
}
\long\def\symbolfootnote[#1]#2{\begingroup
\def\thefootnote{\fnsymbol{footnote}}\footnote[#1]{#2}\endgroup}
\newcommand{\red}[1][\sigma]{\mathrm{red}(#1)}
\newcommand{\sg}{\sigma}
\newcommand{\RLmax}[1][\sigma]{\mathrm{RLmax}(#1)}
\newcommand{\mmp}{\mathrm{mmp}}
\newcommand{\MMP}{\mathrm{MMP}}
\def\LRmin{\mathrm{LRmin}}
\def\nLRmin{\text{non-LRmin}}
\def\A{\mathcal{A}}
\newcommand{\fig}[2]{\begin{figure}[ht]
\centerline{\scalebox{.66}{\epsfig{file=#1.eps}}}
\caption{#2}
\label{fig:#1}
\end{figure}}
\newcommand{\shadetheboxes}[1]{
	\foreach \x/\y in {#1}
      	\fill[pattern color = black!65, pattern=north east lines] (\x,\y) rectangle +(1,1);
	}
\newcommand{\drawthegrid}[1]{
	\draw (0.01,0.01) grid (#1+0.99,#1+0.99);
	}
\newcommand{\drawtheclpattern}[1]{
	\foreach \x/\y in {#1}
      	\filldraw (\x,\y) circle (6pt);
	}
\newcommand{\drawspecialbox}[1]{
	\foreach \x/\y/\z/\w/\A in {#1}
		{
       		\fill[color = white!100, opacity=1, rounded corners = 1.5pt] (\x+0.125,\y+0.125) rectangle (\z-0.125,\w-0.125);
       		\draw[color = black, rounded corners = 1.5pt] (\x+0.125,\y+0.125) rectangle (\z-0.125,\w-0.125);
       		\fill[black] (\x/2+\z/2,\y/2+\w/2) node {$\scriptstyle\A$};
       	}
    }
\newcommand{\mmpattern}[5]{									
  \raisebox{0.6ex}{
  \begin{tikzpicture}[scale=0.35, baseline=(current bounding box.center), #1]
  \useasboundingbox (0.0,-0.1) rectangle (#2+1.4,#2+1.1);
    
    \shadetheboxes{#4}
    
    \drawthegrid{#2}
    
    \drawspecialbox{#5}
    
    \drawtheclpattern{#3}

  \end{tikzpicture}}
}
\title{Quadrant marked mesh patterns in $132$-avoiding permutations I}
\author{
Sergey Kitaev \\
\small University of Strathclyde\\[-0.8ex]
\small Livingstone Tower, 26 Richmond Street\\[-0.8ex]
\small Glasgow G1 1XH, United Kingdom\\[-0.8ex]
\small \texttt{sergey.kitaev@cis.strath.ac.uk}
\and
Jeffrey Remmel \\
\small Department of Mathematics\\[-0.8ex]
\small University of California, San Diego\\[-0.8ex]
\small La Jolla, CA 92093-0112. USA\\[-0.8ex]
\small \texttt{jremmel@ucsd.edu}
\and
Mark Tiefenbruck\\[-0.8ex]
\small Department of Mathematics\\[-0.8ex]
\small University of California, San Diego\\[-0.8ex]
\small La Jolla, CA 92093-0112. USA\\[-0.8ex]
\small \texttt{mtiefenb@math.ucsd.edu}
}
\date{\small Submitted: Date 1;  Accepted: Date 2;
 Published: Date 3.\\
\small MR Subject Classifications: 05A15, 05E05}
\begin{document}
\maketitle

\begin{abstract}
\noindent \

This paper is a continuation of the systematic study of the distributions of quadrant 
marked mesh patterns initiated in \cite{kitrem}. Given a permutation $\sg = \sg_1 \cdots \sg_n$  
in the symmetric group $S_n$, we say that $\sg_i$ matches the 
quadrant marked mesh pattern $MMP(a,b,c,d)$ if there 
are at least $a$ elements to the right of $\sg_i$ in $\sg$ that are 
greater than $\sg_i$, at least 
$b$ elements to left of $\sg_i$ in $\sg$ that are 
greater than $\sg_i$,  at least 
$c$ elements to left of $\sg_i$ in $\sg$ that are 
less than $\sg_i$, and at least 
$d$ elements to the right of $\sg_i$ in $\sg$ that are 
less than $\sg_i$. We study the distribution 
of $MMP(a,b,c,d)$ in 132-avoiding permutations. In particular, 
we study the distribution of  $MMP(a,b,c,d)$, where only one 
of the parameters $a,b,c,d$ are non-zero.  In a subsequent paper \cite{kitremtie}, 
we will study the the distribution of  $MMP(a,b,c,d)$ in 
132-avoiding permutations where at 
least two of the parameters  $a,b,c,d$ are non-zero.\\

\noindent {\bf Keywords:} permutation statistics, marked mesh pattern,
distribution, Catalan numbers, Fibonacci numbers, Fine numbers
\end{abstract}

\tableofcontents

\section{Introduction}

The notion of mesh patterns was introduced by Br\"and\'en and Claesson \cite{BrCl} to provide explicit expansions for certain permutation statistics as (possibly infinite) linear combinations of (classical) permutation patterns.  This notion was further studied in \cite{HilJonSigVid,kitrem,Ulf}. The present paper, as well as the upcoming paper \cite{kitremtie}, are continuations of the systematic study of distributions of quadrant marked mesh patterns on permutations initiated by Kitaev and Remmel \cite{kitrem}.  

In this paper, we study the number of occurrences of what 
we call {\em quadrant marked mesh patterns}. To start with, let $\sigma = \sg_1 \cdots \sg_n$ be a permutation written in one-line notation. Then we will consider the 
graph of $\sg$, $G(\sg)$, to be the set of points $\{(i,\sigma_i): 1 \le i \le n\}$.  For example, the graph of the permutation 
$\sg = 471569283$ is pictured in Figure 
\ref{fig:basic}.  Then if we draw a coordinate system centered at a 
point $(i,\sg_i)$, we will be interested in  the points that 
lie in the four quadrants I, II, III, and IV of that 
coordinate system as pictured 
in Figure \ref{fig:basic}.  For any $a,b,c,d \in  
\mathbb{N}$, where $\mathbb{N} = \{0,1,2, \ldots \}$ is the set of 
natural numbers, 
we say that $\sg_i$ matches the 
quadrant marked mesh pattern $MMP(a,b,c,d)$ in $\sg$ if, in the coordinate system centered at $(i,\sigma_i)$, $G(\sigma)$ has at least $a$ points 
in quadrant I, at least $b$ points in quadrant II, at least $c$ points 
in quadrant III, and at least $d$ points in quadrant IV. 
For example, 
if $\sg = 471569283$, then $\sg_4 =5$  matches 
$MMP(2,1,2,1)$, since relative 
to the coordinate system with origin $(4,5)$,  
$G(\sg)$ has $3$, $1$, $2$, and $2$ points in quadrants I, II, III, and IV, 
respectively. Note that if a coordinate 
in $MMP(a,b,c,d)$ is 0, then there is no condition imposed 
on the points in the corresponding quadrant. 

In addition, we shall 
consider quadrant marked mesh patterns  $MMP(a,b,c,d)$ where 
$a,b,c,d \in \mathbb{N} \cup \{\emptyset\}$. Here, when 
a coordinate of $MMP(a,b,c,d)$ is $\emptyset$, there must be no points in the corresponding quadrant for $\sg_i$ to match  
$MMP(a,b,c,d)$ in $\sg$. For example, if $\sg = 471569283$, then 
$\sg_3 =1$ matches $MMP(4,2,\emptyset,\emptyset)$, since relative 
to the coordinate system with origin $(3,1)$, $G(\sigma)$ has $6$, $2$, $0$, and $0$ points in quadrants I, II, III, and IV, respectively. We let 
$\mmp^{(a,b,c,d)}(\sg)$ denote the number of $i$ such that 
$\sg_i$ matches $MMP(a,b,c,d)$ in $\sg$.

\fig{basic}{The graph of $\sg = 471569283$.}

Note how the (two-dimensional) notation of \'Ulfarsson \cite{Ulf} for marked mesh patterns corresponds to our (one-line) notation for quadrant marked mesh patterns. For example,

\[
\MMP(0,0,k,0)=\mmpattern{scale=2.3}{1}{1/1}{}{0/0/1/1/k}\hspace{-0.25cm},\  \MMP(k,0,0,0)=\mmpattern{scale=2.3}{1}{1/1}{}{1/1/2/2/k}\hspace{-0.25cm},
\]

\[
\MMP(0,a,b,c)=\mmpattern{scale=2.3}{1}{1/1}{}{0/1/1/2/a} \hspace{-2.07cm} \mmpattern{scale=2.3}{1}{1/1}{}{0/0/1/1/b} \hspace{-2.07cm} \mmpattern{scale=2.3}{1}{1/1}{}{1/0/2/1/c} \hspace{-0.25cm}, \ \mbox{ and }\ \ \ \MMP(0,0,\emptyset,k)=\mmpattern{scale=2.3}{1}{1/1}{0/0}{1/0/2/1/k}\hspace{-0.25cm}.
\]

Given a sequence $w = w_1 \cdots w_n$ of distinct integers,
let $\red[w]$ be the permutation found by replacing the
$i$th largest integer that appears in $w$ by $i$.  For
example, if $w = 2754$, then $\red[w] = 1432$.  Given a
permutation $\tau=\tau_1 \cdots \tau_j \in S_j$, we say that the pattern $\tau$ {\em occurs} in $\sg \in S_n$ if there exist 
$1 \leq i_1 < \cdots < i_j \leq n$ such that 
$\red[\sg_{i_1} \cdots \sg_{i_j}] = \tau$.   We say 
that a permutation $\sg$ {\em avoids} the pattern $\tau$ if $\tau$ does not 
occur in $\sg$. We will let $S_n(\tau)$ denote the set of permutations in $S_n$ 
that avoid $\tau$. In the theory of permutation patterns,  $\tau$ is called a {\em classical pattern}. See \cite{kit} for a comprehensive introduction to the area of permutation patterns.

It has been a rather popular direction of research in the literature on permutation patterns to study permutations avoiding a 3-letter pattern subject to extra restrictions (see \cite[Subsection 6.1.5]{kit}). The main goal of this paper and the upcoming paper \cite{kitremtie} is to study  the generating 
functions 
\begin{equation} \label{Rabcd}
Q_{132}^{(a,b,c,d)}(t,x) = 1 + \sum_{n\geq 1} t^n  Q_{n,132}^{(a,b,c,d)}(x), 
\end{equation}
where for  any $a,b,c,d \in \mathbb{N} \cup \{\emptyset\}$,  
\begin{equation} \label{Rabcdn}
Q_{n,132}^{(a,b,c,d)}(x) = \sum_{\sg \in S_n(132)} x^{\mmp^{(a,b,c,d)}(\sg)}.
\end{equation}
More precisely, we will study the generating 
functions $Q_{132}^{(a,b,c,d)}(t,x)$ in 
all cases where exactly one of the coordinates $a,b,c,d$ is non-zero 
and the remaining coordinates are 0 plus the generating functions  
$Q_{132}^{(k,0,\emptyset,0)}(t,x)$ and $Q_{132}^{(\emptyset,0,k,0)}(t,x)$. 
In \cite{kitremtie}, we will study the 
generating functions $Q_{132}^{(a,b,c,d)}(t,x)$ for 
$a,b,c,d \in \mathbb{N}$ where at least two of the parameters 
$a,b,c,d$ are greater than 0.

For example, here are two tables of statistics for 
$S_3(132)$ that we will be interested in. 
\begin{center}

\begin{tabular}{|c|c|c|c|c|}
\hline
$\sg$ &  $\mmp^{(1,0,0,0)}(\sg)$ &  $\mmp^{(0,1,0,0)}(\sg)$ & $\mmp^{(0,0,1,0)}(\sg)$ & $\mmp^{(0,0,0,1)}(\sg)$  \\
\hline
123 & 2 & 0 & 2 & 0  \\
\hline
213 & 2 & 1 & 1 & 1  \\
\hline
231 & 1 & 1 & 1 & 2  \\
\hline
312 & 1 & 2 & 1 & 1  \\
\hline
321 & 0 & 2 & 0 & 2  \\
\hline
\end{tabular}\\
\ \\
\ \\
\begin{tabular}{|c|c|c|c|c|}
\hline
$\sg$ &  $\mmp^{(2,0,0,0)}(\sg)$ & $\mmp^{(0,2,0,0)}(\sg)$ & $\mmp^{(0,0,2,0)}(\sg)$ & $\mmp^{(0,0,0,2)}(\sg)$ \\
\hline
123 &  1 & 0 & 1 & 0 \\
\hline
213 &  0 & 0 & 1 & 0 \\
\hline
231 &  0 & 1 & 0 & 0 \\
\hline
312 &  0 & 0 & 0 & 1\\
\hline
321 &  0 & 1 & 0 & 1 \\
\hline
\end{tabular}

\end{center}

Note that there is one obvious symmetry in this case. That is, 
we have the following lemma. 
\begin{lemma} \label{sym}
For any $a,b,c,d \in \mathbb{N} \cup \{\emptyset\}$,
$\displaystyle 
Q_{n,132}^{(a,b,c,d)}(x) = Q_{n,132}^{(a,d,c,b)}(x)$.
\end{lemma}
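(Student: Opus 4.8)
The plan is to exhibit an explicit bijection on $S_n(132)$ that interchanges the roles of quadrants II and IV while leaving quadrants I and III untouched, and the natural candidate is the inverse map $\sigma \mapsto \sigma^{-1}$. Geometrically, passing from $\sigma$ to $\sigma^{-1}$ reflects the graph $G(\sigma)$ across the main diagonal $y = x$, sending each point $(i,\sigma_i)$ to the point $(\sigma_i, i)$. The whole argument rests on translating this reflection into the combinatorial quadrant conditions, so I would organize it into three short steps: checking that the map is a well-defined involution of $S_n(132)$, tracking the quadrants, and then summing the resulting entry-by-entry identity.

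First I would verify that the inverse map is a well-defined bijection on $S_n(132)$. The only point requiring argument is that $\sigma^{-1}$ avoids $132$ whenever $\sigma$ does, and this follows because the pattern $132$ is its own inverse as a permutation (that is, $132^{-1} = 132$): an occurrence of $132$ in $\sigma^{-1}$ would correspond to an occurrence of $132^{-1} = 132$ in $\sigma$. This inverse-invariance of $132$-avoidance is the one genuinely pattern-specific input, and I expect it to be the crux of why the symmetry holds for $132$ in particular; it is the main (mild) obstacle, everything else being routine.

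Next I would make the quadrant bookkeeping precise. Fixing $i$ and writing $p = (i,\sigma_i)$, a point $(j,\sigma_j)$ of $G(\sigma)$ lies in quadrant II of $p$ exactly when $j < i$ and $\sigma_j > \sigma_i$; under the reflection its image $(\sigma_j, j)$ satisfies $\sigma_j > \sigma_i$ and $j < i$, hence lies in quadrant IV of the image point $(\sigma_i, i)$ in $G(\sigma^{-1})$. Symmetrically quadrant IV maps to quadrant II, while the defining inequalities for quadrants I and III are left unchanged by swapping the two coordinates. Consequently $\sigma_i$ matches $MMP(a,b,c,d)$ in $\sigma$ if and only if the corresponding entry of $\sigma^{-1}$ matches $MMP(a,d,c,b)$ in $\sigma^{-1}$; this equivalence holds verbatim when any of $a,b,c,d$ equals $\emptyset$, since the ``no points'' condition is also preserved by the reflection.

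Finally, summing this equivalence over all positions yields $\mmp^{(a,b,c,d)}(\sigma) = \mmp^{(a,d,c,b)}(\sigma^{-1})$ for every $\sigma \in S_n(132)$. Because $\sigma \mapsto \sigma^{-1}$ is a bijection of $S_n(132)$ onto itself, substituting this identity into the definition $Q_{n,132}^{(a,b,c,d)}(x) = \sum_{\sigma \in S_n(132)} x^{\mmp^{(a,b,c,d)}(\sigma)}$ and reindexing the sum by $\tau = \sigma^{-1}$ produces exactly $Q_{n,132}^{(a,d,c,b)}(x)$, which establishes the lemma.
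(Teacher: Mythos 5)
Your proposal is correct and takes essentially the same approach as the paper: the map $\sigma \mapsto \sigma^{-1}$, viewed as reflection of $G(\sigma)$ across the line $y=x$, sends quadrants I, II, III, IV to I, IV, III, II, and summing over positions gives the identity. Your explicit check that $132^{-1}=132$ (so that $132$-avoidance is preserved under inversion) is a detail the paper leaves implicit, but otherwise the two arguments coincide.
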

\begin{proof}
If we start with the graph $G(\sg)$ of a permutation $\sg \in S_n(132)$ and 
reflect the graph about the line $y=x$, then we get the permutation 
$\sg^{-1}$, which is also in $S_n(132)$. It is easy to see that 
points in quadrants I, II, III, and IV in the coordinate system with origin  
$(i,\sg_i)$ in $G(\sg)$ will reflect to points in 
quadrants I, IV, III, and II, respectively, in the 
coordinate system with origin
$(\sg_i,i)$ in $G(\sg^{-1})$.
It follows that the map $\sg \rightarrow \sg^{-1}$ shows that 
$Q_{n,132}^{(a,b,c,d)}(x) = Q_{n,132}^{(a,d,c,b)}(x)$. 
\end{proof}

As a matter of fact, {\em avoidance} of a marked mesh pattern 
$MMP(a,b,c,d)$ with $a,b,c,d \in \mathbb{N}$ can always be expressed in terms of multi-avoidance of (usually many) classical patterns. 
For example, a permutation 
$\sg \in S_n$ avoids the pattern $MMP(2,0,0,0)$ if and only if 
it avoids both $123$ and $132$.
Thus, among our results we will re-derive several known facts in the permutation patterns theory and get seemly new enumeration of permutations avoiding simultaneously the patterns 132 and 1234 (see the discussion right below (\ref{3000rec})). However, our main goals are more ambitious in that we will compute the generating function 
for the   distribution of the occurrences of the pattern  in question, 
not just the generating function for the number of permutations 
that avoid the pattern.

For any $a,b,c,d$, we will write $Q_{n,132}^{(a,b,c,d)}(x)|_{x^k}$ for 
the coefficient of $x^k$ in $Q_{n,132}^{(a,b,c,d)}(x)$. 
We shall also show 
that sequences of the form $(Q_{n,132}^{(a,b,c,d)}(x)|_{x^r})_{n \geq s}$ 
count a variety of combinatorial objects that appear 
in the {\em On-line Encyclopedia of Integer Sequences} (OEIS) \cite{oeis}.
Thus, our results will give new combinatorial interpretations 
of such classical sequences as the Fine numbers and the Fibonacci 
numbers, as well as provide certain sequences that appear in the OEIS 
with a combinatorial interpretation where none had existed before.

\section{Connections with other combinatorial objects}

It is well-known that the cardinality of $S_n(132)$ is the $n$th 
Catalan number $C_n = \frac{1}{n+1}\binom{2n}{n}$.  There are many combinatorial
interpretations of the Catalan numbers. For example, in his book 
\cite{Stan}, Stanley lists 66 different combinatorial interpretations 
of the Catalan numbers, and he 
gives many more combinatorial interpretations of the Catalan numbers 
on his web site. 
Hence, any time one has a natural 
bijection from $S_n(132)$ into a set of combinatorial objects 
$O_n$ counted by the $n$th Catalan number, one can use the bijection 
to transfer our statistics  $\mmp^{(a,b,c,d)}$ to corresponding statistics 
on the elements of $O_n$. In this section, we shall briefly describe 
some of these statistics in  two of the most well-known 
interpretations of the Catalan numbers, namely Dyck paths and 
binary trees.

A Dyck path of length $2n$ is a path that starts at $(0,0)$ and 
ends at the point $(2n,0)$ that consists of a sequence of up-steps $(1,1)$ and 
down-steps $(1,-1)$ such that the path always stays on or above the $x$-axis. We will generally encode a Dyck path by its sequence of up-steps and down-steps.
Let $\mathcal{D}_{2n}$ denote the set of Dyck paths of length $2n$. 
Then it is easy to construct a bijection 
$\phi_n:S_n(132) \rightarrow \mathcal{D}_{2n}$ by induction. 
To define $\phi_n$, we need to define the lifting of a 
path $P \in \mathcal{D}_{2n}$ to a 
path $L(P) \in \mathcal{D}_{2n+2}$. Here $L(P)$ is constructed 
by simply appending an up-step at the start of $P$ and 
a down-step at the end of $P$.  That is, if   
$P = (p_1, \ldots, p_{2n})$, then $L(P) = ((1,1),p_1, \ldots, p_{2n},(1,-1))$.
An example of this map is pictured in Figure \ref{fig:map}. If 
$P_1 \in \mathcal{D}_{2k}$ and $P_2 \in \mathcal{D}_{2n-2k}$, 
we let $P_1P_2$ denote the element of $\mathcal{D}_{2n}$ that 
consists of the path $P_1$ followed by the path $P_2$. 

\fig{map}{The lifting of a Dyck path.}

To define $\phi_n$, we first let $\phi_1(1) = ((1,1),(1,-1))$. For any $n > 1$ 
and any $\sg \in S_n(132)$, 
we define $\phi_n(\sg)$ by cases as follows.\\
\ \\
{\bf Case 1.}  $\sg_n =n$. \\
Then $\phi_n(\sg) = L(\phi_{n-1}(\sg_1 \cdots \sg_{n-1}))$.\\
\ \\
{\bf Case 2.} $\sg_i = n$, where $1 \le i < n$. In this case, 
$\phi_n(\sg) = P_1P_2$, where \\ 
$P_1 = \phi_i(\red[\sg_1 \cdots \sg_i])$ and 
$P_2 = \phi_{n-i}(\red[\sg_{i+1} \cdots \sg_n])=\phi_{n-i}(\sg_{i+1} \cdots \sg_n)$. \\
\ \\
We have pictured this map for the first few values of $n$ by listing  
the permutation $\sg$ on the left and the value of 
$\phi_n(\sg)$ on the right in Figure \ref{fig:example}. 

\fig{example}{Some initial values of the map $\phi_n$.}

Suppose we are given a path $P =(p_1, \ldots, p_{2n}) \in 
\mathcal{D}_{2n}$. Then we say that a step $p_i$ has height $s$ if $p_i$ is an up-step 
and the right-hand end point 
of $p_i$ is $(i,s)$ or $p_i$ is a down-step and the left-hand  
end point of $p_i$ is $(i-1,s)$. We say that $(p_i, \ldots, p_{i+2k-1})$ 
is an {\em interval of length $2k$} if $p_i$ is an up-step, 
$p_{i+2k-1}$ is a down-step, $p_i$ and $p_{i+2k-1}$ have  
height 1, and, for all $i < j < i+2k-1$, the height of $p_j$ is 
strictly greater than 1. Thus, an interval is a segment of the 
path that starts and ends on the $x$-axis but does not hit the $x$-axis 
in between. For example, if we consider the path 
$\phi_3(312) =(p_1, \ldots, p_6)$ pictured in Figure \ref{fig:example}, 
then the  heights of the steps reading from left to right are 
$1,1,1,2,2,1$ and there are two intervals, one of length 2 consisting 
of $(p_1,p_2)$ and one of length 4 consisting of $(p_3, p_4, p_5, p_6)$. 

The following theorem is straightforward to prove 
by induction. 

\begin{theorem}\label{Thm2} Let $k \geq 1$. 
\begin{enumerate}
\item For any $\sg \in S_n(132)$, $\mmp^{(k,0,0,0)}(\sg)$ is equal 
to the number of up-steps (equivalently, to the number of down-steps) of height $\geq k+1$ in $\phi_n(\sg)$.  

\item  For any $\sg \in S_n(132)$, $1$ plus the maximum $k$ such 
that $\mmp^{(0,0,k,0)}(\sg)\neq 0$ is equal 
to one half the  maximum length of an interval in $\phi_n(\sg)$.
\end{enumerate}
\end{theorem}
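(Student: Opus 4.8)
The plan is to prove both parts by induction on $n$, following the two cases in the recursive definition of $\phi_n$. The engine of the argument is the structure of $132$-avoiding permutations: if $\sg \in S_n(132)$ has $\sg_i = n$, then to avoid $1\,3\,2$ every entry in positions $1,\dots,i-1$ must exceed every entry in positions $i+1,\dots,n$, so $\alpha := \sg_1 \cdots \sg_i$ uses exactly the values $\{n-i+1,\dots,n\}$ and $\beta := \sg_{i+1}\cdots\sg_n$ uses exactly $\{1,\dots,n-i\}$ (and $\beta$ is already reduced). In Case 1 ($\sg_n=n$), $\phi_n(\sg)=L(\phi_{n-1}(\sg'))$ with $\sg'=\sg_1\cdots\sg_{n-1}$; in Case 2, $\phi_n(\sg)=P_1P_2$ with $P_1=\phi_i(\red[\alpha])$ and $P_2=\phi_{n-i}(\beta)$. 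I would repeatedly use two elementary facts: (i) lifting $L$ wraps a path in an up-step/down-step pair, raising the height of every original step by $1$ and turning the whole path into a single arch; and (ii) concatenation $P_1P_2$ of Dyck paths leaves every step at its original height and juxtaposes the two primitive (first-return) decompositions, so every interval of $P_1P_2$ is an interval of $P_1$ or of $P_2$.

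For Part 1, I would first record the standard fact that for each $h$ the number of up-steps of height exactly $h$ equals the number of down-steps of height exactly $h$, seen by balancing the upward and downward crossings of the line $y=h-\tfrac12$ (the path begins and ends on the axis); summing over $h\ge k+1$ yields the claimed equivalence of up- and down-step counts, so it suffices to count up-steps. I would then prove, for all $k\ge 0$, that $\mmp^{(k,0,0,0)}(\sg)$ equals the number of up-steps of height $\ge k+1$ in $\phi_n(\sg)$, the base $k=0$ being the tautology $n=n$. In Case 1, appending the new maximum $\sg_n=n$ adds exactly one to the larger-entries-to-the-right count of each earlier entry, so $\mmp^{(k,0,0,0)}(\sg)=\mmp^{(k-1,0,0,0)}(\sg')$ for $k\ge 1$; this matches fact (i), since up-steps of height $\ge k+1$ in $L(\phi_{n-1}(\sg'))$ are exactly the up-steps of height $\ge k$ in $\phi_{n-1}(\sg')$. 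In Case 2, because no entry of $\beta$ exceeds any entry of $\alpha$, the larger-entries-to-the-right of any entry lie entirely in its own block, whence $\mmp^{(k,0,0,0)}(\sg)=\mmp^{(k,0,0,0)}(\red[\alpha])+\mmp^{(k,0,0,0)}(\beta)$, matching the additivity of the height count under fact (ii). The inductive hypothesis closes both cases.

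For Part 2, let $M(\sg)=\max_i |\{\, j<i : \sg_j<\sg_i \,\}|$; since $\mmp^{(0,0,k,0)}(\sg)\ne 0$ exactly when some entry has at least $k$ smaller entries to its left, $M(\sg)$ is precisely the maximum $k$ with $\mmp^{(0,0,k,0)}(\sg)\ne 0$, so the goal is to show the maximum interval length of $\phi_n(\sg)$ equals $2(1+M(\sg))$. In Case 1, $L$ turns $\phi_{n-1}(\sg')$ into a single arch of length $2n$, while the new maximum at position $n$ has all $n-1$ smaller entries to its left and no position can do better, giving $M(\sg)=n-1$ and $2(1+M(\sg))=2n$. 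In Case 2, by fact (ii) the intervals of $P_1P_2$ are those of $P_1$ together with those of $P_2$, so the maximum interval length is the larger of the two; correspondingly, since $\alpha$ contributes no entry smaller than any entry of $\beta$, the smaller-entries-to-the-left count of each position is computed within its own block, giving $M(\sg)=\max\big(M(\red[\alpha]),\,M(\beta)\big)$. The inductive hypothesis then finishes the case, with the decreasing permutation (mapping to $(UD)^n$) and the base $n=1$ checking the extreme $M(\sg)=0$.

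The routine parts are the position-by-position verifications of the identities $\mmp^{(k,0,0,0)}(\sg)=\mmp^{(k,0,0,0)}(\red[\alpha])+\mmp^{(k,0,0,0)}(\beta)$ and $M(\sg)=\max(M(\red[\alpha]),M(\beta))$, which are immediate once the big-left/small-right block structure is in hand. The main (if mild) obstacle is the height-shift bookkeeping in Part 1: lifting sends "height $\ge k+1$" to "height $\ge k$", forcing the parameter to drop by one in the induction and making it necessary to carry the $k=0$ base statement through the argument rather than only $k\ge 1$. For Part 2 the analogous subtlety is recognizing that intervals are exactly the primitive (first-return) components, so that concatenation acts by juxtaposition and lifting by forming a single arch.
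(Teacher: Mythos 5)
Your proposal is correct and takes essentially the same route as the paper's proof: induction on $n$ through the two cases of the definition of $\phi_n$, using the same block structure of $132$-avoiding permutations and the same additivity of both statistics under concatenation, with the lifting case handled identically. The only cosmetic differences are that you strengthen Part 1 to all $k \geq 0$ (so the drop from $k$ to $k-1$ under lifting needs no separate $k=1$ check, which the paper does verify separately) and that you make the up-step/down-step equivalence explicit via a crossing argument where the paper merely asserts it.
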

\begin{proof}
We proceed by induction on $n$.  Clearly the theorem is true 
for $n =1$.  Now suppose that $n > 1$ and the theorem 
is true for all $m <n$.  Let $\sg \in S_n(132)$ 
with $\sg_i =n$. Then it must be the case that 
$\sg_1, \ldots, \sg_{i-1}$ are all strictly bigger than all 
the elements in $\{\sg_{i+1}, \ldots, \sg_{n}\}$, so
$\{1, \ldots, n-i\} = \{\sg_{i+1}, \ldots, \sg_{n}\}$ and 
$\{n-i+1, \ldots, n\} = \{\sg_1, \ldots, \sg_i\}$. Now consider the 
two cases in the definition of $\phi_n$. \\
\ \\
{\bf Case 1.} $\sg_n =n$. \\
In this case, 
$\phi_n(\sg) = L(P)$, where 
$P= \phi_{n-1}(\sg_1 \cdots \sg_{n-1})$. Thus, for 
$k \geq 2$, the number of up-steps of height 
$> k$ in $\phi_n(\sg)$ equals the number of up-steps 
of height $\geq k$ in $\phi_{n-1}(\sg_1 \cdots \sg_{n-1})$, which equals
$\mmp^{(k-1,0,0,0)}(\sg_1 \cdots \sg_{n-1})$ by induction. 
But since $\sg_n =n$, it is clear that for $k \geq 2$, 
$\mmp^{(k-1,0,0,0)}(\sg_1 \cdots \sg_{n-1}) = \mmp^{(k,0,0,0)}(\sg)$. 
Thus, $\mmp^{(k,0,0,0)}(\sg)$ equals the number of up-steps of 
height $> k$ in $\phi_n(\sg)$.  Finally, $\mmp^{(1,0,0,0)}(\sg) =n-1$, 
and there are $n-1$ up-steps of height $\geq 2$ in $\phi_n(\sg)$. 

In this case, the maximum length of an interval in $\phi_n(\sg)$ equals $2n$ 
and $\sg_n =n$ shows that $\mmp^{(0,0,n-1,0)}(\sg) =1$, so
one half of the maximum length interval in $\phi_n(\sg)$ equals 
1 plus the maximum $k$ such that $\mmp^{(0,0,k,0)}(\sg) \neq 0$. \\
\ \\
{\bf Case 2}. $\sg_i = n$, where $1 \leq i \leq n-1$. \\
In this case, 
$\phi_n(\sg) = P_1P_2$, where 
$P_1 = \phi_i(\red[\sg_1 \cdots \sg_i])$ and 
$P_2 = \phi_{n-i}(\sg_{i+1} \cdots \sg_n)$. It follows 
that for any $k \geq 1$, the number of up-steps of 
height $> k$ in $\phi_n(\sg)$ equals the number of 
up-steps of height $> k$ in $P_1$ 
plus the number of up-steps of height $> k$ in $P_2$, which by induction is equal to 
$$\mmp^{(k,0,0,0)}(\red[\sg_1 \cdots \sg_i]) + \mmp^{(k,0,0,0)}(\sg_{i+1} \cdots \sg_{n}).$$ But clearly
$$\mmp^{(k,0,0,0)}(\sg) = \mmp^{(k,0,0,0)}(\red[\sg_1 \cdots \sg_i])
+ \mmp^{(k,0,0,0)}(\sg_{i+1} \cdots \sg_{n}),$$ 
so $\mmp^{(k,0,0,0)}(\sg)$ is equal to the number of 
up-steps of height $> k$ in $\phi_n(\sg)$. 

Finally, the maximum length of an interval in $\phi_n(\sigma)$ is the maximum 
of the maximum length intervals in $P_1$ and $P_2$. On the other hand, the maximum $k$ such that $\mmp^{(0,0,k,0)}(\sg) \neq 0$ is the maximum $k$ such that $\mmp^{(0,0,k,0)}(\red[\sg_1 \cdots \sg_i]) \neq 0$ or \\
$\mmp^{(0,0,k,0)}(\sg_{i+1} \cdots \sg_n) \neq 0$. Thus, it follows from the induction hypothesis that one half of the maximum length of 
an interval in $\phi_n(\sg)$ is 1 plus the maximum $k$ such that $\mmp^{(0,0,k,0)}(\sg) \neq 0$.
\end{proof}

We have the following corollary to Theorem \ref{Thm2}. 

\begin{corollary}\label{Dyckpaths} Let $k \geq 1$.
\begin{enumerate}
\item The number of permutations $\sg \in S_n(132)$ such that 
$\mmp^{(k,0,0,0)}(\sg) =0$ equals the number of Dyck paths 
$P \in \mathcal{D}_{2n}$ such that all steps have height 
$\leq k$. 

\item The number of permutations $\sg \in S_n(132)$ such that 
$\mmp^{(0,0,k,0)}(\sg) =0$ equals the number of Dyck paths 
$P \in \mathcal{D}_{2n}$ such that the maximum length of an interval is  
$\leq 2k$. 
 
\end{enumerate}
\end{corollary}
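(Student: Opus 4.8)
The plan is to deduce Corollary~\ref{Dyckpaths} directly from Theorem~\ref{Thm2} by translating each of the two statistical conditions into a structural condition on the corresponding Dyck path under the bijection $\phi_n$. Since $\phi_n:S_n(132)\to\mathcal{D}_{2n}$ is a bijection (as established in the construction preceding Theorem~\ref{Thm2}), it suffices to show that for each part, the set of permutations satisfying the stated vanishing condition maps bijectively onto the set of Dyck paths satisfying the stated height/interval condition. Because $\phi_n$ is a bijection, equality of the two sets under $\phi_n$ immediately yields equality of cardinalities, which is exactly what the corollary asserts.

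For part~(1), I would argue as follows. By Theorem~\ref{Thm2}(1), for a permutation $\sg\in S_n(132)$ the quantity $\mmp^{(k,0,0,0)}(\sg)$ equals the number of up-steps of height $\geq k+1$ in $\phi_n(\sg)$. Hence $\mmp^{(k,0,0,0)}(\sg)=0$ holds if and only if $\phi_n(\sg)$ has no up-steps of height $\geq k+1$, which is precisely the condition that every up-step of $\phi_n(\sg)$ has height $\leq k$. Since in a Dyck path every down-step at a given height is matched with an up-step at that same height, the path has an up-step of height $\geq k+1$ if and only if it reaches height $\geq k+1$ at some point, equivalently if and only if some step (up or down) has height $\geq k+1$; thus the condition is equivalent to all steps having height $\leq k$. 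Therefore $\phi_n$ restricts to a bijection between $\{\sg\in S_n(132):\mmp^{(k,0,0,0)}(\sg)=0\}$ and $\{P\in\mathcal{D}_{2n}:\text{all steps of }P\text{ have height}\leq k\}$, giving the stated equality of counts.

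For part~(2), I would use Theorem~\ref{Thm2}(2), which states that $1$ plus the maximum $k'$ with $\mmp^{(0,0,k',0)}(\sg)\neq 0$ equals one half the maximum interval length in $\phi_n(\sg)$. The slight subtlety here is that $\mmp^{(0,0,k,0)}(\sg)=0$ is a statement about a single value of $k$, whereas Theorem~\ref{Thm2}(2) concerns the maximum such $k'$; so I would first note that $\mmp^{(0,0,k',0)}(\sg)$ is weakly decreasing in $k'$ (requiring at least $k'$ points in quadrant III becomes harder as $k'$ grows), so that $\mmp^{(0,0,k,0)}(\sg)=0$ is equivalent to the maximum $k'$ with $\mmp^{(0,0,k',0)}(\sg)\neq 0$ being at most $k-1$. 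By Theorem~\ref{Thm2}(2) this maximum $k'$ equals one half the maximum interval length minus $1$, so the condition becomes: one half the maximum interval length is at most $k$, i.e.\ the maximum interval length is at most $2k$. This establishes the desired bijective correspondence and hence the equality of counts.

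The main obstacle I anticipate is the bookkeeping in part~(2): one must handle the boundary behavior carefully, in particular the degenerate case where $\sg$ is the identity-like permutation whose image is a single interval of full length $2n$, and the edge case $\mmp^{(0,0,k,0)}(\sg)\equiv 0$ for all $k$ (which occurs, e.g., when $n=1$ or when no element of $\sg$ has two smaller elements to its left). In these cases one should verify that the convention ``maximum $k'$ with $\mmp^{(0,0,k',0)}(\sg)\neq 0$'' and the translation to interval length remain consistent with Theorem~\ref{Thm2}(2); once the monotonicity of $\mmp^{(0,0,k',0)}$ in $k'$ is recorded, the rest is a routine restatement of the theorem. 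Part~(1) I expect to be entirely straightforward, the only point worth a sentence being the equivalence between ``an up-step of height $\geq k+1$ exists'' and ``some step has height $\geq k+1$'' in a Dyck path.
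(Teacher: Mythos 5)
Your proposal is correct and is essentially the paper's own route: the paper states this corollary without proof as an immediate consequence of Theorem~\ref{Thm2} under the bijection $\phi_n$, which is exactly the derivation you carry out. Your added observations --- that $\mmp^{(0,0,k',0)}(\sg)$ is weakly decreasing in $k'$, and that a Dyck path has a step of height $\geq k+1$ if and only if it has an up-step of that height --- are precisely the (routine) details the paper leaves implicit, and both are handled correctly.
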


Another set counted by the Catalan numbers is the set 
of rooted binary trees on $n$ nodes where each node is either 
a leaf, a node with a left child, a node with a right child, or 
a node with both a right and a left child. Let $\mathcal{B}_n$ denote 
the set of rooted binary trees with $n$ nodes. Then it is well-known that $|\mathcal{B}_n| =C_n$.  In this 
paper, we shall draw binary trees with their root at the bottom 
and the tree growing upward. Again it is easy 
to define a bijection $\theta_n:S_n(132) \rightarrow \mathcal{B}_n$ 
by induction. Start with a single node, denoted the root, and let $i$ be such that $\sg_i = n$. Then, if $i > 1$, the root will have a left child, and the subtree above that child is $\theta_{i-1}(\red[\sg_1 \cdots \sg_{i-1}])$. If $i < n$, then the root will have a right child, and the subtree above that child is $\theta_{n-i}(\sg_{i+1} \cdots \sg_n)$.
We have pictured the first few values of this map by listing a permutation $\sg$ on the left and the value of 
$\theta_n(\sg)$ on the right in Figure \ref{fig:example2}.

\fig{example2}{Some initial values of the map $\theta_n$.}

If $T \in \mathcal{B}_n$ and $\eta$ is a node of $T$, then 
the left subtree of $\eta$ is the subtree of $T$ whose root is 
the left child of $\eta$ and 
the right subtree of $\eta$ is the subtree of $T$ whose root is 
the right child of $\eta$. The edge that connects $\eta$ to its 
left child will be called a {\em left edge} and the edge that connects $\eta$ to
its right child will be called a {\em right edge}. 

The following theorem is straightforward to prove by induction. 

\begin{theorem} Let $k \geq 1$. 
\begin{enumerate}
\item For any $\sg \in S_n(132)$, 
$\mmp^{(k,0,0,0)}(\sg)$ is equal to the number of 
nodes $\eta$  in $\theta_n(\sg)$ such that there are 
$\geq k$ left edges on the path from $\eta$ to the root of $\theta_n(\sg)$. 

\item  For any $\sg \in S_n(132)$, $\mmp^{(0,0,k,0)}(\sg)$ is 
the number of nodes $\eta$ in $\theta_n(\sg)$ whose left subtree 
has size $\geq k$.
\end{enumerate}
\end{theorem}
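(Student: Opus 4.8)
The plan is to induct on $n$, mirroring the proof of Theorem~\ref{Thm2} and exploiting the same structural feature of $132$-avoiders. The base case $n=1$ is immediate, since the single-node tree has no non-root nodes and its only left subtree is empty. For the inductive step, write $\sg \in S_n(132)$ with $\sg_i = n$. As already observed in the proof of Theorem~\ref{Thm2}, avoidance of $132$ forces $\{\sg_1, \ldots, \sg_{i-1}\} = \{n-i+1, \ldots, n-1\}$ and $\{\sg_{i+1}, \ldots, \sg_n\} = \{1, \ldots, n-i\}$; in particular every entry to the left of $n$ exceeds every entry to its right. Recall that $\theta_n(\sg)$ has its root corresponding to $\sg_i = n$, with left subtree $\theta_{i-1}(\red[\sg_1 \cdots \sg_{i-1}])$ attached by a left edge and right subtree $\theta_{n-i}(\sg_{i+1} \cdots \sg_n)$ attached by a right edge.

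For part (1), I would track quadrant~I (points above and to the right) for each entry. An entry $\sg_j$ with $j < i$ has, to its right, exactly the larger entries among $\sg_{j+1}, \ldots, \sg_{i-1}$ together with $\sg_i = n$, since every right-block entry is smaller than $\sg_j$; thus its quadrant-I count equals its quadrant-I count inside $\red[\sg_1\cdots\sg_{i-1}]$ plus one, so $\sg_j$ matches $MMP(k,0,0,0)$ in $\sg$ exactly when the corresponding entry matches $MMP(k-1,0,0,0)$ in $\red[\sg_1 \cdots \sg_{i-1}]$. An entry $\sg_j$ with $j > i$ sees no larger entries outside its own block, so it matches $MMP(k,0,0,0)$ in $\sg$ exactly when it matches $MMP(k,0,0,0)$ in $\sg_{i+1}\cdots\sg_n$. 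On the tree side, the path from a left-subtree node to the root of $\theta_n(\sg)$ has exactly one more left edge than its path to the root of the left subtree, so it carries $\geq k$ left edges iff it carried $\geq k-1$ there, while a right-subtree node's path picks up only a right edge and so has unchanged left-edge count. Since the root itself has $0 \not\geq k$ left edges for $k \geq 1$, summing the two blocks and invoking the inductive hypothesis (with parameter $k-1$ on the left and $k$ on the right) gives the claim. The one point needing care is the boundary $k=1$, where the left-block statement becomes a claim about $MMP(0,0,0,0)$: here both sides simply count all $i-1$ left-block entries (each of which indeed has $n$ to its right), so the identity persists.

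For part (2), I would track quadrant~III (points below and to the left). The root entry $\sg_i=n$ has exactly $i-1$ smaller entries to its left, so it matches $MMP(0,0,k,0)$ iff $i-1 \geq k$, which is precisely the condition that the root's left subtree $\theta_{i-1}(\red[\sg_1\cdots\sg_{i-1}])$ has size $\geq k$. For $j \neq i$, the quadrant-III count of $\sg_j$ is confined to $\sg_j$'s own block: a left-block entry has nothing to its left outside that block, and a right-block entry has only larger entries (the left block and $n$) preceding its block, none of which lie in quadrant~III. Hence these counts, and the matching of $MMP(0,0,k,0)$, are preserved under reduction to the two subpermutations. Because the recursive construction of $\theta_n$ leaves the left subtree of each non-root node identical to its left subtree in the corresponding subtree factor, the nodes of $\theta_n(\sg)$ whose left subtree has size $\geq k$ are exactly the root (when $i-1 \geq k$) together with such nodes in each of the two subtrees, and summing the root contribution with the two inductive hypotheses yields $\mmp^{(0,0,k,0)}(\sg)$.

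The substantive content in both parts is the block decomposition guaranteed by $132$-avoidance: it is what makes the quadrant-I and quadrant-III counts additive over the split at $n$ and compatible with reduction to the two subpermutations. I expect the only genuinely delicate bookkeeping to be the off-by-one in part (1) (the extra left edge on one side matching the extra larger entry $n$ on the other) together with the accompanying $k=1$ boundary case; everything else is a direct transcription of the tree recursion into the statistics, with empty left or right blocks contributing $0$ in the degenerate cases $i=1$ and $i=n$.
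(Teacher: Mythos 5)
Your proposal is correct and takes essentially the same route as the paper's proof: both induct on $n$, split $\sg$ at $\sg_i = n$ using the block structure forced by $132$-avoidance, handle part (1) via the shift from $MMP(k,0,0,0)$ to $MMP(k-1,0,0,0)$ on the left block (the extra larger entry $n$ matching the extra left edge), and handle part (2) by adding the root's contribution $\chi(i > k)$ to the two inductive counts. Your explicit treatment of the $k=1$ boundary in part (1), where the left-block claim degenerates to counting all $i-1$ entries, is a minor point the paper leaves implicit.
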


\begin{proof}
We proceed by induction on $n$.  Clearly the theorem is true 
for $n =1$.  Now suppose that $n > 1$ and the theorem 
is true for all $m <n$.  Let $\sg \in S_n(132)$ 
with $\sg_i =n$, let $r$ be the root of $\theta_n(\sg)$, and let $\eta$ be a node in $\theta_n(\sg)$.

If $\eta$ is in $r$'s left subtree, then $\eta$ has $\ge k$ left edges on the path to $r$ if and only if it has $\ge k-1$ left edges on the path to the root of the left subtree of $r$. If $\eta$ is in $r$'s right subtree, then $\eta$ has $\ge k$ left edges on the path to $r$ if and only if it has $\ge k$ left edges on the path to the root of the right subtree of $r$. Therefore, by the induction hypothesis the number of nodes with $\ge k$ left edges on the path to the root is $\mmp^{(k-1,0,0,0)}(\red[\sg_1 \cdots \sg_{i-1}]) + \mmp^{(k,0,0,0)}(\sg_{i+1} \cdots \sg_n)$, regarding each term as $0$ if there is no corresponding subtree. However, since each term in $\sg_1 \cdots \sg_{i-1}$ has $n$ to the right of it and $n$ never matches $MMP(k,0,0,0)$, we see that $\mmp^{(k-1,0,0,0)}(\red[\sg_1 \cdots \sg_{i-1}]) = \mmp^{(k,0,0,0)}(\red[\sg_1 \cdots \sg_i])$. Thus, the number of nodes with $\ge k$ left edges on the path to the root is $\mmp^{(k,0,0,0)}(\red[\sg_1 \cdots \sg_i]) + \mmp^{(k,0,0,0)}(\sg_{i+1} \cdots \sg_n) = \mmp^{(k,0,0,0)}(\sg)$.

It is clear that the number of nodes with left subtrees of size $\ge k$ is equal to the sum of those from each subtree of the root, possibly plus one for the root itself. In other words, if $\chi(\text{statement})$ equals $1$ if the statement is true and $0$ otherwise, then by the induction hypothesis, the number of such nodes is $\mmp^{(0,0,k,0)}(\red[\sg_1 \cdots \sg_{i-1}]) + \mmp^{(0,0,k,0)}(\sg_{i+1} \cdots \sg_n) + \chi(i > k)$, again regarding each term as $0$ if there is no corresponding subtree. However, since $n$ does not affect whether any other point matches $MMP(0,0,k,0)$ and itself matches whenever $i > k$, we see this number of nodes is precisely equal to $\mmp^{(0,0,k,0)}(\sg)$.
\end{proof}

Thus, we have the following corollary. 

\begin{corollary}\label{trees} Let $k \geq 1$.
\begin{enumerate}
\item The number of permutations $\sg \in S_n(132)$ such that 
$\mmp^{(k,0,0,0)}(\sg) =0$ equals the number of rooted binary 
trees $T \in \mathcal{B}_n$ that have no nodes $\eta$ with 
$\geq k$ left edges on the path from $\eta$ to the root of $T$.

\item The number of permutations $\sg \in S_n(132)$ such that 
$\mmp^{(0,0,k,0)}(\sg) =0$ equals the number of rooted binary 
trees $T \in \mathcal{B}_n$ such that there is no 
node $\eta$ of $T$ whose left subtree has size $\geq k$.  

\end{enumerate}
\end{corollary}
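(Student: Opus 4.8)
The plan is to read off Corollary~\ref{trees} directly from the theorem immediately preceding it, using the fact that $\theta_n$ is a bijection from $S_n(132)$ onto $\mathcal{B}_n$. The only conceptual input is the observation that a nonnegative integer statistic vanishes precisely when the set of objects it counts is empty; combined with the combinatorial interpretations supplied by the theorem, this lets me transport each avoidance condition across the bijection.

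For part (1), I would first invoke part (1) of the preceding theorem, which asserts that $\mmp^{(k,0,0,0)}(\sg)$ equals the number of nodes $\eta$ in $\theta_n(\sg)$ having at least $k$ left edges on the path from $\eta$ to the root. Since this is a count of nodes, $\mmp^{(k,0,0,0)}(\sg) = 0$ holds if and only if $\theta_n(\sg)$ contains no such node. Because $\theta_n \colon S_n(132) \to \mathcal{B}_n$ is a bijection, it therefore restricts to a bijection between the set $\{\sg \in S_n(132) : \mmp^{(k,0,0,0)}(\sg) = 0\}$ and the set of trees $T \in \mathcal{B}_n$ with no node $\eta$ satisfying the stated left-edge condition. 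Equal cardinality is then immediate.

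Part (2) is handled identically, this time using part (2) of the preceding theorem: $\mmp^{(0,0,k,0)}(\sg)$ is the number of nodes $\eta$ in $\theta_n(\sg)$ whose left subtree has size at least $k$, so this count is zero exactly when no node of $\theta_n(\sg)$ has a left subtree of size $\geq k$. Restricting the bijection $\theta_n$ to the permutations with $\mmp^{(0,0,k,0)}(\sg)=0$ yields the desired equality of counts.

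I do not anticipate any genuine obstacle here, since the corollary is a formal consequence of the theorem together with the bijectivity of $\theta_n$. The only point worth checking carefully is that $\theta_n$ is truly a bijection rather than merely a well-defined map, but this is already supplied: the map is defined inductively, each step is reversible (the root's left and right subtrees recover $\red[\sg_1\cdots\sg_{i-1}]$ and $\sg_{i+1}\cdots\sg_n$, and hence the position $i$ of $n$ and the full permutation), and both sides are counted by $C_n$. Thus the entire argument amounts to noting that ``statistic $=0$'' translates, under $\theta_n$, into ``tree avoids the corresponding feature.''
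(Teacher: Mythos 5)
Your proposal is correct and matches the paper exactly: the paper states this corollary as an immediate consequence of the preceding theorem together with the bijectivity of $\theta_n$, offering no further argument, and your translation of ``statistic $=0$'' into ``the tree avoids the corresponding feature'' is precisely the intended reading. Your extra check that $\theta_n$ is invertible (recovering the position of $n$ from the root's subtrees) is a sound, if implicit in the paper, justification.
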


\section{The function $Q_{132}^{(k,0,0,0)}(t,x)$}

In this section, we shall study the generating function 
$Q_{132}^{(k,0,0,0)}(t,x)$ for $k \geq 0$. 

Throughout this paper, we shall classify the $132$-avoiding permutations 
$\sg = \sg_1 \cdots \sg_n$ by the position of $n$ 
in $\sg$. Let 
$S^{(i)}_n(132)$ denote the set of $\sg \in S_n(132)$ such 
that $\sg_i =n$. 

Clearly the graph $G(\sg)$ of each $\sg \in  S_n^{(i)}(132)$ has the structure 
pictured in Figure \ref{fig:basic2}. That is, in $G(\sg)$, the elements to the left of $n$, $A_i(\sg)$, have 
the structure of a $132$-avoiding permutation, the elements 
to the right of $n$, $B_i(\sg)$, have the structure of a 
$132$-avoiding permutation, and all the elements in 
$A_i(\sg)$ lie above all the elements in 
$B_i(\sg)$.  As mentioned above, 
$|S_n(132)|= C_n = \frac{1}{n+1} \binom{2n}{n}$. The generating 
function for these numbers is given by 
\begin{equation}\label{Catalan}
C(t) = \sum_{n \geq 0} C_n t^n = \frac{1-\sqrt{1-4t}}{2t}=
\frac{2}{1+\sqrt{1-4t}}.
\end{equation}

\fig{basic2}{The structure of $132$-avoiding permutations.}

Clearly, 
\begin{equation*}
Q_{132}^{(0,0,0,0)}(t,x) = \sum_{n \geq 0} C_n x^nt^n =  C(xt) = 
\frac{1-\sqrt{1-4xt}}{2xt}.
\end{equation*}

Next we consider $Q_{132}^{(k,0,0,0)}(t,x)$ for $k \geq 1$. It is easy to see that $A_i(\sg)$ will contribute $\mmp^{(k-1,0,0,0)}(\red[A_i(\sg)])$ to $\mmp^{(k,0,0,0)}(\sg)$, since each of the elements 
to the left of $n$ will match the pattern $MMP(k,0,0,0)$ in $\sg$ if 
and only if it matches 
the pattern $MMP(k-1,0,0,0)$ in the graph of $A_i(\sg)$. 
Similarly, $B_i(\sg)$ will contribute $\mmp^{(k,0,0,0)}(\red[B_i(\sg)])$ to $\mmp^{(k,0,0,0)}(\sg)$
because the elements to the left of $B_i(\sg)$ have no effect on whether an element in $B_i(\sg)$ 
matches the pattern $MMP(k,0,0,0)$ in $\sg$. It follows that 
\begin{equation}\label{132-k000rec}
Q_{n,132}^{(k,0,0,0)}(x) = \sum_{i=1}^n  Q_{i-1,132}^{(k-1,0,0,0)}(x) Q_{n-i,132}^{(k,0,0,0)}(x).
\end{equation}
Multiplying both sides of (\ref{132-k000rec}) by $t^n$ and summing 
for $n \geq 1$, we see   
that 
\begin{equation*}\label{132-k000rec2}
-1+Q_{132}^{(k,0,0,0)}(t,x) = tQ_{132}^{(k-1,0,0,0)}(t,x)\ Q_{132}^{(k,0,0,0)}(t,x).
\end{equation*}
Hence for $k \geq 1$, 
\begin{equation*}
Q_{132}^{(k,0,0,0)}(t,x) = \frac{1}{1-tQ_{132}^{(k-1,0,0,0)}(t,x)}.
\end{equation*}
Thus, we have the following theorem. 

\begin{theorem}\label{thm:Qk000}
\begin{equation}\label{eq:Q0000}
Q_{132}^{(0,0,0,0)}(t,x) =  C(xt) = \frac{1-\sqrt{1-4xt}}{2xt}
\end{equation}
and, for $k \geq 1$, 
\begin{equation}\label{Qk000}
Q_{132}^{(k,0,0,0)}(t,x) = \frac{1}{1-tQ_{132}^{(k-1,0,0,0)}(t,x)}.
\end{equation}
\end{theorem}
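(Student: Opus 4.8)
The plan is to prove the two identities separately: the base case by a direct counting observation, and the inductive identity by the ``position of the largest entry'' decomposition of $132$-avoiding permutations, after which a polynomial recurrence is converted into a functional equation. Both ingredients are, in fact, assembled in the paragraphs immediately preceding the statement, so the proof reduces to recording the base case and then solving the recurrence (\ref{132-k000rec2}); the organization below makes each step explicit.

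For (\ref{eq:Q0000}), I would first observe that the pattern $MMP(0,0,0,0)$ imposes no condition on any quadrant, so \emph{every} entry of every permutation matches it. Hence $\mmp^{(0,0,0,0)}(\sg) = n$ for all $\sg \in S_n$, and in particular for all $\sg \in S_n(132)$. Therefore $Q_{n,132}^{(0,0,0,0)}(x) = \sum_{\sg \in S_n(132)} x^n = C_n x^n$, using $|S_n(132)| = C_n$. Summing against $t^n$ and invoking (\ref{Catalan}) gives $Q_{132}^{(0,0,0,0)}(t,x) = \sum_{n \ge 0} C_n (xt)^n = C(xt)$, which is the claimed closed form.

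For (\ref{Qk000}) with $k \ge 1$, I would fix $\sg \in S_n^{(i)}(132)$, so that $\sg_i = n$, and use the structure recalled in Figure \ref{fig:basic2}: the entries to the left of $n$ form a $132$-avoiding block $A_i(\sg)$, those to the right form a $132$-avoiding block $B_i(\sg)$, and every entry of $A_i(\sg)$ exceeds every entry of $B_i(\sg)$. The key bookkeeping is the behavior of the quadrant-I count under this decomposition. For an entry in $A_i(\sg)$, the only points lying to its right and above it are the corresponding points inside $A_i(\sg)$ together with the single entry $n$; since all of $B_i(\sg)$ lies below it, $B_i(\sg)$ contributes nothing, so the entry matches $MMP(k,0,0,0)$ in $\sg$ exactly when it matches $MMP(k-1,0,0,0)$ in $\red[A_i(\sg)]$. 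For an entry in $B_i(\sg)$, everything to its right lies inside $B_i(\sg)$, so it matches $MMP(k,0,0,0)$ in $\sg$ exactly when it does so in $\red[B_i(\sg)]$. Finally $n$ itself has no point to its right and above, so it never matches for $k \ge 1$. This yields the additive splitting of the statistic, and hence the convolution (\ref{132-k000rec}).

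To finish, I would multiply (\ref{132-k000rec}) by $t^n$ and sum over $n \ge 1$: the left side becomes $Q_{132}^{(k,0,0,0)}(t,x) - 1$, while the convolution on the right, with the factor $t$ accounting for the position occupied by $n$, becomes $t\, Q_{132}^{(k-1,0,0,0)}(t,x)\, Q_{132}^{(k,0,0,0)}(t,x)$; this is exactly (\ref{132-k000rec2}). Solving the resulting linear equation for $Q_{132}^{(k,0,0,0)}(t,x)$ gives the stated recursion. The one genuinely delicate point is the index shift $k \mapsto k-1$ on the left block: it relies crucially on all of $A_i(\sg)$ sitting above all of $B_i(\sg)$, which is what makes $n$ the unique ``extra'' quadrant-I point for each entry of $A_i(\sg)$ and makes the two blocks invisible to each other's quadrant counts. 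Everything else is routine generating-function algebra.
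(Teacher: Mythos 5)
Your proposal is correct and coincides with the paper's own derivation: the paper also obtains (\ref{eq:Q0000}) from the observation that every entry of every $\sg \in S_n(132)$ matches $MMP(0,0,0,0)$, so $Q_{n,132}^{(0,0,0,0)}(x)=C_nx^n$, and proves (\ref{Qk000}) via the same decomposition by the position of $n$, the same shift from $MMP(k,0,0,0)$ in $\sg$ to $MMP(k-1,0,0,0)$ in $\red[A_i(\sg)]$, the same convolution (\ref{132-k000rec}), and the same passage to the functional equation (\ref{132-k000rec2}). There is no substantive difference between your argument and the paper's, beyond your making the bookkeeping for the three kinds of entries ($A_i(\sg)$, $B_i(\sg)$, and $n$ itself) slightly more explicit.
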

Theorem \ref{thm:Qk000} immediately implies the following corollary. 
\begin{corollary}\label{cor:Qk000(0)}
\begin{equation}\label{eq:Q100(0)}
Q_{132}^{(1,0,0,0)}(t,0) = \frac{1}{1-t}
\end{equation}
and, for $k \geq 2$, 
\begin{equation}\label{x=0Qk000}
Q_{132}^{(k,0,0,0)}(t,0) = \frac{1}{1-tQ_{132}^{(k-1,0,0,0)}(t,0)}.
\end{equation}
\end{corollary}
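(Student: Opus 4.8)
The plan is to obtain both identities purely by substituting $x=0$ into the formal power series identities of Theorem \ref{thm:Qk000}; no new combinatorial input is required. The one point that needs justification is the legitimacy of this substitution. Regarding each $Q_{132}^{(k,0,0,0)}(t,x)$ as an element of $\mathbb{Z}[x][[t]]$, its $t^0$-coefficient is $1$ by \eqref{Rabcd}, so $t\,Q_{132}^{(k-1,0,0,0)}(t,x)$ has vanishing constant term and $1-t\,Q_{132}^{(k-1,0,0,0)}(t,x)$ is a unit; hence the right-hand side of \eqref{Qk000} is a genuine element of $\mathbb{Z}[x][[t]]$. First I would observe that the evaluation $x \mapsto 0$ is a ring homomorphism $\mathbb{Z}[x][[t]] \to \mathbb{Z}[[t]]$, and since ring homomorphisms carry units to units and preserve inverses, it may be applied term-by-term to \eqref{Qk000}.

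Carrying out the substitution in \eqref{Qk000}, which holds for every $k \geq 1$, immediately gives
\[
Q_{132}^{(k,0,0,0)}(t,0) = \frac{1}{1 - t\,Q_{132}^{(k-1,0,0,0)}(t,0)} \qquad (k \geq 1).
\]
Restricting to $k \geq 2$ is exactly \eqref{x=0Qk000}, so the recurrence part of the corollary is done.

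For the base case I would evaluate the $k=1$ instance of this recurrence explicitly. From \eqref{eq:Q0000}, $Q_{132}^{(0,0,0,0)}(t,x) = C(xt)$, and setting $x=0$ leaves only the constant term $C(0) = C_0 = 1$. Substituting this into the $k=1$ case yields
\[
Q_{132}^{(1,0,0,0)}(t,0) = \frac{1}{1 - t \cdot 1} = \frac{1}{1-t},
\]
which is \eqref{eq:Q100(0)}.

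There is no real obstacle to this argument; its entire content is the validity of the substitution $x=0$, settled by the constant-term observation above. The reason \eqref{eq:Q100(0)} is singled out from the uniform recurrence is simply that $k=1$ is the one level at which the input $Q_{132}^{(0,0,0,0)}(t,0)$ collapses to the constant $1$, producing a closed form; for $k \geq 2$ the corollary records only the recurrence \eqref{x=0Qk000}, with the explicit values to be unwound from it.
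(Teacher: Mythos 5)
Your proposal is correct and takes the same route as the paper, which gives no explicit proof but simply asserts that Theorem \ref{thm:Qk000} ``immediately implies'' the corollary by setting $x=0$ in \eqref{eq:Q0000} and \eqref{Qk000}. Your additional care --- noting that $1-tQ_{132}^{(k-1,0,0,0)}(t,x)$ is a unit in $\mathbb{Z}[x][[t]]$ so the evaluation $x\mapsto 0$ respects the inverse, and that $Q_{132}^{(0,0,0,0)}(t,0)=C_0=1$ yields the closed form $\frac{1}{1-t}$ at $k=1$ --- is exactly the routine verification the paper leaves implicit.
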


\subsection{Explicit formulas for  $Q^{(k,0,0,0)}_{n,132}(x)|_{x^r}$}

First we shall consider the problem of computing 
$Q^{(k,0,0,0)}_{n,132}(x)|_{x^0}$. That is, 
we shall be interested in the generating functions 
$Q_{132}^{(k,0,0,0)}(t,0)$.
Using Corollary \ref{cor:Qk000(0)}, one can easily compute that 
\begin{eqnarray*}
Q_{132}^{(2,0,0,0)}(t,0)&=&\frac{1-t}{1-2t},\\
Q_{132}^{(3,0,0,0)}(t,0)&=&\frac{1-2t}{1-3t+t^2},\\
Q_{132}^{(4,0,0,0)}(t,0)&=&\frac{1-3t+t^2}{1-4t+3t^2},\\
Q_{132}^{(5,0,0,0)}(t,0)&=&\frac{1-4t+3t^2}{1-5t+6t^2-t^3},\\
Q_{132}^{(6,0,0,0)}(t,0)&=&\frac{1-5t+6t^2-t^3}{1-6t+10t^2-4t^3}, \mbox{and}\\
Q_{132}^{(7,0,0,0)}(t,0)&=&\frac{1-6t+10t^3-4t^3}{1-7t+15t^2-10t^3+t^4}.
\end{eqnarray*}

By Corollary \ref{Dyckpaths}, $Q_{132}^{(k,0,0,0)}(t,0)$ is 
also the generating 
function for the number of 
Dyck paths whose maximum height is less than or equal to 
$k$.  For example, this interpretation is given  to sequence A080937 
in the OEIS, which is the sequence $(Q_{n,132}^{(5,0,0,0)}(0))_{n \geq 0}$, 
and to sequence A080938 in the OEIS, which is the sequence $(Q_{n,132}^{(7,0,0,0)}(0))_{n \geq 0}$.  However, similar interpretations are not given to 
$(Q_{n,132}^{(k,0,0,0)}(0))_{n \geq 0}$, where $k \notin \{5,7\}$. 
For example, such an interpretation is not found for 
$(Q_{n,132}^{(2,0,0,0)}(0))_{n \geq 0}$, $(Q_{n,132}^{(3,0,0,0)}(0))_{n \geq 0}$, $(Q_{n,132}^{(4,0,0,0)}(0))_{n \geq 0}$, or $(Q_{n,132}^{(6,0,0,0)}(0))_{n \geq 0}$, which are sequences A011782, A001519, A124302, and A024175 in the OEIS, respectively. 
Similarly, by Corollary~\ref{trees}, the 
generating function $Q_{132}^{(k,0,0,0)}(t,0)$ is the generating 
function for the number of rooted binary trees $T$ that have no nodes 
$\eta$ such that there are $\geq k$ left edges on the path from 
$\eta$ to the root of $T$.

We can easily compute the first few terms of $Q_{132}^{(k,0,0,0)}(t,x)$ 
for small $k$  
using Mathematica.  For example, we have computed the 
following. 

\begin{eqnarray*}
&&Q_{132}^{(1,0,0,0)}(t,x) = 1+t+(1+x) t^2+\left(1+2 x+2 x^2\right) t^3+
\left(1+3 x+5 x^2+5 x^3\right) t^4\\
&&\left(1+4 x+9 x^2+14 x^3+14 x^4\right) t^5+
\left(1+5 x+14 x^2+28 x^3+42 x^4+42 x^5\right) t^6+\\
&&\left(1+6 x+20 x^2+48 x^3+90 x^4+132 x^5+132 x^6\right) t^7+\\
&&\left(1+7 x+27 x^2+75 x^3+165 x^4+297 x^5+429 x^6+429 x^7\right) t^8+\\
&&\left(1+8 x+35 x^2+110 x^3+275 x^4+572 x^5+1001 x^6+1430 x^7+1430 x^8\right) t^9+ \cdots. 
\end{eqnarray*}

In this case, it is quite easy to explain some of the coefficients 
that appear in the polynomials $Q_{n,132}^{(1,0,0,0)}(x)$. Some of these explanations are given in the following theorem.
\begin{theorem}
\begin{enumerate}
\item $Q_{n,132}^{(1,0,0,0)}(0)=1$ for $n \geq 1$, 
\item $Q_{n,132}^{(1,0,0,0)}(x)|_x=n-1$ for $n \geq 2$, 
\item $Q_{n,132}^{(1,0,0,0)}(x)|_{x^2}=\binom{n}{2}-1$ for $n \geq 3$, 
\item $Q_{n,132}^{(1,0,0,0)}(x)|_{x^{n-1}}=C_{n-1}$ for $n \geq 1$, and 
\item $Q_{n,132}^{(1,0,0,0)}(x)|_{x^{n-2}}=C_{n-1}$ for $n \geq 2$.
\end{enumerate}
\end{theorem}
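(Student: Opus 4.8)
The plan is to extract coefficients from a single recursion. Specializing (\ref{132-k000rec}) to $k=1$ and using that every element matches $MMP(0,0,0,0)$, so that $Q_{m,132}^{(0,0,0,0)}(x) = C_m x^m$, we obtain
\begin{equation}\label{onezerorec}
Q_{n,132}^{(1,0,0,0)}(x) = \sum_{i=1}^{n} C_{i-1}\, x^{i-1}\, Q_{n-i,132}^{(1,0,0,0)}(x),
\end{equation}
valid for $n \ge 1$, with $Q_{0,132}^{(1,0,0,0)}(x) = 1$. It is worth keeping in mind the combinatorial reading: $\sg_i$ matches $MMP(1,0,0,0)$ exactly when it is not a right-to-left maximum, so $\mmp^{(1,0,0,0)}(\sg)$ equals $n$ minus the number of right-to-left maxima of $\sg$. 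All five parts will follow by reading off coefficients of (\ref{onezerorec}).

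For the low-order coefficients (parts 1--3) I would induct on $n$, noting that in (\ref{onezerorec}) the factor $x^{i-1}$ means only finitely many values of $i$ can contribute to a fixed power of $x$. Extracting $x^0$ leaves only the $i=1$ term, giving $Q_{n,132}^{(1,0,0,0)}(0) = Q_{n-1,132}^{(1,0,0,0)}(0)$ and hence $1$ for all $n$ (equivalently, this is immediate from $Q_{132}^{(1,0,0,0)}(t,0)=1/(1-t)$ in Corollary \ref{cor:Qk000(0)}, and combinatorially the only $\sg$ with no non-right-to-left-maximum is the identity). Extracting $x^1$ keeps the $i=1$ and $i=2$ terms, yielding $Q_{n,132}^{(1,0,0,0)}(x)|_{x} = Q_{n-1,132}^{(1,0,0,0)}(x)|_{x} + 1$, which telescopes to $n-1$. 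Extracting $x^2$ keeps $i \in \{1,2,3\}$ and gives $Q_{n,132}^{(1,0,0,0)}(x)|_{x^2} = Q_{n-1,132}^{(1,0,0,0)}(x)|_{x^2} + Q_{n-2,132}^{(1,0,0,0)}(x)|_{x} + 2$; substituting part 2 turns this into the recurrence $\binom{n}{2}-1 = (\binom{n-1}{2}-1) + (n-1)$, closing the induction from the base case $n=3$.

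For the two top coefficients I would first record the degree. An induction on (\ref{onezerorec}) shows that $\deg Q_{n,132}^{(1,0,0,0)}(x) = n-1$ with leading coefficient $C_{n-1}$: every term with $i<n$ contributes degree $(i-1) + (n-i-1) = n-2$, while only the $i=n$ term $C_{n-1}x^{n-1}$ reaches degree $n-1$. This proves part 4 (and matches the combinatorial count: $\mmp^{(1,0,0,0)}(\sg)=n-1$ forces a unique right-to-left maximum, i.e. $\sg_n=n$, leaving a free $132$-avoider on $n-1$ letters). For part 5, extracting $x^{n-2}$ from (\ref{onezerorec}) picks out, in term $i$, the coefficient of $x^{n-i-1}$ in $Q_{n-i,132}^{(1,0,0,0)}(x)$, i.e. its leading coefficient $C_{n-i-1}$ for $1 \le i \le n-1$ (the $i=n$ term contributes nothing for $n \ge 2$). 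Thus $Q_{n,132}^{(1,0,0,0)}(x)|_{x^{n-2}} = \sum_{i=1}^{n-1} C_{i-1} C_{n-1-i} = \sum_{j=0}^{n-2} C_j C_{n-2-j} = C_{n-1}$ by the Catalan recurrence.

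The routine work is the bookkeeping of which indices $i$ survive each coefficient extraction and the handling of small-$n$ base cases (for instance, $Q_{n-2,132}^{(1,0,0,0)}(x)|_{x}=n-3$ is only valid once $n\ge 4$, so part 3 must be based at $n=3$). The one genuinely substantive step is recognizing that the convolution in part 5 is exactly the Catalan recurrence $C_{n-1}=\sum_{j=0}^{n-2}C_jC_{n-2-j}$; once the leading-coefficient claim is in place this is immediate, so I expect no real obstacle beyond careful accounting.
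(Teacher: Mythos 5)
Your proof is correct, but it takes a genuinely different route from the paper's. The paper argues combinatorially, part by part: it identifies the extremal permutations directly (the decreasing permutation for part 1, the permutations $n(n-1)\cdots(i+1)(i-1)i(i-2)\cdots 21$ for part 2, the permutations ending in $n$ for part 4), proves part 3 by a case analysis on the position of $n$ yielding $a(n)=a(n-1)+n-1$, and proves part 5 by a bijection: if $\mmp^{(1,0,0,0)}(\sg)=n-2$ then $\sg_n$ is forced to equal $n-i$ where $\sg_i=n$, so deleting $\sg_n$ is reversible and gives a bijection onto $S_{n-1}(132)$. You instead extract all five coefficients uniformly from the single convolution recursion (\ref{132-k000rec}) at $k=1$, using $Q_{m,132}^{(0,0,0,0)}(x)=C_m x^m$; your recurrences for parts 1--3 are the algebraic shadows of the paper's case analysis (your part 3 recurrence is literally the paper's $a(n)=a(n-1)+n-1$), your degree argument for part 4 substitutes for the paper's direct identification, and for part 5 you replace the paper's bijection with the Catalan convolution $\sum_{j=0}^{n-2}C_jC_{n-2-j}=C_{n-1}$. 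What the paper's approach buys is combinatorial insight (explicit bijections and explicit lists of the extremal permutations); what yours buys is uniformity and economy, since one recursion with careful bookkeeping of which indices $i$ survive each extraction handles everything, and it scales more readily to analogous statements for other coefficients. One small slip in an aside: you say the only permutation with every element a right-to-left maximum is ``the identity''; it is the \emph{decreasing} permutation $n(n-1)\cdots 21$, as the paper states. This does not affect your argument, since your actual derivation of part 1 goes through the recursion (or through $Q_{132}^{(1,0,0,0)}(t,0)=1/(1-t)$) rather than through this remark.
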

\begin{proof}
There is only one permutation $\sg \in S_n$ with 
$\mmp^{(1,0,0,0)}(\sg) =0$, namely, $\sg = n(n-1) \cdots 1$.  Thus, 
the constant term in $Q_{n,132}^{(1,0,0,0)}(x)$ is always 1. 
Also the only way to get a permutation $\sg \in S_n$ that has  $\mmp^{(1,0,0,0)}(\sg) =n-1$ is to have $\sg_n =n$. It follows 
that the coefficient of $x^{n-1}$ in  $Q_{n,132}^{(1,0,0,0)}(x)$ 
is the number of permutations $\sg \in S_n(132)$ 
such that $\sg_n =n$, which is clearly $C_{n-1}$.  It is also 
easy to see that the only permutations 
$\sg \in S_n(132)$ with $\mmp^{(1,0,0,0)}(\sg) =1$ are the permutations 
of the form 
$$\sg = n(n-1)\cdots (i+1)(i-1)i (i-2) \cdots 2 1.$$  
Thus, the coefficient 
of $x$ in $Q_{n,132}^{(1,0,0,0)}(x)$ is always $n-1$.

For (3), note  that we have 
$Q_{3,132}^{(1,0,0,0)}(x)|_{x^2} =2 = \binom{3}{2}-1$.
For $n \geq 4$, let $a(n)$ denote the coefficient 
of $x^2$ in $Q_{n,132}^{(1,0,0,0)}(x)$. The permutations 
$\sg \in S_n(132)$ such that 
$\mmp^{(1,0,0,0)}(\sg) =2$ must have either 
$\sg_1 =n$, $\sg_2=n$, or $\sg_3=n$. If $\sg_3 =n$, it must be the case that $\{\sg_1,\sg_2\} = \{n-1,n-2\}$ 
and that $\mmp^{(1,0,0,0)}(\sg_4 \cdots \sg_n) =0$. Thus, $\sg_4 \cdots \sg_n$ 
must be decreasing, so there are exactly two permutations 
$\sg \in S_n(132)$ such that 
$\sg_3 =n$ and $\mmp^{(1,0,0,0)}(\sg) =2$.  If $\sg_2 =n$, it must be the case that $\sg_1=n-1$ 
and that $\mmp^{(1,0,0,0)}(\sg_3 \cdots \sg_n) =1$. In that case, 
we know that there are $n-3$ choices for $\sg_3 \cdots \sg_n$, so 
there are $n-3$ permutations 
$\sg \in S_n(132)$ such that 
$\sg_2 =n$ and $\mmp^{(1,0,0,0)}(\sg) =2$. Finally, it is clear 
that if $\sg_1 = n$, then we must have that $\mmp^{(1,0,0,0)}(\sg_2 \cdots \sg_n) =2$, so there are $a(n-1)$ permutations 
$\sg \in S_n(132)$ such that 
$\sg_1 =n$ and $\mmp^{(1,0,0,0)}(\sg) =2$. Thus, we have shown 
that $a(n) = a(n-1) + n-1$ from which it easily follows by induction 
that $a(n) = \binom{n}{2}-1$.

Finally, for (5), let $\sigma = \sg_1 \cdots \sg_n \in S_n(132)$ be such that $\mmp^{(1,0,0,0)}(\sigma) = n-2$. We clearly cannot have $\sigma_n = n$, so $n$ and $\sigma_n$ must be the two elements of $\sg$  that do not match the pattern $MMP(1,0,0,0)$ in $\sg$. Now if $\sg_i =n$, then $B_i(\sg)$ consists of 
the elements $1, \ldots, n-i$. But then it 
must be the case that  $\sigma_n =n-i$.  Note that this implies 
that $\sg_n$ can be removed from $\sigma$ in a completely reversible way. That is, $\sigma \to \red[\sg_1 \cdots \sg_{n-1}]$ is a bijection 
onto $S_{n-1}(132)$. Hence there are $C_{n-1}$ such $\sigma$.
\end{proof}

We have computed that 
\begin{eqnarray*}
&&Q_{132}^{(2,0,0,0)}(t,x) = 1+t+2 t^2+(4+x)t^3+\left(8+4 x+2 x^2\right) t^4+\\ &&\left(16+12 x+9 x^2+5 x^3\right) t^5+\left(32+32 x+30 x^2+24 x^3+14 x^4\right) t^6+\\
&&\left(64+80 x+88 x^2+85 x^3+70 x^4+42 x^5\right) t^7+\\
&&\left(128+192 x+240 x^2+264 x^3+258 x^4+216 x^5+132 x^6\right) t^8+\\
&&\left(256+448 x+624 x^2+760 x^3+833 x^4+819 x^5+693 x^6+429 x^7\right) 
t^9+ \cdots. 
\end{eqnarray*}

Again it is easy to explain some of these coefficients.  That is, we have 
the following theorem.
\begin{theorem}
\begin{enumerate}
\item $Q_{n,132}^{(2,0,0,0)}(0) =2^{n-1}$ if $n \geq 3$, 
\item for $n \geq 3$, 
the highest power of $x$ that appears in $Q_{n,132}^{(2,0,0,0)}(x)$ is 
$x^{n-2}$, with \\
$Q_{n,132}^{(2,0,0,0)}(x)|_{x^{n-2}} =C_{n-2}$,  and 
\item $Q_{n,132}^{(2,0,0,0)}(x)|_{x} =(n-2)2^{n-3}$ for $n \geq 3$. 
\end{enumerate}
\end{theorem}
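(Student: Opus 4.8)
The plan is to handle the three parts separately, leaning on the product recursion (\ref{132-k000rec}), its generating-function form (\ref{Qk000}), and the facts about $\mmp^{(1,0,0,0)}$ already established for $Q^{(1,0,0,0)}_{n,132}$. Throughout I will use the block decomposition of a $132$-avoider around the position of $n$ (Figure \ref{fig:basic2}) and the contribution analysis recorded just before (\ref{132-k000rec}): if $\sigma_i = n$, then $\mmp^{(2,0,0,0)}(\sigma) = \mmp^{(1,0,0,0)}(\red[\sigma_1\cdots\sigma_{i-1}]) + \mmp^{(2,0,0,0)}(\red[\sigma_{i+1}\cdots\sigma_n])$.

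Part (1) is immediate from material already in hand. As noted right before the statement, substituting $Q_{132}^{(1,0,0,0)}(t,0) = \tfrac{1}{1-t}$ into (\ref{x=0Qk000}) (Corollary \ref{cor:Qk000(0)}) gives $Q_{132}^{(2,0,0,0)}(t,0) = \tfrac{1-t}{1-2t}$. I would simply expand $\tfrac{1-t}{1-2t} = 1 + \sum_{n\ge 1} 2^{n-1} t^n$ and read off $Q_{n,132}^{(2,0,0,0)}(0) = 2^{n-1}$.

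Part (2) I would do combinatorially. First the crude bound: for any $\sigma \in S_n$, positions $n-1$ and $n$ have at most one, respectively zero, larger entries to their right, so they never match $MMP(2,0,0,0)$; hence $\mmp^{(2,0,0,0)}(\sigma) \le n-2$ and the top possible power is $x^{n-2}$. For the coefficient I claim equality forces $\sigma_n = n$. Indeed, achieving $n-2$ requires every one of positions $1,\dots,n-2$ to match; if $n$ occupied a position $\le n-2$ that position would fail (nothing exceeds $n$), and if $\sigma_{n-1}=n$ the block structure of Figure \ref{fig:basic2} forces $\sigma_n = 1$, so position $n-2$ would see only the single larger entry $n$ and fail. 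With $\sigma_n = n$ the contribution identity collapses to $\mmp^{(2,0,0,0)}(\sigma) = \mmp^{(1,0,0,0)}(\red[\sigma_1\cdots\sigma_{n-1}])$, so the permutations attaining $n-2$ biject with the $\tau \in S_{n-1}(132)$ having $\mmp^{(1,0,0,0)}(\tau) = (n-1)-1$. By part (4) of the theorem on $Q^{(1,0,0,0)}_{n,132}$ there are $C_{n-1-1} = C_{n-2}$ of these.

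Part (3) I would get by a one-variable power-series extraction. Writing $F = Q_{132}^{(1,0,0,0)}(t,x)$ and $G = Q_{132}^{(2,0,0,0)}(t,x)$, relation (\ref{Qk000}) reads $G = (1-tF)^{-1}$; differentiating in $x$ at $x=0$ gives $[x^1]G = t\,G_0^2\,[x^1]F$, with $G_0 = G|_{x=0} = \tfrac{1-t}{1-2t}$ from Part (1). By part (2) of the theorem on $Q^{(1,0,0,0)}_{n,132}$, $[x^1]F = \sum_{n\ge 2}(n-1)t^n = \tfrac{t^2}{(1-t)^2}$, so the $(1-t)^2$ factors cancel and $[x^1]G = \tfrac{t^3}{(1-2t)^2}$; expanding $\tfrac{1}{(1-2t)^2} = \sum_{k\ge 0}(k+1)2^k t^k$ then yields $Q_{n,132}^{(2,0,0,0)}(x)|_x = (n-2)2^{n-3}$ for $n \ge 3$. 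The main obstacle is the forcing step in Part (2): ruling out $\sigma_{n-1}=n$ and showing $n$ must sit in the last slot is exactly where $132$-avoidance is essential and must be invoked through the block picture of Figure \ref{fig:basic2}; Parts (1) and (3) are then routine once $G=(1-tF)^{-1}$ and the linear coefficient of $Q_{132}^{(1,0,0,0)}$ are available.
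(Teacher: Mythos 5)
Your proposal is correct and follows essentially the same route as the paper: part (1) by expanding $Q_{132}^{(2,0,0,0)}(t,0)=\frac{1-t}{1-2t}$, part (2) by the same forcing argument that equality $\mmp^{(2,0,0,0)}(\sg)=n-2$ requires $\sg_n=n$ (your block-structure elimination of $\sg_{n-1}=n$ is just a fleshed-out version of the paper's terse ``$\sg_{n-1}=n$, $\sg_n=n-1$ creates a 132'' remark, and both then reduce to the coefficient $C_{n-2}$ from part (4) of the $Q^{(1,0,0,0)}$ theorem), and part (3) by extracting $[x^1]$ from $G=(1-tF)^{-1}$, where your direct $x$-differentiation identity $[x^1]G = t\,G_0^2\,[x^1]F$ is exactly the content of the paper's recursion (\ref{xrec3}) since $\sum_{n\geq 1} n t^n F_0^{n-1} = t(1-tF_0)^{-2} = tG_0^2$, yielding the same $\frac{t^3}{(1-2t)^2}$. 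The only difference is presentational: the paper additionally supplies a bijective proof of (1) and an inductive combinatorial proof of (3), which your generating-function treatment renders unnecessary.
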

\begin{proof}
It is easy 
to see that the only $\sg \in S_n(132)$ that
have $\mmp^{(2,0,0,0)}(\sg) = n-2$ must have $\sg_{n-1} = n-1$ and 
$\sg_n =n$. Note that if $\sg_{n-1}=n$ and $\sg_{n}=n-1$ then we have an occurrence of $132$ for $n\geq 3$. Thus, the coefficient of 
$x^{n-2}$ in $Q_{n,132}^{(2,0,0,0)}(x)$ is $C_{n-2}$ if $n \geq 3$.   

The fact that $Q_{n,132}^{(2,0,0,0)}(0) =2^{n-1}$ for $n \geq 1$  
is an immediate consequence of the fact that 
$Q_{132}^{(2,0,0,0)}(t,0)= \frac{1-t}{1-2t}$. 
In fact, this is a known result, since avoidance of the pattern $MMP(2,0,0,0)$ is equivalent to avoiding simultaneously the (classical) patterns 132 and 123 (see \cite[p. 224]{kit}). One can 
also give a simple combinatorial proof of this fact. Clearly it is true for $n=1$.
For $n \ge 2$, note that $\sg_1$ must be either $n$ or $n-1$. Also, $\red[\sg_2 \cdots \sg_n]$ must avoid the pattern $MMP(2,0,0,0)$. Since every permutation $\red[\sg_2 \cdots \sg_n]$ avoiding $MMP(2,0,0,0)$ can be obtained in this manner in exactly two ways, once with $\sg_1 = n$ and once with $\sg_n = n-1$, we see that there are $2\cdot 2^{n-2} = 2^{n-1}$ such $\sigma$.

The initial terms of the sequence $(Q_{132}^{(2,0,0,0)}(t,x)|_x)_{n \geq 3}$ 
are 
$$1,4,12,32,80,192,448, \ldots, $$
which are the initial terms of sequence A001787 in OEIS whose $n$-th term 
is $a_n =n2^{n-1}$.
Now $a_n$ has many combinatorial 
interpretations including the number of edges in the 
$n$-dimensional hypercube and the number of permutations 
in $S_{n+2}(132)$ with exactly one occurrence of the pattern 
123. The ordinary generating function of the sequence is 
$\frac{x}{(1-2x)^2}$, which implies that 
\begin{equation*}
Q_{132}^{(2,0,0,0)}(t,x)|_x = \frac{t^3}{(1-2t)^2}.
\end{equation*}

This can be proved in two different ways. That is, for any 
$k \geq 2$, 
\begin{eqnarray}\label{xrec1}
Q_{132}^{(k,0,0,0)}(t,x)|_x &=& 
\left(\frac{1}{1 -t Q_{132}^{(k-1,0,0,0)}(t,x)}\right)\big|_x \nonumber \\
&=& \left( 1+ \sum_{n \geq 1} t^n (Q_{132}^{(k-1,0,0,0)}(t,x))^n \right)\big|_x 
\nonumber \\
&=& \sum_{n \geq 1} n t^n (Q_{132}^{(k-1,0,0,0)}(t,0))^{n-1} 
Q_{132}^{(k-1,0,0,0)}(t,x)|_x \nonumber \\
&=& Q_{132}^{(k-1,0,0,0)}(t,x)|_x \sum_{n \geq 1} n t^n (Q_{132}^{(k-1,0,0,0)}(t,0))^{n-1}.
\end{eqnarray}
However, 
\begin{eqnarray*}
\frac{d}{dt} Q_{132}^{(k,0,0,0)}(t,0) &=&  \frac{d}{dt}
\left( \frac{1}{1 -t Q_{132}^{(k-1,0,0,0)}(t,0)}\right) \\
&=& 
\sum_{n \geq 1} n (t Q_{132}^{(k-1,0,0,0)}(t,0))^{n-1}  
\frac{d}{dt}\left(t Q_{132}^{(k-1,0,0,0)}(t,0)\right),
\end{eqnarray*}
so
\begin{equation}\label{xrec2}
\frac{t\frac{d}{dt} Q_{132}^{(k,0,0,0)}(t,0)}{ \frac{d}{dt}\left(t Q_{132}^{(k-1,0,0,0)}(t,0)\right)} = \sum_{n \geq 1} n t^n (Q_{132}^{(k-1,0,0,0)}(t,0))^{n-1}.
\end{equation}
Combining (\ref{xrec1}) and (\ref{xrec2}), we obtain the 
following recursion. 
\begin{theorem} For $k \geq 1$, 
\begin{equation}\label{xrec3}
Q_{132}^{(k,0,0,0)}(t,x)|_x = Q_{132}^{(k-1,0,0,0)}(t,x)|_x 
\frac{t\frac{d}{dt} Q_{132}^{(k,0,0,0)}(t,0)}{ \frac{d}{dt}\left(t Q_{132}^{(k-1,0,0,0)}(t,0)\right)}.
\end{equation}
\end{theorem}

We know that  
$$ Q_{132}^{(1,0,0,0)}(t,x)|_x = 
\sum_{n \geq 2}(n-1)t^n = \frac{t^2}{(1-t)^2}$$
and 
$$ Q_{132}^{(1,0,0,0)}(t,0) = \frac{1}{1-t} \ \mbox{and} \ Q_{132}^{(2,0,0,0)}(t,0) = \frac{1-t}{1-2t}.$$
Thus, 
\begin{eqnarray}
Q_{132}^{(2,0,0,0)}(t,x)|_x &=& Q_{132}^{(1,0,0,0)}(t,x)|_x 
\frac{t\frac{d}{dt} Q_{132}^{(2,0,0,0)}(t,0)}{ \frac{d}{dt}\left(t Q_{132}^{(1,0,0,0)}(t,0)\right)} \nonumber \\
&=& \frac{t^2}{(1-t)^2} \frac{t \frac{d}{dt} \left( \frac{1-t}{1-2t} \right)}{
\frac{d}{dt}\frac{t}{1-t}} \nonumber \\
&=& \frac{t^3}{(1-2t)^2}. \nonumber 
\end{eqnarray}

We can also give a direct proof of this result. 
That is, we can give a direct proof of the fact that for $n \geq 3$, 
$b(n) = Q_{n,132}^{(2,0,0,0)}(x)|_x = (n-2)2^{n-3}$. 
Note that $b(3) = 1 = (3-2)2^{3-3}$ and 
$b(4)  = (4-2)2^{4-3} = 4$, so our claim 
holds for $n=3,4$. Then let $n \geq 5$ and assume by induction that 
$b(k) = (k-2)2^{k-3}$ for $3 \leq k <n$. Now suppose 
that $\sg \in S_n^{(i)}(132)$ and $\mmp^{(2,0,0,0)}=1$. 
If the element of $\sg$ that matches $MMP(2,0,0,0)$ occurs 
in $A_i(\sg)$, then it must be the case that 
$\mmp^{(1,0,0,0)}(A_i(\sg)) =1$ and $\mmp^{(2,0,0,0)}(B_i(\sg)) =0$. 
By our previous results, we have $(i-2)$ choices for 
$A_i(\sg)$ and $a(n-i) = 2^{n-i-1}$ choices for $B_i(\sg)$. 
Note that this can happen only for 
$3 \leq i \leq n-1$, so such permutations contribute 
$$\sum_{i=3}^{n-1}(i-2)2^{n-i-1} = \sum_{j=1}^{n-3} j 2^{n-3-j} = 
\sum_{k=0}^{n-4} (n-3-k)2^k$$ 
to $b(n)$. 
If the element of $\sg$ that matches $MMP(2,0,0,0)$ occurs 
in $B_i(\sg)$, then we have $\mmp^{(1,0,0,0)}(A_i(\sg)) =0$, which means that $A_i(\sg)$ is decreasing 
and $\mmp^{(2,0,0,0)}(B_i(\sg)) =1$. This can happen only for 
$1 \leq i \leq n-3$. Thus, such permutations will contribute 
$$b(3) + \cdots + b(n-1) = \sum_{i=3}^{n-1} (i-2)2^{(i-3)} =  
\sum_{k=0}^{n-4}  (k+1)2^k$$
to $b(n)$.  The only permutations that we have not accounted for 
are the permutations $\sg = \sg_1 \cdots \sg_n \in S_n(132)$ where 
$\sg_n =n$ and $\mmp^{(1,0,0,0)}(\sg_1 \cdots \sg_{n-1}) =1$, and 
there are $n-2$ such permutations. Thus, 
\begin{eqnarray*}
b(n) &=& (n-2) + \sum_{k=0}^{n-4}2^k(n-3 -k +k+1) \\
&=& (n-2)\left( 1 + \sum_{k=0}^{n-4}2^k\right) \\
&=& (n-2)( 1+ 2^{n-3} -1) = (n-2)2^{n-3}.
\end{eqnarray*}
\end{proof}

We can also explain the coefficient of second highest power of 
$x$ that appears in $Q_{n,132}^{(k,0,0,0)}(x)$ for $k \geq 2$. 

\begin{theorem}\label{2ndhighQk000} 
For all $k \geq 2$ and $n \geq k+2$, 
\begin{equation}
Q_{n,132}^{(k,0,0,0)}(x)|_{x^{n-k-1}} = C_{n-k}+2(k-1)C_{n-k-1}.
\end{equation}
\end{theorem}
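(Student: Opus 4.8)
The plan is to read off the coefficient of $x^{n-k-1}$ directly from the fundamental recursion (\ref{132-k000rec}),
\[
Q_{n,132}^{(k,0,0,0)}(x) = \sum_{i=1}^n Q_{i-1,132}^{(k-1,0,0,0)}(x)\, Q_{n-i,132}^{(k,0,0,0)}(x),
\]
first determining by a degree count which summands can possibly contribute to that coefficient, and then inducting on $k$.

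Before the main computation I would record the degrees and top coefficients of the polynomials involved. Since an entry of a $132$-avoiding permutation in $S_m$ matches $MMP(j,0,0,0)$ only when it has at least $j$ larger entries to its right, at most $m-j$ of the $m$ entries can match, so $\deg Q_{m,132}^{(j,0,0,0)}(x) = m-j$ for $m>j$. Writing $L_j(m):=Q_{m,132}^{(j,0,0,0)}(x)|_{x^{m-j}}$, the same extraction from (\ref{132-k000rec}) shows that only the term $i=m$ can attain degree $m-j$, whence $L_j(m)=L_{j-1}(m-1)$; together with $Q_{m,132}^{(0,0,0,0)}(x)=C_m x^m$, which gives $L_0(m)=C_m$, this unfolds to $L_j(m)=C_{m-j}$ for all $m>j$. (This also follows from Theorem \ref{Thm2}: the relevant Dyck paths are exactly those of the form $U^j P D^j$ with $P$ a lifted path of length $2(m-j)$.) This top-coefficient fact is all I need beyond the $k=1,2$ statements already in the text.

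For the main step, write $d_1=\deg Q_{i-1,132}^{(k-1,0,0,0)}(x)=\max(0,i-k)$ and $d_2=\deg Q_{n-i,132}^{(k,0,0,0)}(x)=\max(0,n-i-k)$; the $i$-th summand contributes to $x^{n-k-1}$ only when $d_1+d_2\ge n-k-1$. Treating $d_1+d_2$ as a piecewise-linear function of $i$, one checks that every $i\notin\{1,\,n-1,\,n\}$ satisfies $d_1+d_2\le n-k-2$, so only $i\in\{1,n-1,n\}$ survive; it is exactly the hypotheses $k\ge2$ and $n\ge k+2$ that force this bound (for instance they rule out the flat interior regime, whose value is $\max(0,n-2k)$, and the short paths that would otherwise intrude at $n=k+1$). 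I expect this degree bookkeeping to be the main obstacle, since one must track the two end-regimes and the interior across the ranges $n\ge2k$ and $k+2\le n\le2k$ separately.

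It then remains to evaluate the three surviving terms. At $i=1$ the factor $Q_{0,132}^{(k-1,0,0,0)}(x)=1$ leaves $Q_{n-1,132}^{(k,0,0,0)}(x)$ read off at its top degree, namely $L_k(n-1)=C_{n-k-1}$; at $i=n-1$ the factor $Q_{1,132}^{(k,0,0,0)}(x)=1$ leaves $Q_{n-2,132}^{(k-1,0,0,0)}(x)$ at its top degree, namely $L_{k-1}(n-2)=C_{n-k-1}$; and at $i=n$ one gets $Q_{n-1,132}^{(k-1,0,0,0)}(x)|_{x^{n-k-1}}$, the same quantity with $k$ lowered by one. Setting $S_k(n):=Q_{n,132}^{(k,0,0,0)}(x)|_{x^{n-k-1}}$, this yields $S_k(n)=2C_{n-k-1}+S_{k-1}(n-1)$ for $k\ge2$ and $n\ge k+2$. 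Since the Catalan index $n-k-1$ is unchanged under $(k,n)\mapsto(k-1,n-1)$, iterating this relation $k-1$ times accumulates $2(k-1)C_{n-k-1}$ and terminates at the base value $S_1(n-k+1)=C_{n-k}$, supplied by the earlier fact that $Q_{m,132}^{(1,0,0,0)}(x)|_{x^{m-2}}=C_{m-1}$. This gives $S_k(n)=C_{n-k}+2(k-1)C_{n-k-1}$, which I would finally cross-check against the displayed expansion of $Q_{132}^{(2,0,0,0)}(t,x)$ in the $k=2$ case.
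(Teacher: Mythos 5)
Your proposal is correct and follows essentially the same route as the paper's proof: extracting the coefficient of $x^{n-k-1}$ from the recursion (\ref{132-k000rec}), showing by degree count that only the summands $i=1$, $i=n-1$, $i=n$ survive, and inducting on $k$ so that the $i=1$ and $i=n-1$ terms each contribute $C_{n-k-1}$ while the $i=n$ term carries the induction. The only differences are cosmetic refinements: you make explicit the top-coefficient lemma $Q_{m,132}^{(j,0,0,0)}(x)|_{x^{m-j}}=C_{m-j}$ (which the paper asserts after verifying it for $j=1,2$) and you anchor the induction at $k=1$ via part (5) of the earlier theorem rather than at the paper's separately computed $k=2$ base case.
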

\begin{proof}
We first consider the case $k=2$.  That is, we must compute 
$Q_{n,132}^{(2,0,0,0)}(x)|_{x^{n-3}}$. 
In this case, 
$$Q_{n,132}^{(2,0,0,0)}(x) = \sum_{i=1}^n Q_{i-1,132}^{(1,0,0,0)}(x)
Q_{n-i,132}^{(2,0,0,0)}(x).$$

We have shown that for $n\geq 1$, the highest power 
of $x$ that occurs in $Q_{n,132}^{(1,0,0,0)}(x)$ is $x^{n-1}$ and, 
for $n \geq 2$, the highest power of $x$ that occurs in 
$Q_{n,132}^{(2,0,0,0)}(x)$ is $x^{n-2}$. It follows 
that for $i=2,\ldots, n-2$, the highest power of 
$x$ which occurs in $Q_{i-1,132}^{(1,0,0,0)}(x)
Q_{n-i,132}^{(2,0,0,0)}(x)$ is less than $n-3$ so that we 
have only three cases to consider. \\
\ \\
{\bf Case 1.} $i=1$. \\
In this case, $Q_{i-1,132}^{(1,0,0,0)}(x) Q_{n-i,132}^{(2,0,0,0)}(x) 
=  Q_{n-1,132}^{(2,0,0,0)}(x)$ and we know that 
$$Q_{n-1,132}^{(2,0,0,0)}(x)|_{x^{n-3}} = C_{n-3} \ \mbox{for } n \geq 4.$$
\ \\
{\bf Case 2.} $i=n-1$. \\
In this case, $Q_{i-1,132}^{(1,0,0,0)}(x) Q_{n-i,132}^{(2,0,0,0)}(x) 
=  Q_{n-2,132}^{(1,0,0,0)}(x)$ and we know that 
$$Q_{n-2,132}^{(2,0,0,0)}(x)|_{x^{n-3}} = C_{n-3} \ \mbox{for } n \geq 4.$$
\ \\
{\bf Case 3.} $i=n$. \\
In this case, $Q_{i-1,132}^{(1,0,0,0)}(x) Q_{n-i,132}^{(2,0,0,0)}(x) 
=  Q_{n-1,132}^{(1,0,0,0)}(x)$ and we know that 
$$Q_{n-1,132}^{(1,0,0,0)}(x)|_{x^{n-3}} = C_{n-2} \ \mbox{for } n \geq 4.$$

Thus for $n \geq 4$, $Q_{n,132}^{(2,0,0,0)}(x)|_{x^{n-3}} = C_{n-2}+2C_{n-3}$.

Now suppose that $k \geq 3$ and we have proved by induction 
that \\
$Q_{n,132}^{(k-1,0,0,0)}(x)|_{x^{n-k}} = C_{n-k+1}+2(k-2)C_{n-k}$ for 
$n \geq k+1$.   
In this case, 
$$Q_{n,132}^{(k,0,0,0)}(x) = \sum_{i=1}^n Q_{i-1,132}^{(k-1,0,0,0)}(x)
Q_{n-i,132}^{(k,0,0,0)}(x).$$

We have shown that for $n\geq k$, the highest power 
of $x$ that occurs in $Q_{n,132}^{(k-1,0,0,0)}(x)$ is $x^{n-k+1}$ and, 
for $n \geq k+1$, the highest power of $x$ that occurs in 
$Q_{n,132}^{(k,0,0,0)}(x)$ is $x^{n-k}$. It is easy to check    
that for $i=2,\ldots, n-2$, the highest power of 
$x$ which occurs in $Q_{i-1,132}^{(1,0,0,0)}(x)
Q_{n-i,132}^{(2,0,0,0)}(x)$ is less that $n-k-1$ so that we 
have only three cases to consider. \\
\ \\
{\bf Case 1.} $i=1$. \\
In this case, $Q_{i-1,132}^{(k-1,0,0,0)}(x) Q_{n-i,132}^{(k,0,0,0)}(x) 
=  Q_{n-1,132}^{(k,0,0,0)}(x)$ and we know that 
$$Q_{n-1,132}^{(k,0,0,0)}(x)|_{x^{n-k-1}} = C_{n-1-k} \ \mbox{for } n \geq k+2.$$
\ \\
{\bf Case 2.} $i=n-1$. \\
In this case, $Q_{i-1,132}^{(k-1,0,0,0)}(x) Q_{n-i,132}^{(k,0,0,0)}(x) 
=  Q_{n-2,132}^{(k-1,0,0,0)}(x)$ and we know that 
$$Q_{n-2,132}^{(k-1,0,0,0)}(x)|_{x^{n-k-1}} = C_{n-k-1} \ \mbox{for } n \geq k+2.$$
\ \\
{\bf Case 3.} $i=n$. \\
In this case, $Q_{i-1,132}^{(k-1,0,0,0)}(x) Q_{n-i,132}^{(k,0,0,0)}(x) 
=  Q_{n-1,132}^{(k-1,0,0,0)}(x)$ and we know by induction that 
$$
Q_{n-1,132}^{(k-1,0,0,0)}(x)|_{x^{n-k-1}} =  C_{n-k} +2(k-2)C_{n-k-1}\ \mbox{for } n \geq k+2.$$

Thus for $n \geq k+2$, $Q_{n,132}^{(k,0,0,0)}(x)|_{x^{n-k-1}} = C_{n-k}+
2(k-1)C_{n-k-1}$.

\end{proof}

We note that the sequence $(Q_{n,132}^{(2,0,0,0)}(x)|_{x^{n-3}})_{n \geq 4}$ 
is sequence A038629 in the OEIS which previously had no combinatorial 
interpretation. The sequences  
$(Q_{n,132}^{(3,0,0,0)}(x)|_{x^{n-4}})_{n \geq 5}$ and 
$(Q_{n,132}^{(4,0,0,0)}(x)|_{x^{n-5}})_{n \geq 6}$ do not appear in 
the OEIS.

We have computed that 
\begin{eqnarray*}
&&Q_{132}^{(3,0,0,0)}(t,x) = 1+t+2 t^2+5 t^3+(13+x) t^4+\left(34+6 x+2 x^2\right) t^5+\\
&&\left(89+25 x+13 x^2+5 x^3\right) t^6+\left(233+90 x+58 x^2+34 x^3+14 x^4\right) t^7+\\
&&\left(610+300 x+222 x^2+158 x^3+98 x^4+42 x^5\right) t^8+\\
&&\left(1597+954 x+783 x^2+628 x^3+468 x^4+300 x^5+132 x^6\right) t^9
+ \cdots. 
\end{eqnarray*}

The sequence 
$(Q^{(3,0,0,0)}_{n,132}(0))_{n \geq 0}$ is sequence A001519 in the OEIS whose 
terms satisfy the recursion $a(n) = 3a(n-1)-a(n-2)$ with 
$a(0) =a(1) =1$. That is, since 
$Q_{132}^{(3,0,0,0)}(t,0) =\frac{1-2t}{1-3t+t^2}$, it 
is easy to see that for $n \geq 2$, 
\begin{equation}\label{3000rec}
Q^{(3,0,0,0)}_{n,132}(0) = 3Q^{(3,0,0,0)}_{n-1,132}(0) - 
Q^{(3,0,0,0)}_{n-2,132}(0)
\end{equation}
with $Q^{(3,0,0,0)}_{0,132}(0) = Q^{(3,0,0,0)}_{1,132}(0) =1$. 

Avoidance of $MMP(3,0,0,0)$ is equivalent to avoiding the six (classical) patterns of length 4 beginning with 1 as well as the pattern 132, which, in turn, is equivalent to avoidance of 132 and 1234 simultaneously. Even though A001519 in the OEIS gives a relevant combinatorial interpretation as the number of permutations 
$\sg \in S_{n+1}$ that avoid the patterns 321 and 3412 simultaneously, our enumeration of permutations avoiding at the same time 132 and 1234 seems to be new thus extending the results in Table 6.3 in \cite{kit}. 

\begin{problem} Can one give a combinatorial proof of (\ref{3000rec})? \end{problem}

\begin{problem} Do any of the known bijections between $S_n(132)$ and $S_n(321)$ (see \cite[Chapter 4]{kit}) send $(132,1234)$-avoiding permutations to $(321,3412)$-avoiding permutations? If not, find such a bijection.\end{problem}

The sequence 
$(Q_{n,132}^{(3,0,0,0)}(x)|_{x})_{n \geq 4}$ is sequence 
A001871 in the OEIS, which 
has the generating function 
$\frac{1}{(1-3x+x^2)^2}$.  The $n$th term of this sequence 
counts the number 
of 3412-avoiding permutations containing exactly one occurrence of the pattern 321. 
We can use the recursion (\ref{xrec3}) to prove that these sequences are the same. That is, 
\begin{eqnarray}
Q_{132}^{(3,0,0,0)}(t,x)|_x &=& Q_{132}^{(2,0,0,0)}(t,x)|_x 
\frac{t\frac{d}{dt} Q_{132}^{(3,0,0,0)}(t,0)}{ \frac{d}{dt}\left(t Q_{132}^{(2,0,0,0)}(t,0)\right)} \nonumber \\
&=& \frac{t^3}{(1-2t)^2} \cdot \frac{t \frac{d}{dt} \left( \frac{1-2t}{1-3t+t^2} \right)}{
\frac{d}{dt}\frac{t(1-t)}{1-2t}} \nonumber \\
&=& \frac{t^4}{(1-3t-t^2)^2}. \nonumber
\end{eqnarray}

We have computed that 
\begin{eqnarray*}
&&Q_{132}^{(4,0,0,0)}(t,x) =1+t+2 t^2+5 t^3+14 t^4+(41+x) t^5+\left(122+8 x+2 x^2\right) t^6+\\
&&\left(365+42 x+17 x^2+5 x^3\right) t^7+\left(1094+184 x+94 x^2+44 x^3+14 x^4\right) t^8+\\
&&\left(3281+731 x+431 x^2+251 x^3+126 x^4+42 x^5\right) t^9+ \cdots.
\end{eqnarray*}

The sequence 
$(Q_{132}^{(4,0,0,0)}(t,0))_{n \geq 1}$ is A007051 in the OEIS. It is easy to compute that 
\begin{eqnarray*}
Q_{132}^{(4,0,0,0)}(t,0) &=& \frac{1-3t+t^2}{1-4t+3t^2}\\
&=& \frac{1-3t+t^2}{(1-t)(1-3t)} \\
&=& 1+\sum_{n \geq 1} \frac{3^{n-1}+1}{2}t^n.
\end{eqnarray*}
Thus, for $n \geq 1$, $Q^{(4,0,0,0)}_{n,132}(0) =  \frac{3^{n-1}+1}{2}$, which also 
counts the number of ordered trees with $n-1$ edges and height at most 4.

The sequence $(Q_{132}^{(4,0,0,0)}(t,x)|_x)_{n \geq 5}$, whose initial terms are 
$$1,8,42,184,731, \ldots, $$
does not appear in the OEIS. However, we can use 
the recursion (\ref{xrec3}) to find its generating function. 
That is, 
\begin{eqnarray}
Q_{132}^{(4,0,0,0)}(t,x)|_x &=& Q_{132}^{(3,0,0,0)}(t,x)|_x 
\frac{t\frac{d}{dt} Q_{132}^{(4,0,0,0)}(t,0)}{ \frac{d}{dt}\left(t Q_{132}^{(3,0,0,0)}(t,0)\right)} \nonumber \\
&=& \frac{t^4}{(1-3t+t^2)^2} \frac{t \frac{d}{dt} \left( \frac{1-3t+t^2}{1-4t+3t^2} \right)}{
\frac{d}{dt}\frac{t(1-2t)}{1-3t+t^2}} \nonumber \\
&=& \frac{t^5}{(1-4t+3t^2)^2}. \nonumber 
\end{eqnarray}

\section{The function $Q_{132}^{(0,0,k,0)}(t,x)$}

In this section, we shall study 
the generating function $Q_{132}^{(0,0,k,0)}(t,x)$ for $k \geq 1$.
Fix $k \geq 1$. It is easy to see that $A_i(\sg)$ will contribute $\mmp^{(0,0,k,0)}(\red[A_i(\sg)])$ to $\mmp^{(0,0,k,0)}(\sg)$, since 
neither $n$ nor any of the elements to the right of $n$ have 
any effect on whether an element in  $A_i(\sg)$ matches 
the  pattern $MMP(0,0,k,0)$ in $\sg$. Similarly, $B_i(\sg)$ will contribute $\mmp^{(0,0,k,0)}(\red[B_i(\sg)])$ to $\mmp^{(0,0,k,0)}(\sg)$, since 
neither $n$ nor any of the elements to the left of $n$ have 
any effect on whether an element in  $B_i(\sg)$ matches 
the  pattern $MMP(0,0,k,0)$ in $\sg$. Note that $n$ will contribute $1$ to  $\mmp^{(0,0,k,0)}$ if and only if 
$k <i$. 

It follows that 
\begin{equation}\label{132-00k0rec}
Q_{n,132}^{(0,0,k,0)}(x) = \sum_{i=1}^k  Q_{i-1,132}
^{(0,0,k,0)}(x) Q_{n-i,132}^{(0,0,k,0)}(x) + 
x\sum_{i=k+1}^n  Q_{i-1,132}^{(0,0,k,0)}(x) Q_{n-i,132}^{(0,0,k,0)}(x).
\end{equation}
Note that if $i \leq k$, 
$Q_{i-1,132}^{(0,0,k,0)}(x) = C_{i-1}$. Thus, 
\begin{equation}\label{132-00k0rec2}
Q_{n,132}^{(0,0,k,0)}(x) = \sum_{i=1}^k  C_{i-1}Q_{n-i,132}^{(0,0,k,0)}(x) + 
x\sum_{i=k+1}^n  Q_{i-1,132}^{(0,0,k,0)}(x) Q_{n-i,132}^{(0,0,k,0)}(x).
\end{equation}
Multiplying both sides of (\ref{132-00k0rec2}) by $t^n$ and summing 
for $n \geq 1$ shows 
that 
\begin{multline*}\label{132-00k0rec3}
-1+Q_{132}^{(0,0,k,0)}(t,x) =\\ t(C_0+C_1t+\cdots +C_{k-1}t^{k-1})
Q_{132}^{(0,0,k,0)}(t,x) +  \\
tx Q_{132}^{(0,0,k,0)}(t,x)
(Q_{132}^{(0,0,k,0)}(t,x) -(C_0+C_1t+\cdots +C_{k-1}t^{k-1})).
\end{multline*}
Thus, we obtain the quadratic equation
\begin{equation}\label{132-00k0rec4}
0= 1- (-1+(t-tx)(C_0+C_1t+\cdots +C_{k-1}t^{k-1}))
Q_{132}^{(0,0,k,0)}(t,x) + tx (Q_{132}^{(0,0,k,0)}(t,x))^2.
\end{equation}
This implies the following theorem. 
\begin{theorem}\label{thm:Q00k0} For $k \geq 1$, 
\begin{align}\label{gf00k0}
Q_{132}^{(0,0,k,0)}(t,x)&=\frac{1+(tx-t)(\sum_{j=0}^{k-1}C_jt^j) - 
\sqrt{(1+(tx-t)(\sum_{j=0}^{k-1}C_jt^j))^2 -4tx}}{2tx}\\
&=\frac{2}{1+(tx-t)(\sum_{j=0}^{k-1}C_jt^j) + \sqrt{(1+(tx-t)(\sum_{j=0}^{k-1}C_jt^j))^2 -4tx}}\notag
\end{align}
and  
\begin{equation}
Q_{132}^{(0,0,k,0)}(t,0) = \frac{1}{1-t(C_0+C_1 t+\cdots +C_{k-1}t^{k-1})}.
\end{equation}
\end{theorem}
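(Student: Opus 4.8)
The plan is to read the theorem off directly from the quadratic equation (\ref{132-00k0rec4}), which already encodes the entire recursive structure of the generating function. Writing $P(t) = \sum_{j=0}^{k-1} C_j t^j$ and abbreviating $Q = Q_{132}^{(0,0,k,0)}(t,x)$, equation (\ref{132-00k0rec4}) is a genuine quadratic in $Q$ with polynomial coefficients, the coefficient of $Q^2$ being $tx$ and the constant term being $1$. So the first step is simply to apply the quadratic formula, which produces two candidate expressions differing only in the sign of the radical $\sqrt{(1+(tx-t)P(t))^2 - 4tx}$.

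The second step, and the only genuinely delicate point, is to select the correct branch. Since $Q$ is by definition a formal power series in $t$ with constant term $1$ (coming from the empty permutation, $C_0 = 1$), I would argue via the product of the two roots, which equals (constant term)/(leading coefficient) $= 1/(tx)$. As $t \to 0$ this product blows up, so the two roots behave like $1/(tx)$ and $1$ respectively; only the latter can be a power series, and it is the root obtained with the \emph{minus} sign in front of the radical. Concretely, expanding $\sqrt{(1+(tx-t)P(t))^2 - 4tx}$ as a series shows that the minus-branch numerator equals $2tx/(1+(tx-t)P(t)) + O(t^2)$, so after dividing by $2tx$ one obtains constant term $1$, as required. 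This pins down the first displayed formula for $Q_{132}^{(0,0,k,0)}(t,x)$.

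The third step is the rationalization that yields the second displayed form: multiplying numerator and denominator by the conjugate $1+(tx-t)P(t) + \sqrt{(1+(tx-t)P(t))^2 - 4tx}$ and using $(A-B)(A+B) = A^2 - B^2$ collapses the numerator to $4tx$, which cancels the $2tx$ in the denominator and leaves $2$ over the conjugate. This second form is also the convenient vehicle for the final step, the specialization $x = 0$: the first form is an indeterminate $0/0$ there, but in the second form the radical becomes $\sqrt{(1 - tP(t))^2} = 1 - tP(t)$ (the branch with constant term $1$), so the denominator is $2(1 - tP(t))$ and $Q_{132}^{(0,0,k,0)}(t,0) = 1/\bigl(1 - t(C_0 + \cdots + C_{k-1}t^{k-1})\bigr)$. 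Equivalently, one can set $x=0$ directly in (\ref{132-00k0rec4}), where the $Q^2$ term drops out and the resulting linear equation gives the same answer at once.

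I expect the branch selection in the second step to be the main conceptual obstacle: the quadratic formula by itself is ambiguous, and one must invoke the power-series normalization (here packaged as the product-of-roots / small-$t$ asymptotics, which is insensitive to the exact middle coefficient) in order to discard the spurious root. Everything else — the quadratic formula, the conjugate rationalization, and the $x=0$ specialization — is routine algebra once the branch has been fixed.
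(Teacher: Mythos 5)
Your proposal is correct and takes essentially the same route as the paper: the paper derives the quadratic (\ref{132-00k0rec4}) and then simply asserts ``this implies the following theorem,'' while you spell out the implicit steps (quadratic formula, power-series branch selection via the product of roots, conjugate rationalization, and the $x=0$ specialization), all of which are valid. One minor point worth noting: as printed, the linear coefficient in (\ref{132-00k0rec4}) has a sign typo (it should read $0 = 1 - \bigl(1+(tx-t)\sum_{j=0}^{k-1}C_jt^j\bigr)Q + tx\,Q^2$), a discrepancy your branch-selection argument is, as you yourself observe, insensitive to.
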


By Corollary \ref{Dyckpaths}, $Q_{132}^{(0,0,k,0)}(t,0)$ is 
also the generating function of the number of  Dyck paths that have no 
interval of length $\geq 2k$ and the generating function 
of the number of  rooted binary trees $T$ such that $T$ has no node 
$\eta$ whose left subtree has size $\geq k$.

\subsection{Explicit formulas for  $Q^{(0,0,k,0)}_{n,132}(x)|_{x^r}$}

It is easy to explain the highest power and the second highest 
power of 
$x$ that occurs in $Q_{n,132}^{(0,0,k,0)}(x)$ for any $k \geq 1$. 
The case is $k=1$ is special and will be handled in the theorem 
following our next theorem which handles the cases where 
$k \geq 2$. 
\begin{theorem}\label{Q00k0high}
\begin{enumerate}
For all $k \geq 2$ and $n > k$, 
\item the highest power of 
$x$ that occurs in $Q_{n,132}^{(0,0,k,0)}(x)$ is 
$x^{n-k}$, with $Q_{n,132}^{(0,0,k,0)}(x)|_{x^{n-k}} = C_{k}$, and 
\item 
$Q_{n,132}^{(0,0,k,0)}(x)|_{x^{n-k-1}} = C_{k+1}-C_{k}+ 
2(n-k-1)C_{k}$.
\end{enumerate}
\end{theorem}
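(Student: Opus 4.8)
The plan is to prove both parts by induction on $n$ using the convolution recurrence (\ref{132-00k0rec2}), handling part (1) first since part (2) relies on the degree and leading-coefficient information it supplies. Throughout, write $P_m=Q_{m,132}^{(0,0,k,0)}(x)$, and recall that $P_m=C_m$ (a constant) whenever $m\le k$, because in a permutation of $S_m(132)$ with $m\le k$ no entry can have $k$ smaller entries to its left.

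For part (1) I would first record the elementary bound $\mmp^{(0,0,k,0)}(\sg)\le n-k$: an entry in one of the first $k$ positions of $\sg$ has fewer than $k$ entries to its left, so only the $n-k$ positions $k+1,\dots,n$ can match $MMP(0,0,k,0)$. Thus $\deg_x P_n\le n-k$. To isolate the coefficient of $x^{n-k}$, I substitute the inductive hypothesis ($\deg_x P_m=\max(0,m-k)$ with leading coefficient $C_k$ for $m>k$) into (\ref{132-00k0rec2}) and compare degrees: the first sum contributes degree at most $n-1-k$, while among the products $x\,P_{i-1}P_{n-i}$ in the second sum only the term $i=n$, equal to $x\,P_{n-1}$, attains total degree $n-k$. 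Hence $P_n|_{x^{n-k}}$ equals the leading coefficient of $P_{n-1}$, which is $C_k$ by induction. The base case $n=k+1$ is direct: the only candidate position is $k+1$, and it matches iff $\sg_{k+1}=k+1$, so the top coefficient counts the $C_k$ permutations of $S_{k+1}(132)$ ending in their maximum.

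For part (2) I would extract the coefficient of $x^{n-k-1}$ from (\ref{132-00k0rec2}), now treating the degree data of part (1) as known. The key point is that only boundary convolution terms can reach degree $n-k-1$, and each survivor is a leading or second-highest coefficient of a smaller $P_m$. Writing $f_k(n):=P_n|_{x^{n-k-1}}$, the term $i=1$ of the first sum contributes the leading coefficient $C_k$ of $P_{n-1}$; the term $i=n$ of the second sum contributes the second-highest coefficient $f_k(n-1)$ of $P_{n-1}$; and the term $i=n-1$ contributes the leading coefficient $C_k$ of $P_{n-2}$. This collapses to the first-order recurrence $f_k(n)=f_k(n-1)+2C_k$. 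Its base value $f_k(k+1)$ is the constant term of $P_{k+1}$, i.e. the number of $\sg\in S_{k+1}(132)$ with $\mmp^{(0,0,k,0)}(\sg)=0$; since a match forces $\sg_{k+1}=k+1$, this equals $C_{k+1}-C_k$. Solving yields the closed form $f_k(n)=C_{k+1}-C_k+2C_k(n-k-1)$.

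The main obstacle is the bookkeeping that makes this recurrence genuinely first-order, namely verifying that every ``interior'' product $P_{i-1}P_{n-i}$ with both factors of positive degree falls strictly below degree $n-k-1$. Such a product has degree $(n-1)-2k$, and the comparison $(n-1)-2k<n-k-1$ holds exactly when $k\ge 2$. At the threshold $k=1$ these interior terms all reach the critical degree at once and each donates its leading coefficient, so the increment becomes $n$-dependent and the count turns quadratic (namely $\binom{n}{2}$) rather than linear; I would therefore expect $k=1$ to need separate treatment. Carrying out this degree comparison cleanly, and pinning down the exact Catalan index that appears in the increment, is the step where I expect the calculation to be most delicate and error-prone.
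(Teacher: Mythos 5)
Your proof is, at bottom, the same argument as the paper's: the paper proves part (1) by characterizing the extremal permutations directly ($\sg_1\cdots\sg_k\in S_k(132)$ followed by $(k+1)(k+2)\cdots n$), and proves part (2) by classifying the permutations $\sg\in S_{n+1}(132)$ with $\mmp^{(0,0,k,0)}(\sg)=n-k$ according to the position of $n+1$ --- which is exactly the combinatorial content of your extraction of the coefficient of $x^{n-k-1}$ from the convolution (\ref{132-00k0rec2}); both routes end in a first-order recurrence with base value $C_{k+1}-C_k$ at $n=k+1$. The one substantive discrepancy is the increment, and there you are right and the printed theorem is wrong. In the paper's two boundary cases ($\sg_1=n+1$, which forces the remaining $n$ letters to attain the maximal $n-k$ matches, and $\sg_n=n+1$, which forces $\sg_{n+1}=1$ and the first $n-1$ letters to attain the maximal $(n-1)-k$ matches), part (1) gives $C_k$ choices each, not the $C_{k-1}$ the paper writes; so the recursion (\ref{ankrec}) should read $a_{n+1,k}=a_{n,k}+2C_k$ and the closed form should be $C_{k+1}-C_k+2(n-k-1)C_k$, exactly what you derived. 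The paper's own series confirm your constant: for $k=2$ the relevant coefficients $7,11,15,19,\dots$ increase by $4=2C_2$ (not $2=2C_1$), and the paper later asserts that $Q_{n,132}^{(0,0,2,0)}(x)|_{x^{n-3}}=3+4(n-3)$ ``is a special case'' of this theorem, which matches $2C_k$ and contradicts the printed $2C_{k-1}$.

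Your handling of small $k$ is also more careful than the paper's. The printed statement carries no restriction on $k$ in part (2), but the paper's proof opens with ``suppose that $k\geq 3$,'' and its interior-position estimate $n+1-2k<n-k-1$ literally needs $k\geq 3$; your observation that the interior products lie strictly below the target degree precisely when $k\geq 2$ covers the case $k=2$ as well, where the conclusion still holds. (One slip in your write-up: the interior term $x\,P_{i-1}P_{n-i}$ has total degree $n-2k$, since the product itself has degree $(n-1)-2k$ and the factor $x$ adds one; the comparison you want is $n-2k<n-k-1$, which holds exactly when $k\geq 2$ --- as written, your inequality $(n-1)-2k<n-k-1$ holds for all $k\geq 1$, but your stated threshold is the correct one.) Finally, your diagnosis of $k=1$ is correct and important: there the interior terms reach the critical degree, the increment becomes $n$-dependent, and the coefficient is $\binom{n}{2}$, as the paper itself proves in Theorem \ref{0010coef}(4); so part (2) as printed is false for $k=1$ even after correcting $C_{k-1}$ to $C_k$. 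In short, your proof is sound for $k\geq 2$, correctly isolates the genuine exception $k=1$, and in the process fixes two index errors in the paper's statement and proof.
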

\begin{proof}
For (1), it is easy to see that, for any $k \geq 1$, the 
maximum number of \\ $MMP(0,0,k,0)$-matches occurs 
in a permutation $\sg = \sg_1 \cdots \sg_n \in S_n(132)$ only when 
$\sg_1 \cdots \sg_k \in S_k(132)$ and $\sg_{k+1} \cdots \sg_n 
=(k+1)(k+2) \cdots n$. Thus, 
$Q_{n,132}^{(0,0,k,0)}(x)|_{x^{n-k}}=C_k$ for $n \geq k+1$.

For (2), suppose 
that $k \geq 3$, and define $a_{n,k} = Q_{n,132}^{(0,0,k,0)}(x)|_{x^{n-k-1}}$, where 
$n > k+1$.  Then,  suppose that 
$\sg = \sg_1 \cdots \sg_{n+1} \in S_{n+1}(132)$ 
is such that $\mmp^{(0,0,k,0)}(\sg) =n-k$. By definition, the number of such $\sigma$ is $a_{n+1,k}$. Then, if 
$\sg_{n+1} =n+1$, we must have $\mmp^{(0,0,k,0)}(\sg_1 \cdots \sg_n) =n-k-1$,
so we have $a_{n,k}$ choices for $\sg_1 \cdots \sg_n$. 
If $\sg_1 =n+1$, then $\mmp^{(0,0,k,0)}(\sg_2 \cdots \sg_{n+1}) =n-k$, 
so we have $C_{k}$ choices for $\sg_2 \cdots \sg_{n+1}$. 
If $\sg_{n} =n+1$, then $\sg_{n+1} =1$ and 
$\mmp^{(0,0,k,0)}(\sg_1 \cdots \sg_{n-1}) =n-k-1$, 
so we have $C_{k}$ choices for $\sg_1 \cdots \sg_{n-1}$.  
If $\sg_i =n+1$, where $2 \leq i \leq k$, then 
$\sg_1 \cdots \sg_i$ cannot contribute to $\mmp^{(0,0,k,0)}(\sg)$, 
so $\mmp^{(0,0,k,0)}(\sg) = \mmp^{(0,0,k,0)}(\sg_{k+1} \cdots 
\sg_{n+1}) \leq n-i -k < n-k-1$. If $\sg_i =n+1$, 
where $n-k+1\leq i \leq n-1$, 
then 
$\sg_{i+1} \cdots \sg_{n+1}$ cannot contribute to $\mmp^{(0,0,k,0)}(\sg)$, 
so $\mmp^{(0,0,k,0)}(\sg) = \mmp^{(0,0,k,0)}(\sg_1  \cdots 
\sg_i) \leq i -k \leq  n-k-1$. Finally if 
$\sg_i =n+1$, where $k+1\leq i  \leq n-k$, then 
\begin{eqnarray*}
\mmp^{(0,0,k,0)}(\sg) &=& \mmp^{(0,0,k,0)}(\red[\sg_1 \cdots \sg_i]) + 
\mmp^{(0,0,k,0)}(\sg_{i+1} \cdots \sg_{n+1})\\
&\leq& i-k + (n+1-i -k) = 
n+1 -2k < n-k -1.
\end{eqnarray*}
Thus, it follows that for $n \geq k+1$,  $a_{n,k}$ satisfies the recursion 
\begin{equation}\label{ankrec}
a_{n+1,k} = a_{n,k} + 2 C_{k}.
\end{equation} 

In general, if $n=k+1$, then there are $C_{k+1}-C_{k}$ permutations 
in $S_n(132)$ avoiding $MMP(0,0,k,0)$, namely, those 
that do not have $\sg_{k+1} =k+1$.  Using this as the base case, we may 
solve recursion (\ref{ankrec}) to obtain
$a_{n,k} = C_{k+1}-C_k +2(n-k-1)C_{k}$.
\end{proof}

Again, we can easily use Mathematica to compute 
some initial terms of the generating function $Q_{132}^{(0,0,k,0)}(t,x)$ 
for small $k$. For example, we have computed that 
\begin{eqnarray*}
&&Q_{132}^{(0,0,1,0)}(t,x) = 1+t+(1+x) t^2+\left(1+3 x+x^2\right) t^3+
\left(1+6 x+6 x^2+x^3\right) t^4+\\
&&\left(1+10 x+20 x^2+10 x^3+x^4\right) t^5+\left(1+15 x+50 x^2+50 x^3+15 x^4+x^5\right) t^6+\\
&&\left(1+21 x+105 x^2+175 x^3+105 x^4+21 x^5+x^6\right) t^7+\\
&&\left(1+28 x+196 x^2+490 x^3+490 x^4+196 x^5+28 x^6+x^7\right) t^8+\\
&&\left(1+36 x+336 x^2+1176 x^3+1764 x^4+1176 x^5+336 x^6+36 x^7+x^8\right) t^9
+ \cdots. 
\end{eqnarray*}

It is easy to explain several of the coefficients of 
$Q_{n,132}^{(0,0,1,0)}(x)$.  That is, the following hold. 
\begin{theorem}\label{0010coef} \ \\
\begin{enumerate}
\item $Q_{n,132}^{(0,0,1,0)}(0)=1$ for $n \geq 1$, 
\item $Q_{n,132}^{(0,0,1,0)}(x)|_{x^{n-1}}=1$ for $n \geq 2$, 
\item   $Q_{n,132}^{(0,0,1,0)}(x)|_x = \binom{n}{2}$ for $n \geq 2$, and 
\item $Q_{n,132}^{(0,0,1,0)}(x)|_{x^{n-2}} = \binom{n}{2}$ for $n \geq 3$.
\end{enumerate}
\end{theorem}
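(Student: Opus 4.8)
The plan is to first translate the statistic into a classical one. Since $a=b=d=0$, a position $\sigma_i$ matches $MMP(0,0,1,0)$ exactly when at least one entry to its left is smaller, i.e.\ when $\sigma_i$ is \emph{not} a left-to-right minimum. Hence $\mmp^{(0,0,1,0)}(\sigma)=n-\LRmin(\sigma)$, where $\LRmin(\sigma)$ is the number of left-to-right minima, and $Q_{n,132}^{(0,0,1,0)}(x)|_{x^r}$ counts the $\sigma\in S_n(132)$ with exactly $n-r$ left-to-right minima. Parts (1) and (2) then fall out immediately: the only permutation with every entry a left-to-right minimum is the decreasing permutation $n(n-1)\cdots 1$ (which avoids $132$), giving constant term $1$; and the only one with a single left-to-right minimum must have $\sigma_1=1$ (the global minimum is always a left-to-right minimum, forcing it into the first slot), after which $132$-avoidance forces $\sigma$ to be the increasing permutation $12\cdots n$, giving $Q_{n,132}^{(0,0,1,0)}(x)|_{x^{n-1}}=1$. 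Alternatively, (1) follows from $Q_{132}^{(0,0,1,0)}(t,0)=\tfrac{1}{1-t}$ in Theorem~\ref{thm:Q00k0} and (2) from Theorem~\ref{Q00k0high}(1) with $k=1$, whose top coefficient is $C_1=1$.

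For (3) and (4) I would run the recursion (\ref{132-00k0rec}) with $k=1$, namely $Q_{n,132}^{(0,0,1,0)}(x)=Q_{n-1,132}^{(0,0,1,0)}(x)+x\sum_{i=2}^{n}Q_{i-1,132}^{(0,0,1,0)}(x)\,Q_{n-i,132}^{(0,0,1,0)}(x)$, and extract the relevant coefficient. For (3), write $g(n)=Q_{n,132}^{(0,0,1,0)}(x)|_{x}$. Using part (1) (every $Q_{m,132}^{(0,0,1,0)}$ has constant term $1$), the coefficient of $x$ on the right is $g(n-1)$ from the first term plus $n-1$ from the $x$-term, since each of the $n-1$ products $Q_{i-1}Q_{n-i}$ contributes its constant term $1$. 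This gives $g(n)=g(n-1)+(n-1)$ with $g(1)=0$, whence $g(n)=\binom{n}{2}$. One can also prove (3) structurally: a permutation of $S_n$ with exactly $n-1$ left-to-right minima is decreasing except for one raised entry, such a ``single spike'' is automatically $132$-avoiding, and their number is the unsigned Stirling number of the first kind $c(n,n-1)=\binom{n}{2}$.

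For (4) I would argue via the decomposition of $\sigma\in S_n^{(i)}(132)$ by the position $i$ of $n$ (the structure in \fref{basic2}). Because every entry of $A_i(\sigma)$ exceeds every entry of $B_i(\sigma)$, the left-to-right minima split cleanly: for $i\ge 2$ one has $\LRmin(\sigma)=\LRmin(A_i(\sigma))+\LRmin(B_i(\sigma))$ (the entry $n$ is never a left-to-right minimum), while for $i=1$ one has $\LRmin(\sigma)=1+\LRmin(B_1(\sigma))$. Setting $f(n)=Q_{n,132}^{(0,0,1,0)}(x)|_{x^{n-2}}$, the condition is $\LRmin(\sigma)=2$. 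The case $i=1$ needs $\LRmin(B_1)=1$, contributing exactly $1$ by part (2); the cases $2\le i\le n-1$ need $\LRmin(A_i)=\LRmin(B_i)=1$ with both blocks nonempty, contributing $1$ each by part (2), hence $n-2$ in total; and the case $i=n$ needs $\LRmin(A_n)=2$ with $B_n$ empty, contributing $f(n-1)$. Summing gives $f(n)=f(n-1)+(n-1)$ with base value $f(2)=1$, and solving yields $f(n)=\binom{n}{2}$.

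The main obstacle is part (4): unlike (3), the family of $132$-avoiders with two left-to-right minima is \emph{not} the same as all permutations with two left-to-right minima (the latter is counted by $c(n,2)\neq\binom{n}{2}$), so the $132$-structure must genuinely be used. The delicate points are the additivity of $\LRmin$ under the $A_i/B_i$ split and the asymmetric treatment of $i=1$ (where $n$ is itself a left-to-right minimum) together with the empty-block boundary cases $i=1$ and $i=n$; handling these bookkeeping cases correctly is exactly what makes the recursion $f(n)=f(n-1)+(n-1)$ come out cleanly.
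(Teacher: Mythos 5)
Your proposal is correct, and most of it coincides with the paper's own argument: parts (1) and (2) are proved there exactly as you do (the decreasing and increasing permutations are the unique extremal cases), and your part (4) is the paper's induction in only slightly different clothing --- the paper classifies by the position $i$ of $n$, gets $\binom{n-1}{2}$ from the case $\sg_n=n$ by induction, and observes that for $1\le i\le n-1$ the permutation is forced to be $(n-i+1)\cdots(n-1)n12\cdots(n-i)$, giving $n-1$ further permutations; your $\LRmin$-additivity bookkeeping just splits that $n-1$ as $1+(n-2)$ via part (2), yielding the same recursion $f(n)=f(n-1)+(n-1)$. The one place you genuinely diverge is part (3): the paper proves it by explicitly exhibiting the $\binom{n}{2}$ permutations $\sg^{(i,j)}=n(n-1)\cdots(j+1)(j-1)\cdots ij(i-1)\cdots 1$ with exactly one non-left-to-right minimum, whereas your primary route extracts the coefficient of $x$ from the recursion (\ref{132-00k0rec}) with $k=1$, getting $g(n)=g(n-1)+(n-1)$ from the constant terms of the products $Q_{i-1}Q_{n-i}$ --- a clean computation that needs no explicit description of the permutations. (Your ``alternative'' structural proof of (3), decreasing permutations with one raised entry counted by $c(n,n-1)=\binom{n}{2}$, is in substance the paper's proof, and your observation that $132$-avoidance is automatic there but genuinely needed in (4), where all of $S_n$ would give $c(n,2)\neq\binom{n}{2}$, is a worthwhile point the paper does not make explicit.)
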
 
\begin{proof}
It is easy to see that $n(n-1) \cdots 1$ is the only permutation 
$\sg \in S_n(132)$ such that $\mmp^{(0,0,1,0)}(\sg) = 0$.  
Thus, $Q_{n,132}^{(0,0,1,0)}(0) =1$ for all $n \geq 1$. Similarly, 
for $n \geq 2$, $\sg =12 \cdots (n-1)n$ is the only permutation 
in $S_n(132)$ with $\mmp^{(0,0,1,0)}(\sg) = n-1$ so 
that $Q_{n,132}^{(0,0,1,0)}(x)|_{x^{n-1}}=1$ for $n \geq 2$.

To prove (3),  let $\sg^{(i,j)} = n(n-1) \cdots (j+1) (j-1) \cdots ij(i-1) \cdots 1$ for any $1 \leq i < j \leq n$. It is easy to see that $\mmp^{(0,0,1,0)}(\sg^{(i,j)}) =1$ and 
that these are the only permutations $\sg$ in $S_n(132)$ such 
that $\mmp^{(0,0,1,0)}(\sg) =1$. Thus,  
$Q_{n,132}^{(0,0,1,0)}(x)|_x = \binom{n}{2}$ for $n \geq 2$.

For (4), we prove by induction that  
$Q_{n,132}^{(0,0,1,0)}(x)|_{x^{n-2}} = \binom{n}{2}$ for $n \geq 3$. 
The theorem holds for $n=3,4$. Now suppose that   
$n \geq 5$ and $\sg \in S_n(132)$ and
$\mmp^{(0,0,1,0)}(\sg) =n-2$. Then if 
$\sg_n=n$, it must be the case that  
$\mmp^{(0,0,1,0)}(\sg_1 \cdots \sg_{n-1}) =n-3$, so by induction 
we have $\binom{n-1}{2}$ choices for $\sg_1 \cdots \sg_{n-1}$. 
If $\sg_i = n$, where $1 \leq i \leq n-1$, then it must be the case that 
$\sg = (n-k+1) \cdots (n-1) n 1 2 \cdots (n-k)$, so there 
are $n-1$ such permutations where $\sg_n \neq n$. Thus, we have a total of $\binom{n}{2}$ with $\mmp^{(0,0,1,0)}(\sg) = n-2$.
\end{proof}

More generally, one can observe that the coefficients of $x^j$ and $x^{n-j-1}$ 
in $Q_{n,132}^{(0,0,1,0)}(x)$ are the same. This can be proved directly from its generating function. 
That is, by Theorem \ref{thm:Q00k0}, 
$$Q_{132}^{(0,0,1,0)}(t,x) = \frac{1+t(x-1)- \sqrt{(1+t(x-1))^2-4xt}}{2xt}.$$
Further, define
$$R_{132}^{(0,0,1,0)}(t,x) =\frac{Q_{132}^{(0,0,1,0)}(t,x)-1}{t} = 
\frac{1-t(x+1) - \sqrt{(1+t(x-1))^2-4xt}}{2xt^2}.$$
The observed symmetry is then just the statement that 
$R_{132}^{(0,0,1,0)}(t,x) = R_{132}^{(0,0,1,0)}(tx,1/x)$, which can be 
easily checked. We shall give a combinatorial proof 
of this symmetry in Section 6; see the discussion of (\ref{compare1}).

We have computed that 
\begin{eqnarray*}
&&Q_{132}^{(0,0,2,0)}(t,x) = 1+t+2 t^2+ (3+2 x)t^3+
\left(5+7 x+2 x^2\right)t^4+ \\
&&\left(8+21 x+11 x^2+2 x^3\right)t^5+\left(13+53 x+49 x^2+15 x^3+2 x^4\right))t^6+\\
&& \left(21+124 x+174 x^2+89 x^3+19 x^4+2 x^5\right)t^7+\\
&& \left(34+273 x+546 x^2+411 x^3+141 x^4+23 x^5+2 x^6\right)t^8+\\
&& \left(55+577x+1557x^2+1635x^3+804x^4+205x^5+27x^6+2x^7\right)t^9 + \cdots. 
\end{eqnarray*}

We then have the following proposition.
\begin{proposition}
\begin{enumerate}
\item $Q_{n,132}^{(0,0,2,0)}(0) =F_n$, where $F_n$ is the $n$th Fibonacci 
number, and 
\item $Q_{n,132}^{(0,0,2,0)}(x)|_{x^{n-3}} = 3+4(n-3)$.
\end{enumerate}
\end{proposition}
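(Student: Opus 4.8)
For part (1), the plan is to read $Q_{132}^{(0,0,2,0)}(t,0)$ directly off Theorem \ref{thm:Q00k0}. Setting $k=2$ in the closed form $Q_{132}^{(0,0,k,0)}(t,0) = \frac{1}{1-t(C_0+C_1t+\cdots+C_{k-1}t^{k-1})}$ and using $C_0=C_1=1$ gives
\[
Q_{132}^{(0,0,2,0)}(t,0) = \frac{1}{1-t(1+t)} = \frac{1}{1-t-t^2},
\]
which is the standard generating function for the Fibonacci numbers. Extracting the coefficient of $t^n$ then yields $Q_{n,132}^{(0,0,2,0)}(0) = F_n$ (under the convention $F_0=F_1=1$), since these coefficients satisfy $f_n=f_{n-1}+f_{n-2}$ with $f_0=f_1=1$. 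This part is essentially immediate from the already-established formula, so I would keep it to two lines.

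For part (2), I would extract the coefficient of $x^{n-3}$ from recursion (\ref{132-00k0rec2}) specialized to $k=2$, namely
\[
Q_{n,132}^{(0,0,2,0)}(x) = Q_{n-1,132}^{(0,0,2,0)}(x) + Q_{n-2,132}^{(0,0,2,0)}(x) + x\sum_{i=3}^n Q_{i-1,132}^{(0,0,2,0)}(x)\,Q_{n-i,132}^{(0,0,2,0)}(x).
\]
Writing $b_n = Q_{n,132}^{(0,0,2,0)}(x)|_{x^{n-3}}$, I would invoke Theorem \ref{Q00k0high}(1), which says the top-degree term of $Q_{m,132}^{(0,0,2,0)}(x)$ is $C_2\,x^{m-2}=2x^{m-2}$ for $m\ge 2$. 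Then the first term contributes its top coefficient $2$ (as $x^{(n-1)-2}=x^{n-3}$), while the second term contributes $0$ since its top degree $x^{n-4}$ is already too low. For the convolution I need the coefficient of $x^{n-4}$: a degree count shows each interior term with $3\le i\le n-2$ has total degree at most $(i-3)+(n-i-2)=n-5$ and so contributes nothing, leaving only $i=n-1$ (giving $Q_{n-2,132}^{(0,0,2,0)}(x)$, top coefficient $2$) and $i=n$ (giving $Q_{n-1,132}^{(0,0,2,0)}(x)$, whose $x^{n-4}$-coefficient is $b_{n-1}$). Assembling these yields the recursion $b_n = b_{n-1}+4$ for $n\ge 4$.

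Finally I would solve the recursion using the base case $b_3 = Q_{3,132}^{(0,0,2,0)}(x)|_{x^0}=3$, read from the computed polynomial $3+2x$, which gives $b_n = 3+4(n-3)$ for all $n\ge 3$, as claimed.

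The main obstacle is the degree bookkeeping in the convolution. Precisely because $k=2$ is small, the general second-highest-coefficient formula of Theorem \ref{Q00k0high}(2)—whose proof assumes $k\ge 3$—does not apply here: blindly substituting $k=2$ there would give $C_3-C_2+2(n-3)C_1 = 3+2(n-3)$, which disagrees with the correct answer $3+4(n-3)$. The resolution is to track exactly which convolution terms reach degree $x^{n-4}$, confirming that all interior terms ($3\le i\le n-2$) vanish while both boundary terms $i=n-1$ and $i=n$ survive; it is this boundary analysis that produces the additive constant $4$ rather than $2$.
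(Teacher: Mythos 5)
Your proof is correct, and for part (2) it takes a genuinely different---and in fact safer---route than the paper. Part (1) is identical to the paper's argument: read $Q_{132}^{(0,0,2,0)}(t,0)=\frac{1}{1-t-t^2}$ off Theorem \ref{thm:Q00k0} (the paper additionally remarks that this recovers the known enumeration of permutations simultaneously avoiding $132$, $123$, and $213$). For part (2), however, the paper gives no self-contained argument at all: it simply declares the formula to be ``a special case of Theorem \ref{Q00k0high}.'' Your direct degree analysis of the convolution (\ref{132-00k0rec2})---checking that the interior terms $3\le i\le n-2$ have degree at most $n-5$, that only the boundary terms $i=n-1$ and $i=n$ survive at degree $x^{n-4}$, and assembling $b_n=b_{n-1}+4$ with base $b_3=3$---is complete and correct, and is essentially the $k=2$ specialization of the case analysis used in the proof of Theorem \ref{Q00k0high}(2).

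One correction to your closing diagnosis, though. You suggest the printed formula $C_{k+1}-C_k+2(n-k-1)C_{k-1}$ is sound for $k\ge3$ and merely fails to cover $k=2$. In fact it is wrong for every $k\ge2$: for $k=3$, $n=5$ it gives $14-5+2\cdot 2=13$, while the paper's own series for $Q_{132}^{(0,0,3,0)}(t,x)$ shows the coefficient of $x$ in the $t^5$ term is $19$; similarly for $k=4$ the successive second-highest coefficients $28,56,84,\ldots$ increase by $2C_4=28$, not $2C_3=10$. The source is a slip in the proof of Theorem \ref{Q00k0high}(2): in the cases $\sg_1=n+1$ and $\sg_n=n+1$, the number of choices is the number of shorter permutations attaining the \emph{maximal} number of matches, which by part (1) of that same theorem is $C_k$, not $C_{k-1}$; the recursion should read $a_{n+1,k}=a_{n,k}+2C_k$, giving the corrected general formula $C_{k+1}-C_k+2(n-k-1)C_k$ (and, once the strict inequality in the middle case is read as ``$<n-k$,'' the argument covers $k=2$ as well). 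That corrected statement specializes at $k=2$ to $3+4(n-3)$, in agreement with your answer and with your boundary analysis, where each of the two surviving boundary terms contributed $2=C_2$. So the proposition is true, your proof of it stands on its own, and the discrepancy you flagged is a genuine misprint in the paper's theorem rather than a restriction of its range of validity.
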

\begin{proof}
In this case, we know that $Q_{132}^{(0,0,2,0)}(t,0) = 
\frac{1}{1-t(C_0 +C_1t)} = \frac{1}{1-t -t^2}$, so
the sequence $(Q_{n,132}^{(0,0,2,0)}(0))_{n \geq 0}$ is the Fibonacci 
numbers. This result is known~\cite[Table 6.1]{kit}, since the avoidance of $MMP(0,0,2,0)$ is equivalent to  the avoidance of the patterns 123 and 213 simultaneously, so in this case we are dealing with the 
multi-avoidance of the classical patterns 132, 123, and 213.

The fact that  $Q_{n,132}^{(0,0,2,0)}(x)|_{x^{n-3}} = 3+4(n-3)$ is 
a special case of Theorem \ref{Q00k0high}. 
\end{proof}

The sequence $(Q_{n,132}^{(0,0,2,0)}(x)|_x)_{n \geq 3}$, whose 
initial terms are 
$2,7,21,53,124,273,577, \ldots $, does not appear in the OEIS.

We have computed that 
\begin{eqnarray*}
&&Q_{132}^{(0,0,3,0)}(t,x) = 1+t+2 t^2+5 t^3+(9+5 x)t^4+ \left(18+19 x+5 x^2\right)t^5+\\
&& \left(37+61 x+29 x^2+5 x^3\right)t^6+\left(73+188 x+124 x^2+39 x^3+5 x^4\right)t^7+\\
&& (146+523x+500x^2+207x^3+49x^4+5x^5)t^8 +\\
&&(293+1387x+1795x^2+1013x^3+310x^4+59x^5 +5x^6)t^9 +
\cdots. 
\end{eqnarray*}

In this case, the sequence $(Q_{n,132}^{(0,0,3,0)}(0))_{n \geq 0}$ 
whose generating function $Q_{132}^{(0,0,3,0)}(t,0) = \frac{1}{1-t(1+t+2t^2)}$ 
is A077947 in the OEIS, which also counts the number of sequences of 
codewords of total length $n$ from the code $C=\{0,10,110,111\}$. 
For example, for $n=3$, there are five sequences of 
length 3 that are in $\{0,10,110,111\}^*$, namely, 
000,010,100,110, and 111. 
The basic idea of a combinatorial explanation of this fact is not that difficult to present. Indeed, a permutation avoiding the patterns 132 and $MMP(0,0,3,0)$ is such that to the left of $n$, the largest element, one can either have no elements, one element ($n-1$), two elements in increasing order $(n-2)(n-1)$, or two elements in decreasing order $(n-1)(n-2)$. We can then recursively build the codeword corresponding to the permutation beginning with, say, 0, 10, 110 and 111, respectively, corresponding to the four cases; one then applies the same map to the subpermutation to the right of $n$. 

The sequence $(Q_{n,132}^{(0,0,3,0)}(x)|_x)_{n \geq 4}$, whose initial 
terms are $5,19,61,188,532,1387, \ldots $ 
does not appear in the OEIS.

We have computed that 
\begin{eqnarray*}
&&Q_{132}^{(0,0,4,0)}(t,x) = 1+t+2 t^2+5 t^3+ 14t^4 + (28 +14x)t^5 + (62+56x+14x^2)t^6 + \\
&&(143+188x+84x^2+14x^3)t^7 + (331+603x+307x^2+112x^3+14x^4)t^8+\\
&&(738+1907x+1455x^2+608x^3+140x^4+14x^5)t^9 + \cdots. 
\end{eqnarray*}

Here, neither the  sequence $(Q_{n,132}^{(0,0,4,0)}(0))_{n \geq 1}$,   
whose 
generating function is $Q^{(0,4,0,0)}_{132}(t,0) =
\frac{1}{1-t(1+t+2t^2+5t^3)}$, nor  
the sequence $(Q_{n,132}^{(0,0,4,0)}(x)|_x)_{n \geq 5}$   
appear in the OEIS.

Unlike the situation with the generating functions 
$Q_{n,132}^{(k,0,0,0)}(t,x)$, there does not seem to be 
any simple way to extract a simple formula for 
$Q_{n,132}^{(0,0,k,0)}(t,x)|_x$ from (\ref{gf00k0}).

\section{The functions $Q_{132}^{(0,k,0,0)}(t,x)=Q_{132}^{(0,0,0,k)}(t,x)$}

In this section, we shall compute the generating functions 
$Q_{132}^{(0,k,0,0)}(t,x)$ and $Q_{132}^{(0,0,0,k)}(t,x)$ for $k \geq 1$. 
These two generating functions are equal, since 
it follows from Lemma \ref{sym} that 
$Q_{n,132}^{(0,k,0,0)}(x)=Q_{n,132}^{(0,0,0,k)}(x)$ for all $k,n \geq 1$. 
Thus, in this section, we shall only consider the 
generating functions $Q_{132}^{(0,k,0,0)}(t,x)$.

First let $k=1$.  It is easy to see that $A_i(\sg)$ will contribute $\mmp^{(0,1,0,0)}(\red[A_i(\sg)])$ to $\mmp^{(0,1,0,0)}(\sg)$, since 
neither $n$ nor any of the elements to the right of $n$ have 
any effect on whether an element in  $A_i(\sg)$ matches 
the  pattern $MMP(0,1,0,0)$ in $\sg$. Similarly, $B_i(\sg)$ will contribute $n-i$ to $\mmp^{(0,1,0,0)}(\sg)$, since 
the presence of $n$ to the left of these elements guarantees  
that they all match the pattern $MMP(0,1,0,0)$ in $\sg$.
Note that $n$ does not  match the pattern $MMP(0,1,0,0)$ in $\sg$. 
It follows that 
\begin{equation}\label{132-0100rec}
Q_{n,132}^{(0,1,0,0)}(x) = \sum_{i=1}^n  
Q_{i-1,132}^{(0,1,0,0)}(x) C_{n-i}x^{n-i}.
\end{equation}
Multiplying both sides of (\ref{132-0100rec}) by $t^n$ and summing 
for $n \geq 1$ will show 
that 
\begin{equation*}\label{132-0100rec3}
-1+Q_{132}^{(0,1,0,0)}(t,x) =t Q_{132}^{(0,1,0,0)}(t,x) \ C(tx).
\end{equation*}
Thus, 
\begin{equation*}
Q_{132}^{(0,1,0,0)}(t,x) = \frac{1}{1-tC(tx)},
\end{equation*}
which is the same as the generating function for 
$Q_{132}^{(1,0,0,0)}(t,x)$. 

Next we consider the case $k > 1$. Again, it is easy to see that $A_i(\sg)$ will contribute $\mmp^{(0,k,0,0)}(\red[A_i(\sg)])$ to $\mmp^{(0,k,0,0)}(\sg)$, since 
neither $n$ nor any of the elements to the right of $n$ have 
any effect on whether an element in  $A_i(\sg)$ matches 
the  pattern $MMP(0,k,0,0)$ in $\sg$. Now if $i \geq k$, then $B_i(\sg)$ will contribute $C_{n-i}x^{n-i}$ to $\mmp^{(0,k,0,0)}(\sg)$, since 
the presence of $n$ and the elements of $A_i(\sg)$ guarantee  
that the elements of $B_i(\sg)$ all match the pattern $MMP(0,k,0,0)$ in $\sg$. However, if 
$i < k$, then $B_i(\sg)$ will contribute $\mmp^{(0,k-i,0,0)}(\red[B_i(\sg)])$ to $\mmp^{(0,k,0,0)}(\sg)$, since 
the presence of $n$ and the elements of $A_i(\sg)$ to the left of $n$ guarantees that the elements of $B_i(\sg)$ match the pattern $MMP(0,k,0,0)$ in $\sg$ if and only if 
they match the pattern $MMP(0,k-i,0,0)$ in $B_i(\sg)$. 
Note that $n$ does not match the pattern $MMP(0,k,0,0)$ for any $k \geq 1$.
It follows that 
\begin{eqnarray}\label{132-0k00rec}
Q_{n,132}^{(0,k,0,0)}(x) &=& \sum_{i=1}^{k-1}  
Q_{i-1,132}^{(0,k,0,0)}(x) Q_{n-i,132}^{(0,k-i,0,0)}(x) +
\sum_{i=k}^{n}  
Q_{i-1,132}^{(0,k,0,0)}(x) C_{n-i}x^{n-i} \nonumber \\
&=&\sum_{i=1}^{k-1}  
C_{i-1} Q_{n-i,132}^{(0,k-i,0,0)}(x) +
\sum_{i=k}^{n}  
Q_{i-1,132}^{(0,k,0,0)}(x) C_{n-i}x^{n-i}.
\end{eqnarray}
Here the last equation follows from the fact that 
$Q_{i-1,132}^{(0,k,0,0)}(x) = C_{i-1}$ if $i \leq k-1$. 
Multiplying both sides of (\ref{132-0k00rec}) by $t^n$ and summing 
for $n \geq 1$ will show 
that 
\begin{multline*}\label{132-0100rec3-}
-1+Q_{132}^{(0,k,0,0)}(t,x) = \\  t \sum_{i=1}^{k-1} C_{i-1}t^{i-1} 
Q_{132}^{(0,k-i,0,0)}(t,x) + t C(tx)(Q_{132}^{(0,k,0,0)}(t,x) -(C_0 + 
C_1t+ \cdots + C_{k-2}t^{k-2})).
\end{multline*}
Thus, we have the following theorem. 
\begin{theorem}\label{thm:Q0k00}
\begin{equation}\label{Q0100}
Q_{132}^{(0,1,0,0)}(t,x) = \frac{1}{1-tC(tx)}.
\end{equation}
For $k > 1$, 
\begin{equation}\label{Q0100-}
Q_{132}^{(0,k,0,0)}(t,x) = \frac{1+t\sum_{j=0}^{k-2} C_j t^j
(Q_{132}^{(0,k-1-j,0,0)}(t,x) -C(tx))}{1-tC(tx)}
\end{equation}
and 
\begin{equation}\label{x=0Q0100-}
Q_{132}^{(0,k,0,0)}(t,0) = \frac{1+t\sum_{j=0}^{k-2} C_j t^j
(Q_{132}^{(0,k-1-j,0,0)}(t,0) -1)}{1-t}.
\end{equation}
\end{theorem}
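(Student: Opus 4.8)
The plan is to treat all three identities as consequences of the generating-function equations already assembled immediately before the statement, so that the proof becomes a matter of solving those functional equations rather than supplying any new combinatorial input. For $k=1$, the relation $-1 + Q_{132}^{(0,1,0,0)}(t,x) = t\,Q_{132}^{(0,1,0,0)}(t,x)\,C(tx)$ has already been derived from (\ref{132-0100rec}); I would simply collect the two occurrences of $Q_{132}^{(0,1,0,0)}(t,x)$, factor out $\left(1 - tC(tx)\right)$, and divide to obtain (\ref{Q0100}).

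For $k>1$, I would start from the summed form of (\ref{132-0k00rec})---the displayed multiline equation just above the theorem---and solve it for $Q_{132}^{(0,k,0,0)}(t,x)$. The key manipulation is to move the term $t\,C(tx)\,Q_{132}^{(0,k,0,0)}(t,x)$ to the left-hand side, producing the common factor $1 - tC(tx)$, and then to reindex the first sum by setting $j = i-1$, so that its range $0 \le j \le k-2$ coincides with that of the subtracted polynomial $C_0 + C_1 t + \cdots + C_{k-2}t^{k-2} = \sum_{j=0}^{k-2} C_j t^j$. Under this substitution the first sum becomes $t\sum_{j=0}^{k-2} C_j t^j\, Q_{132}^{(0,k-1-j,0,0)}(t,x)$, and once the two sums share an index the numerator collapses to $1 + t\sum_{j=0}^{k-2} C_j t^j\bigl(Q_{132}^{(0,k-1-j,0,0)}(t,x) - C(tx)\bigr)$. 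Dividing by $1 - tC(tx)$ then yields (\ref{Q0100-}).

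Finally, for (\ref{x=0Q0100-}) I would specialize (\ref{Q0100-}) at $x=0$. Since $C(tx)\big|_{x=0} = C(0) = C_0 = 1$, the factor $1 - tC(tx)$ becomes $1-t$ and every $C(tx)$ appearing in the numerator is replaced by $1$, which is precisely the asserted formula. (Equivalently, one could set $x=0$ directly in the summed recurrence and solve, arriving at the same expression.)

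I do not expect any genuine obstacle here: the conceptual heart of the argument---the structural decomposition of $\sg \in S_n^{(i)}(132)$ into the blocks $A_i(\sg)$ and $B_i(\sg)$, together with the case split $i \ge k$ versus $i < k$ that governs whether $B_i(\sg)$ contributes $C_{n-i}x^{n-i}$ or $\mmp^{(0,k-i,0,0)}(\red[B_i(\sg)])$---is already carried out in establishing (\ref{132-0k00rec}). The one point that warrants care is the bookkeeping in the numerator: confirming that the boundary correction $-t\,C(tx)\sum_{j=0}^{k-2} C_j t^j$, which appears because the elements of $B_i(\sg)$ match automatically only once $i \ge k$, recombines exactly with the reindexed first sum so that no stray terms survive. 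Everything beyond this is formal power series manipulation.
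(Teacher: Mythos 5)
Your proposal is correct and follows essentially the same route as the paper: the theorem there is likewise presented as an immediate consequence of the summed functional equations derived just before it, with the identical steps of isolating the factor $1-tC(tx)$, reindexing the first sum via $j=i-1$ so it recombines with the subtracted polynomial $\sum_{j=0}^{k-2}C_jt^j$, and specializing at $x=0$ using $C(tx)\big|_{x=0}=C_0=1$. No gap remains.
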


\subsection{Explicit formulas for  $Q^{(0,k,0,0)}_{n,132}(x)|_{x^r}$}

Note that Theorem \ref{thm:Q0k00} gives us a simple recursion for the  
generating functions for the constant terms 
in $Q_{n,132}^{(0,k,0, 0)}(x)$.
For example,  one can compute that 
\begin{eqnarray*}
Q_{132}^{(0,1,0,0)}(t,0) &=& \frac{1}{(1-t)};\\
Q_{132}^{(0,2,0,0)}(t,0) &=& \frac{1-t+t^2}{(1-t)^2};\\
Q_{132}^{(0,3,0,0)}(t,0) &=& \frac{1-2t+2t^2+t^3-t^4}{(1-t)^3};\\
Q_{132}^{(0,4,0,0)}(t,0) &=& \frac{1-3t+4t^2-t^3+3t^4-5t^5+2t^6}{(1-t)^4}, \ \mbox{and}\\
Q_{132}^{(0,5,0,0)}(t,0) &=& \frac{1-4t+7t^2-5t^3+4t^4+6t^5-21t^6+18t^7-5t^8}{(1-t)^5}.
\end{eqnarray*}

We can explain the highest coefficient of $x$ and the second 
highest coefficient of $x$ 
in $Q^{(0,k,0,0)}_{n,132}(x)$ for any $k \geq 1$. 
\begin{theorem} \ \\
\begin{enumerate}
\item For all $k \geq 1$ and $n \geq k$, the highest 
power of $x$ that occurs in $Q^{(0,k,0,0)}_{n,132}(x)$ is $x^{n-k}$, 
with
$Q^{(0,k,0,0)}_{n,132}(x)|_{x^{n-k}} = C_kC_{n-k}.$

\item For all $k \geq 1$ and $n \geq k+1$, 
$Q^{(0,k,0,0)}_{n,132}(x)|_{x^{n-k-1}} = a_kC_{n-k}$ 
where $a_1 =1$ and for $k \geq 2$, 
$a_k = C_k + \sum_{i=1}^{k-1} C_{i-1}a_{k-i}$.
\end{enumerate}
\end{theorem}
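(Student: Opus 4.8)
The plan is to induct on $n$ and read off the two coefficients directly from the recursion (\ref{132-0k00rec}), using the Catalan convolution identity $\sum_{j=0}^{m} C_j C_{m-j} = C_{m+1}$ at each step. I would treat both parts in a single induction: in the step for a given $n$, I first establish (1) for every $k \le n$ using only smaller values of the first index, and then establish (2) for every $k \le n-1$, which needs the leading coefficients supplied by (1). The base cases are immediate, since when $m \le k$ no position $i \le k$ can have $k$ larger entries to its left, so $\mmp^{(0,k,0,0)}(\sg)=0$ for every $\sg \in S_m(132)$ and hence $Q^{(0,k,0,0)}_{m,132}(x)=C_m$; in particular $Q^{(0,k,0,0)}_{k,132}(x)=C_k=C_kC_0$.

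For (1), I would extract the coefficient of $x^{n-k}$ from (\ref{132-0k00rec}). By the inductive hypothesis for (1), $Q^{(0,k-i,0,0)}_{n-i,132}(x)$ has degree $(n-i)-(k-i)=n-k$ with leading coefficient $C_{k-i}C_{n-k}$, so each term of the first sum contributes $C_{i-1}C_{k-i}C_{n-k}$. In the second sum only the term $i=k$ reaches this degree (for $i\ge k+1$ the factor $Q^{(0,k,0,0)}_{i-1,132}(x)\,x^{n-i}$ has degree $n-1-k$), and it contributes $C_{k-1}C_{n-k}$ because $Q^{(0,k,0,0)}_{k-1,132}(x)=C_{k-1}$. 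Thus the coefficient of $x^{n-k}$ equals $C_{n-k}\bigl(\sum_{i=1}^{k-1}C_{i-1}C_{k-i}+C_{k-1}\bigr)$, and the convolution identity gives $\sum_{i=1}^{k-1}C_{i-1}C_{k-i}=\sum_{j=0}^{k-2}C_jC_{k-1-j}=C_k-C_{k-1}$, so the bracket collapses to $C_k$, yielding $C_kC_{n-k}$. Positivity of this leading coefficient simultaneously certifies that $x^{n-k}$ is genuinely the top power.

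For (2), I would extract the coefficient of $x^{n-k-1}$ in the same fashion. In the first sum, $Q^{(0,k-i,0,0)}_{n-i,132}(x)$ has degree $n-k$, so its coefficient of $x^{n-k-1}$ is the second-highest one, equal by the inductive hypothesis for (2) to $a_{k-i}C_{n-k}$; this sum contributes $C_{n-k}\sum_{i=1}^{k-1}C_{i-1}a_{k-i}$. In the second sum, for $i\ge k+1$ the coefficient of $x^{n-k-1}$ in $Q^{(0,k,0,0)}_{i-1,132}(x)\,x^{n-i}$ is the coefficient of $x^{i-k-1}$ in $Q^{(0,k,0,0)}_{i-1,132}(x)$, namely its leading coefficient $C_kC_{i-k-1}$ from part (1) (the $i=k$ term contributes nothing, being a pure monomial of degree $n-k$). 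Summing and reindexing $j=i-k-1$ gives $C_k\sum_{j=0}^{n-k-1}C_jC_{n-k-1-j}=C_kC_{n-k}$ by the convolution identity again. Adding the two contributions yields $C_{n-k}\bigl(C_k+\sum_{i=1}^{k-1}C_{i-1}a_{k-i}\bigr)=a_kC_{n-k}$ by the defining recurrence for $a_k$; the case $k=1$ has an empty first sum and reduces directly to $C_1C_{n-1}=C_{n-1}=a_1C_{n-1}$.

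I do not expect a serious obstacle: the argument is disciplined bookkeeping of which summands in (\ref{132-0k00rec}) reach the top or second-from-top degree, with the only nonroutine inputs being the two applications of the Catalan convolution. The points that require care are the interleaving of the induction — ensuring that (2) at index $n$ invokes (1) only at strictly smaller first indices $i-1\le n-1$, so no circularity arises — and the treatment of the boundary term $i=k$, where the truncation $Q^{(0,k,0,0)}_{k-1,132}(x)=C_{k-1}$ rather than a genuine $\mmp$-polynomial is exactly what produces the extra $C_{k-1}$ in part (1) and the clean collapse to $C_k$.
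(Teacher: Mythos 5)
Your proposal is correct, and for part (2) it is essentially the paper's proof: the paper also extracts the coefficient of $x^{n-k-1}$ from the recursion (\ref{132-0k00rec}), gets $C_{n-k}\sum_{i=1}^{k-1}C_{i-1}a_{k-i}$ from the first sum via the inductive hypothesis, notes the $i=k$ term vanishes, and collapses $\sum_{i=k+1}^{n}C_{n-i}C_{i-1-k}$ to $C_{n-k}$ by the Catalan convolution, exactly as you do (the paper organizes this as strong induction on $k$, uniform in $n$, rather than your interleaved induction on $n$, but both orderings are sound). Where you genuinely diverge is part (1): the paper proves it with no recursion at all, by directly characterizing the extremal permutations --- an element matching $MMP(0,k,0,0)$ needs $k$ larger elements to its left, so at most $n-k$ elements can match, and the maximum is attained exactly by a $132$-avoiding arrangement of $n,n-1,\dots,n-k+1$ followed by a $132$-avoiding arrangement of $1,\dots,n-k$, giving $C_kC_{n-k}$ at once. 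Your route instead re-derives the leading coefficient from (\ref{132-0k00rec}) by a second convolution, $\sum_{j=0}^{k-2}C_jC_{k-1-j}=C_k-C_{k-1}$, which then cancels against the boundary contribution $C_{k-1}C_{n-k}$ from $i=k$; this computation is correct, including the degree bookkeeping showing the $i\geq k+1$ terms only reach degree $n-k-1$. The trade-off: your treatment is more uniform and mechanical (both coefficients fall out of the same machine, and positivity of the computed leading coefficient certifies the degree claim without a separate argument), while the paper's part (1) is shorter and yields the combinatorial meaning of the coefficient $C_kC_{n-k}$ --- an explicit description of which permutations achieve the maximum --- which the purely algebraic extraction does not provide.
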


\begin{proof}
For (1), it is easy to see that to obtain 
the largest number of $MMP(0,k,0,0)$-matches for  
a permutation $\sg \in S_n(132)$, we need only to arrange 
the largest $k$ elements $n,n-1, \ldots,n-k+1$ such that 
they avoid $132$, followed by the elements $1, \ldots, n-k$ under the same condition. Thus, the highest 
power of $x$ that occurs in $Q^{(0,k,0,0)}_{n,132}(x)$ is $x^{n-k}$, and its coefficient 
is $C_kC_{n-k}$. 

For (2), we know that $Q^{(0,1,0,0)}_{n,132}(x) = Q^{(1,0,0,0)}_{n,132}(x)$ 
and we have proved that for $n \geq 2$ 
$Q^{(1,0,0,0)}_{n,132}(x)|_{x^{n-2}} =C_{n-1}$.  Thus $a_1 =1$. 

Next assume by induction that for $j =1, \ldots, k-1$, 
$$Q^{(j,0,0,0)}_{n,132}(x)|_{x^{n-j-1}} =a_jC_{n-j} \ \mbox{for } n \geq j+1$$  where $a_j$ is a positive integer. Then by the recursion  
(\ref{132-0k00rec}), we know that 
\begin{eqnarray*}
Q_{n,132}^{(0,k,0,0)}(x)|_{x^{n-k-1}} &=& \sum_{i=1}^{k-1}  
C_{i-1} Q_{n-i,132}^{(0,k-i,0,0)}(x)|_{x^{n-k-1}} +
\sum_{i=k}^{n}  
\left(Q_{i-1,132}^{(0,k,0,0)}(x) C_{n-i}x^{n-i}\right)|_{x^{n-k-1}}\\
&=& \sum_{i=1}^{k-1}  
C_{i-1} Q_{n-i,132}^{(0,k-i,0,0)}(x)|_{x^{n-k-1}} +
\sum_{i=k}^{n}  
C_{n-i}Q_{i-1,132}^{(0,k,0,0)}(x)|_{x^{i-k-1}}.
\end{eqnarray*}
By induction, we know that for $i=1, \ldots, k-1$,
$$Q_{n-i,132}^{(0,k-i,0,0)}(x)|_{x^{n-k-1}} =a_{k-i}C_{n-k} \ \mbox{for } 
n-i \geq k-i +1$$
and 
for $i =k+1, \ldots ,n$, 
$$Q_{i-1,132}^{(0,k,0,0)}(x)|_{x^{i-k-1}} =C_kC_{i-1-k} \ \mbox{for } 
i-1 \geq k.$$
Moreover, it is clear that for $i=k$, 
$Q_{k-1,132}^{(0,k,0,0)}(x)|_{x^{k-k-1}} =0$. Thus we have that 
for all $n \geq k+1$, 
\begin{eqnarray*}
Q_{n,132}^{(0,k,0,0)}(x)|_{x^{n-k-1}}&=& \sum_{i=1}^{k-1}  
C_{i-1} a_{k-i}C_{n-k} + \sum_{i=k+1}^{n}  
C_{n-i}C_kC_{i-1-k} \\
&=& C_{n-k} \left(\sum_{i=1}^{k-1}  
C_{i-1} a_{k-i}\right) + C_k \sum_{i=k+1}^{n}  
C_{n-i}C_{i-1-k} \\
&=& C_{n-k} \left(\sum_{i=1}^{k-1}  
C_{i-1} a_{k-i}\right) + C_k C_{n-k} = C_{n-k}\left(C_k +\sum_{i=1}^{k-1}  
C_{i-1} a_{k-i}\right). 
\end{eqnarray*}
Thus for $n \geq k+1$, $Q_{n,132}^{(0,k,0,0)}(x)|_{x^{n-k-1}} = a_k C_{n-k}$ 
where $a_k = C_k +\sum_{i=1}^{k-1} C_{i-1} a_{k-i}$. 
\end{proof}

For example, 
\begin{eqnarray*}
a_2 &=& C_2 + C_0a_1 = 2+1 =3,\\
a_3 &=& C_3 + C_0a_2 + C_1a_1= 5+3+1 =9, \ \mbox{and} \\
a_4 &=& C_4 + C_0a_3 + C_1a_2+C_2a_1= 14+9+3+2 =28  
\end{eqnarray*}
which agrees with the series for 
$Q_{132}^{(0,2,0,0)}(t,x)$, $Q_{132}^{(0,3,0,0)}(t,x)$, 
and $Q_{132}^{(0,4,0,0)}(t,x)$ which we give below.

Again we can use Mathematica to compute the first 
few terms of the generating function $Q_{132}^{(0,k,0,0)}(t,x)$ for 
small $k$. 
Since $Q_{132}^{(0,1,0,0)}(t,x) =Q_{132}^{(1,0,0,0)}(t,x)$, 
we will not list that generating function again. 

We have computed that 
\begin{eqnarray*}
&&Q_{132}^{(0,2,0,0)}(t,x) = 1+t+2 t^2+(3+2x) t^3+\left(4+6x+4x^2\right) t^4+\\
&&\left(5+12x+15x^2+10x^3\right) t^5+
\left(6+20x+36x^2+42x^3+28x^4\right) t^6+\\
&&\left(7+30x+70x^2+112x^3+126x^4+84x^5\right) t^7+\\
&&\left(8+42x+120x^2+240x^3+360x^4+396x^5+264x^6\right) t^8+\\
&&\left(9+56x+189x^2+450x^3+825x^4+1188x^5+1287x^6+858x^7\right) 
t^9+ \cdots. 
\end{eqnarray*}

The only permutations  
$\sg \in S_n(132)$ such that $\mmp^{(0,2,0,0)}(\sg) =0$ 
are the identity permutation plus all the adjacent transpositions 
$$(i,i+1)= 12 \cdots (i-1) (i+1) i (i+2) \cdots n,$$ 
which explains 
why $Q^{(0,2,0,0)}_{n,132}(0) =n$ for all $n \geq 1$. This is a known result~\cite[Table 6.1]{kit}, since avoiding $MMP(0,2,0,0)$ is equivalent to avoiding simultaneously the classical patterns $321$ and $231$. Hence in this case, 
we are dealing  with the 
simultaneous avoidance of the patterns $132$, $321$ and $231$.

We claim that $(Q^{(0,2,0,0)}_{n,132}(x)|_x =(n-1)(n-2)$ for 
all $n \geq 3$.  This can be easily proved by induction. 
That is, $Q^{(0,2,0,0)}_{3,132}(x)|_x =2$, so our 
formula holds for $n =3$.  Now suppose that $n \geq 4$ 
and $\sg = \sg_1 \cdots \sg_n \in S_n(132)$ is such that  
$\mmp^{(0,2,0,0)}(\sg) = 1$. We claim there are only three possibilities 
for the position of $n$ in $\sg$. That is, 
it cannot be that $\sg_i =n$ for $2 \leq i \leq n-2$, since 
then both $\sg_n$ and $\sg_{n-1}$ would match $MMP(0,2,0,0)$ in 
$\sg$. Thus, it must be the case that
$\sg_n =n$, $\sg_{n-1} =n$, or $\sg_1 =n$. Clearly, if 
$\sg_n =n$, then we must have 
that $\mmp^{(0,2,0,0)}(\sg_1 \cdots \sg_{n-1}) =1$, so there 
are $(n-2)(n-3)$ choices of $\sg_1 \cdots \sg_{n-1}$ by induction. 
If $\sg_{n-1} =n$, then $\sg_n =1$, so $\sg_n$ will 
match $MMP(0,2,0,0)$ in $\sg$. Thus, it must be the case 
that $\mmp^{(0,2,0,0)}(\red[\sg_1 \cdots \sg_{n-2}]) =0$, which means 
that we have $n-2$ choices for $\sg_1 \cdots \sg_{n-2}$ in this 
case. Finally, if $\sg_1 =n$, then we must have 
that $\mmp^{(0,1,0,0)}(\red[\sg_2 \cdots \sg_n]) =1$. 
Using the fact that $Q^{(0,1,0,0)}_{132}(t,x) = Q^{(1,0,0,0)}_{132}(t,x)$ and 
that  $Q^{(1,0,0,0)}_{n,132}(x)|_x = n-1$, it follows that there 
are $n-2$ choices for $\sg_2 \cdots \sg_n$ in this case. 
Thus, it follows that 
$$Q^{(0,2,0,0)}_{n,132}(x)|_x = (n-2)(n-3) + 2(n-2) = (n-1)(n-2).$$

The sequence $(Q^{(0,2,0,0)}_{n,132}(x)|_{x^{n-3}})_{n \geq 3}$ 
is sequence A120589 in the OEIS  which has no listed 
combinatorial interpretation so that we have give a combinatorial 
interpretation to this sequence.

We have computed that 
\begin{align*}
&Q_{132}^{(0,3,0,0)}(t,x) = 1+t+2 t^2+5 t^3+(9+5x) t^4+\left(14+18x+10x^2)\right) t^5+\ \ \ \ \ \ \ \ \ \ \ \\
&\left(20+42x+45x^2+25x^3\right) t^6+\left(27+80x+126x^2+126x^3+70x^4\right) t^7+\\
&\left(35+135x+280x^2+392x^3+378x^4+210x^5\right) t^8+\\
&\left(44+210x+540x^2+960x^3+1260x^4+1088x^5+660x^6\right) t^9
+ \cdots
\end{align*}
and 
\begin{align*}
&Q_{132}^{(0,4,0,0)}(t,x) =1+t+2 t^2+5 t^3+14 t^4+(28+14x) t^5+\left(48+56x+28x^2\right) t^6+\\
&\left(75+144x+140x^2+70x^3\right) t^7+\left(110+300x+432x^2+392x^3+196x^4 \right) t^8+\\
&\left(154+550x+1050x^2+1344x^3+1176x^4+588x^5\right) t^9+ \cdots.
\end{align*}

The sequences $(Q^{(0,3,0,0)}_{n,132}(0))_{n \geq 1}$, 
$(Q^{(0,3,0,0)}_{n,132}(x)|_{x})_{n \geq 4}$,  
$(Q^{(0,4,0,0)}_{n,132}(0))_{n \geq 1}$, and \\ 
$(Q^{(0,4,0,0)}_{n,132}(x)|_x)_{n \geq 5}$ do  not appear 
in the OEIS.

We have now considered all the possibilities for 
$Q_{132}^{(a,b,c,d)}(t,x)$ for $a,b,c,d \in \mathbb{N}$ where 
all but one of the parameters $a,b,c,d$ are zero. 
There are several alternatives for further study. 
One is to consider $Q_{132}^{(a,b,c,d)}(t,x)$ for $a,b,c,d \in \mathbb{N}$ 
where at least two of the parameters $a,b,c,d$ are non-zero.  This 
will be the subject of \cite{kitremtie}. 
A second alternative 
is to allow some of the parameters to be equal to $\emptyset$. In the 
next two sections, we shall give two simple examples of this 
type of alternative.

\section{The function $Q_{132}^{(k,0,\emptyset,0)}(t,x)$}

 In this section, 
we shall consider the generating function 
$Q_{132}^{(k,0,\emptyset,0)}(t,x)$, where  $k \in \mathbb{N} \cup \{\emptyset\}$. 
Given a permutation $\sg = \sg_1 \cdots \sg_n \in S_n$, 
we say that 
$\sg_j$ is a \emph{right-to-left maximum}  (\emph{left-to-right minimum})
of $\sg$ if $\sg_j > \sg_i$ for 
all $i>j$  ($\sg_j < \sg_i$ for 
all $i<j$). We let $\RLmax[\sg]$ denote the number of 
right-to-left maxima of~$\sg$ and $\LRmin(\sg)$ denote the number of 
left-to-right minima of~$\sg$. One can view 
the pattern $MMP(k,0,\emptyset,0)$ as a generalization of the number of left-to-right minima statistic (which corresponds to the case $k=0$).

First we compute the generating function for 
$Q_{n,132}^{(\emptyset,0,\emptyset,0)}(x)$, which corresponds to the elements that are both left-to-right minima and right-to-left maxima. Consider 
the permutations $\sg \in S_n(132)$ where $\sg_1 =n$. 
Clearly such permutations contribute 
$x Q_{n-1,132}^{(\emptyset,0,\emptyset,0)}(x)$ to 
$Q_{n,132}^{(\emptyset,0,\emptyset,0)}(x)$. 
For $i > 1$,  it is easy to see that $A_i(\sg)$ will contribute nothing to $\mmp^{(\emptyset,0,\emptyset,0)}(\sg)$, since 
the presence of $n$ to the right of these elements 
ensures that no point in $A_i(\sg)$ matches 
the  pattern $MMP(\emptyset,0,\emptyset,0)$. Similarly, $B_i(\sg)$ will contribute $\mmp^{(\emptyset,0,\emptyset,0)}(\red[B_i(\sg)])$ to 
$\mmp^{(\emptyset,0,\emptyset,0)}(\sg)$, since 
neither $n$ nor any of the elements to the left of $n$ have 
any effect on whether an element in  $B_i(\sg)$ matches 
the  pattern $MMP(\emptyset,0,\emptyset,0)$ in $\sg$. 
Thus, 
\begin{equation}\label{e0e0rec}
 Q_{n,132}^{(\emptyset,0,\emptyset,0)}(x) = 
x Q_{n-1,132}^{(\emptyset,0,\emptyset,0)}(x) + 
\sum_{i=2}^n C_{i-1}
Q_{n-i,132}^{(\emptyset,0,\emptyset,0)}(x).
\end{equation}
Multiplying both sides of (\ref{e0e0rec}) by $t^n$ and summing 
over all $n \geq 1$, we obtain that 
\begin{equation*}\label{e0e0rec2}
-1+Q_{132}^{(\emptyset,0,\emptyset,0)}(t,x) = 
txQ_{132}^{(\emptyset,0,\emptyset,0)}(t,x) +t Q_{132}^{(\emptyset,0,\emptyset,0)}(t,x)\ 
(C(t)-1).
\end{equation*}
Thus, we have the following theorem. 
\begin{theorem}\label{thm:Q0e0e}
\begin{equation}\label{00e0gf}
Q_{132}^{(\emptyset,0,\emptyset,0)}(t,x) = 
\frac{1}{1-tx +t -tC(t)}
\end{equation}
and 
\begin{equation}\label{00e0gf-}
Q_{132}^{(\emptyset,0,\emptyset,0)}(t,0) = 
\frac{1}{1+t -tC(t)}.
\end{equation}
\end{theorem}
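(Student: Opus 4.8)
The plan is to read the displayed functional equation that immediately precedes the theorem as a \emph{linear} equation in the single unknown $Q_{132}^{(\emptyset,0,\emptyset,0)}(t,x)$ and to solve it outright. Abbreviating $Q = Q_{132}^{(\emptyset,0,\emptyset,0)}(t,x)$, that equation reads
$$-1 + Q = txQ + tQ\bigl(C(t)-1\bigr).$$
The crucial structural observation is that $Q$ occurs here only to the first power. This contrasts with the earlier case of $Q_{132}^{(0,0,k,0)}(t,x)$, where both convolution factors carried the statistic-variable $x$ and one was led to the quadratic (\ref{132-00k0rec4}); here, because the $\emptyset$ in the third coordinate forces every point of $A_i(\sg)$ to contribute nothing to the pattern count, the left factor in the convolution (\ref{e0e0rec}) is the plain Catalan sequence $(C_{i-1})$ rather than another $x$-weighted copy of $Q$. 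Consequently no square root appears and the answer will be a rational function.

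First I would collect all terms containing $Q$ on one side, obtaining
$$Q\bigl(1 - tx - t(C(t)-1)\bigr) = 1.$$
Expanding the coefficient via $-t(C(t)-1) = t - tC(t)$ rewrites it as $1 - tx + t - tC(t)$, and dividing through yields
$$Q_{132}^{(\emptyset,0,\emptyset,0)}(t,x) = \frac{1}{1-tx+t-tC(t)},$$
which is exactly (\ref{00e0gf}). Setting $x=0$ annihilates the $tx$ summand and delivers (\ref{00e0gf-}) with no further work.

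I do not anticipate any genuine obstacle here: the combinatorial content — the classification of $\sg \in S_n(132)$ by the position of $n$, the observation that $A_i(\sg)$ contributes nothing while $B_i(\sg)$ contributes a shifted copy of the same statistic, and the passage from the recursion (\ref{e0e0rec}) to the functional equation by multiplying by $t^n$ and summing over $n \geq 1$ — is already completed above the statement. What remains is a one-line algebraic solve, so the ``hard part'' is really just confirming that the coefficient of $Q$ simplifies correctly to $1 - tx + t - tC(t)$. As optional corroboration one could expand (\ref{00e0gf}) to a few orders in $t$ and check the coefficients against a direct count of the elements of $S_n(132)$ that are simultaneously right-to-left maxima and left-to-right minima, but this lies outside the proof proper.
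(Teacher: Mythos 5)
Your proposal is correct and follows exactly the paper's own route: the paper derives the recursion (\ref{e0e0rec}) by classifying $\sg \in S_n(132)$ according to the position of $n$, converts it to the linear functional equation $-1+Q = txQ + tQ(C(t)-1)$, and solves for $Q$ to obtain (\ref{00e0gf}), with (\ref{00e0gf-}) following by setting $x=0$. Your algebra, including the simplification of the coefficient of $Q$ to $1-tx+t-tC(t)$, matches the paper's (implicit) computation step for step, and your structural remark about why the equation is linear rather than quadratic is accurate.
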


Next we compute the generating function for 
$Q_{n,132}^{(0,0,\emptyset,0)}(x)$. First consider 
the permutations $\sg \in S_n^{(1)}(132)$. Clearly such 
permutations contribute 
$x Q_{n-1,132}^{(0,0,\emptyset,0)}(x)$ to $Q_{n,132}^{(0,0,\emptyset,0)}(x)$. 
For $i > 1$,  it is easy to see that $A_i(\sg)$ will contribute $\mmp^{(0,0,\emptyset,0)}(\red[A_i(\sg)])$ to 
$\mmp^{(0,0,\emptyset,0)}(\sg)$, since 
neither $n$ nor any of the elements to the right of $n$ have 
any effect on whether an element in  $A_i(\sg)$ matches 
the  pattern $MMP(0,0,\emptyset,0)$ in $\sg$. Similarly, $B_i(\sg)$ will contribute $\mmp^{(0,0,\emptyset,0)}(\red[B_i(\sg)])$ to 
$\mmp^{(0,0,\emptyset,0)}(\sg)$,  since 
neither $n$ nor any of the elements to the left of $n$ have 
any effect on whether an element in  $B_i(\sg)$ matches 
the  pattern $MMP(0,0,\emptyset,0)$ in $\sg$. 
Thus, 
\begin{equation}\label{00e0rec}
 Q_{n,132}^{(0,0,\emptyset,0)}(x) = x Q_{n-1,132}^{(0,0,\emptyset,0)}(x) + 
\sum_{i=2}^n Q_{i-1,132}^{(0,0,\emptyset,0)}(x)
Q_{n-i,132}^{(0,0,\emptyset,0)}(x).
\end{equation}
Multiplying both sides of (\ref{00e0rec}) by $t^n$ and summing 
over all $n \geq 1$, we obtain that 
\begin{equation*}\label{00e0rec2}
-1+Q_{132}^{(0,0,\emptyset,0)}(t,x) = 
txQ_{132}^{(0,0,\emptyset,0)}(t,x) +t Q_{132}^{(0,0,\emptyset,0)}(t,x)\ (Q_{132}^{(0,0,\emptyset,0)}(t,x)-1),
\end{equation*}
so 
\begin{equation*}
0=1+Q_{132}^{(0,0,\emptyset,0)}(t,x)(tx-t-1) + t(Q_{132}^{(0,0,\emptyset,0)}(t,x))^2.
\end{equation*}
Thus, 
\begin{equation*}
Q_{132}^{(0,0,\emptyset,0)}(t,x) = 
\frac{(1+t-tx)-\sqrt{(1+t-tx)^2 -4t}}{2t}.
\end{equation*}

Next we compute a recursion for $Q_{n,132}^{(k,0,\emptyset,0)}(x)$, 
where $k \geq 1$. It is clear that $n$ can never match 
the pattern $MMP(k,0,\emptyset,0)$ for $k \geq 1$ in any 
$\sg \in S_n(132)$.    
For $i \geq 1$,  it is easy to see that $A_i(\sg)$ will contribute $\mmp^{(k-1,0,\emptyset,0)}(\red[A_i(\sg)])$ to 
$\mmp^{(k,0,\emptyset,0)}(\sg)$,  since 
none of the elements to the right of $n$ have 
any effect on whether an element in  $A_i(\sg)$ matches 
the  pattern $MMP(k,0,\emptyset,0)$ in $\sg$ and the presence of $n$ ensures 
that an element in $A_i(\sg)$ matches $MMP(k,0,\emptyset,0)$ in $\sg$ if 
and only if it matches $MMP(k-1,0,\emptyset,0)$ in $A_i(\sg)$. 
Similarly, $B_i(\sg)$ will contribute $\mmp^{(k,0,\emptyset,0)}(\red[B_i(\sg)])$ to $\mmp^{(k,0,\emptyset,0)}(\sg)$,
 since 
neither $n$ nor any of the elements to the left of $n$ have 
any effect on whether an element in  $B_i(\sg)$ matches 
the  pattern $MMP(k,0,\emptyset,0)$ in $\sg$. 
Thus, 
\begin{equation}\label{k0e0rec}
 Q_{n,132}^{(k,0,\emptyset,0)}(x) =  
\sum_{i=1}^n Q_{i-1,132}^{(k-1,0,\emptyset,0)}(x)
Q_{n-i,132}^{(k,0,\emptyset,0)}(x).
\end{equation}
Multiplying both sides of (\ref{k0e0rec}) by $t^n$ and summing 
over all $n \geq 1$, we obtain that 
\begin{equation*}\label{k0e0rec2}
-1+Q_{132}^{(0,0,\emptyset,0)}(t,x) = 
t Q_{132}^{(k-1,0,\emptyset,0)}(t,x)\ Q_{132}^{(k,0,\emptyset,0)}(t,x).
\end{equation*}
Thus, we have the following theorem. 
\begin{theorem}\label{thm:Qk0e0}
\begin{equation}\label{00e0gf--}
Q_{132}^{(0,0,\emptyset,0)}(t,x) = 
\frac{(1+t-tx)-\sqrt{(1+t-tx)^2 -4t}}{2t}.
\end{equation}
For $k \geq 1$, 
\begin{equation}\label{k0e0gf}
Q_{132}^{(k,0,\emptyset,0)}(t,x) = 
\frac{1}{1-t Q_{132}^{(k-1,0,\emptyset,0)}(t,x)}.
\end{equation} 
Thus, 
$$Q_{132}^{(0,0,\emptyset,0)}(t,0) =1$$ and for $k \geq 1$,
\begin{equation}\label{x=0k0e0gf}
Q_{132}^{(k,0,\emptyset,0)}(t,0) = 
\frac{1}{1-t Q_{132}^{(k-1,0,\emptyset,0)}(t,0)}.
\end{equation}
\end{theorem}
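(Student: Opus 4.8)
The plan is to deduce all three parts of the theorem from the functional equations that the decomposition of $S_n(132)$ by the position of $n$ has already produced in the discussion preceding the statement; the remaining work is purely algebraic. First I would solve the quadratic for the base case and fix the branch of the square root, then rearrange the linear functional equation to obtain the recursion for $k \geq 1$, and finally specialize both results at $x=0$.

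For the base case, recall that the recursion (\ref{00e0rec}), after multiplying by $t^n$ and summing over $n \geq 1$, gave the quadratic
\[
0 = 1 + Q_{132}^{(0,0,\emptyset,0)}(t,x)\,(tx - t - 1) + t\,\bigl(Q_{132}^{(0,0,\emptyset,0)}(t,x)\bigr)^2 .
\]
Applying the quadratic formula yields the two candidate roots $\frac{(1+t-tx) \pm \sqrt{(1+t-tx)^2 - 4t}}{2t}$, and the one step requiring care is fixing the sign. Since $Q_{132}^{(0,0,\emptyset,0)}(t,x)$ must be a power series in $t$ whose value at $t=0$ is $1$ (the contribution of the empty permutation), I would expand $\sqrt{(1+t-tx)^2 - 4t}$ about $t=0$ and check that only the minus branch is a power series with constant term $1$, the plus branch having a pole of the form $1/t$. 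This establishes (\ref{00e0gf--}).

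For $k \geq 1$, the analogous decomposition gave the recursion (\ref{k0e0rec}), equivalently the linear functional equation
\[
-1 + Q_{132}^{(k,0,\emptyset,0)}(t,x) = t\,Q_{132}^{(k-1,0,\emptyset,0)}(t,x)\,Q_{132}^{(k,0,\emptyset,0)}(t,x),
\]
and solving for $Q_{132}^{(k,0,\emptyset,0)}(t,x)$ gives (\ref{k0e0gf}) at once. For the $x=0$ statements I would substitute $x=0$ into (\ref{00e0gf--}) and simplify $\sqrt{(1+t)^2 - 4t} = \sqrt{(1-t)^2} = 1-t$ as a power series, so the numerator collapses to $(1+t)-(1-t)=2t$ and hence $Q_{132}^{(0,0,\emptyset,0)}(t,0)=1$; this is consistent with the fact that $\mmp^{(0,0,\emptyset,0)}$ counts left-to-right minima, every nonempty permutation having at least one, so no nonempty permutation contributes to the constant term. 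Finally, setting $x=0$ in (\ref{k0e0gf}) yields (\ref{x=0k0e0gf}).

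The only genuine obstacle is the branch selection for the base-case square root; the recursion for $k \geq 1$ and both $x=0$ specializations are immediate rearrangements of equations already in hand.
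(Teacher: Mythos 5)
Your proposal is correct and takes essentially the same approach as the paper: the theorem there is just the record of the computation done immediately before its statement, namely deriving the quadratic $0 = 1 + (tx-t-1)\,Q_{132}^{(0,0,\emptyset,0)}(t,x) + t\bigl(Q_{132}^{(0,0,\emptyset,0)}(t,x)\bigr)^2$ and the linear equation $-1+Q_{132}^{(k,0,\emptyset,0)}(t,x) = t\,Q_{132}^{(k-1,0,\emptyset,0)}(t,x)\,Q_{132}^{(k,0,\emptyset,0)}(t,x)$ from the decomposition by the position of $n$, then solving and setting $x=0$ (where $\sqrt{(1+t)^2-4t}=1-t$). Your explicit branch-selection argument (only the minus branch is a power series with constant term $1$, the plus branch behaving like $1/t$) is a detail the paper leaves tacit but is exactly the right justification.
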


\subsection{Explicit formulas for  $Q^{(k,0,\emptyset,0)}_{n,132}(x)|_{x^r}$}

We have computed  that 
\begin{eqnarray*}
&&Q_{132}^{(\emptyset,0,\emptyset,0)}(t,x) = 1+xt+\left(1+x^2\right) t^2+\left(2+2 x+x^3\right) t^3+\left(6+4 x+3 x^2+x^4\right) t^4+\\
&&\left(18+13 x+6 x^2+4 x^3+x^5\right) t^5+ \left(57+40 x+21 x^2+8 x^3+5 x^4+x^6\right) t^6+\\
&&\left(186+130 x+66 x^2+30 x^3+10 x^4+6 x^5+x^7\right) t^7+\\
&&\left(622+432 x+220 x^2+96 x^3+40 x^4+12 x^5+7 x^6+x^8\right) t^8+\\
&&\left(2120+1466 x+744 x^2+328 x^3+130 x^4+51 x^5+14 x^6+8 x^7+x^9\right) t^9+
\cdots .
\end{eqnarray*}

Clearly the highest degree term in $Q_{n,132}^{(\emptyset,0,\emptyset,0)}(x)$ is $x^n$, which comes from the permutation $n (n-1) \cdots 21$. It is 
easy to see that  $Q_{n,132}^{(\emptyset,0,\emptyset,0)}(x)|_{x^{n-1}} =0$.
That is, suppose $\sg = \sg_1 \cdots \sg_n \in S_n(132)$ and 
$\mmp^{(\emptyset,0,\emptyset,0)}(\sg) =n-1$. It can not 
be the case that $\sg_i=n$, where $i \geq 2$, since in such a situation, none of
$\sg_1, \ldots, \sg_i$ would match $MMP(\emptyset,0,\emptyset,0)$ 
in $\sg$. Thus, it must be the case that $\sg_1 =n$. But 
then it must be that case that 
$\mmp^{(\emptyset,0,\emptyset,0)}(\sg_2 \cdots \sg_n) =n-1$, which would mean 
that $\sg_2 \cdots \sg_n = (n-1)(n-2) \cdots 21$. But then 
$\sg = n(n-1) \cdots 21$ and $\mmp^{(\emptyset,0,\emptyset,0)}(\sg) =n$, 
which contracts our choice of $\sg$. Thus, there can be no 
such $\sg$. 
Similarly,
the coefficient of $x^{n-2}$ in  $Q_{n,132}^{(\emptyset,0,\emptyset,0)}(x)$ 
is $n-1$, which comes from the permutations 
$n (n-1) \cdots (i+2) i (i+1) (i-1) \cdots 21$ for $i =1, 
\ldots, n-1$.

The sequence $(Q_{n,132}^{(\emptyset,0,\emptyset,0)}(0))_{n \geq 1}$ 
is the Fine numbers (A000957 in the OEIS). The  
Fine numbers $(\mathbb{F}_n)_{n \geq 0}$ can be defined by the generating function 
$$\mathbb{F}(t) = \sum_{n \geq 0} \mathbb{F}_nt^n = \frac{1-\sqrt{1-4t}}{3t -\sqrt{1-4t}}.$$
It is straightforward to verify that 
$$\frac{1-\sqrt{1-4t}}{3t -\sqrt{1-4t}}\cdot\frac{1+\sqrt{1-4t}}{1+\sqrt{1-4t}} = 
\frac{1}{1+t-tC(t)}.$$
$\mathbb{F}_n$ counts the number of 2-Motzkin paths with no level steps at height 0;  
see \cite{ED1,ED2}. Here, a Motzkin path is a lattice path 
starting at $(0,0)$ and ending at $(n,0)$ that is formed 
by three types of steps, up-steps $(1,1)$, level steps $(1,0)$, and 
down steps $(1,-1)$,  and never goes below the $x$-axis. A 
$c$-Motzkin path is a Motzkin path where the level steps can be 
colored with any of $c$ colors.
$\mathbb{F}_n$ also counts the number of 
ordered rooted trees with $n$ edges that have root of even degree. \\

\begin{problem} Find simple bijective proofs of the facts 
that the number of $\sg \in S_n(132)$ such that 
$\mmp^{(\emptyset,0,\emptyset,0)}(\sg) =0$ equals the number of 2-Motzkin paths with no level steps at height 0 and that the number of $\sg \in S_n(132)$ such that $\mmp^{(\emptyset,0,\emptyset,0)}(\sg) =0$ equals the number of 
ordered rooted trees with $n$-edges that have root of even degree.
\end{problem}

The sequence $(Q_{n,132}^{(\emptyset,0,\emptyset,0)}(x)|_x)_{n \geq 1}$ 
is sequence A065601 in the OEIS, which counts the number of Dyck paths 
of length $2n$ with exactly one hill. A hill in a Dyck path is 
an up-step that starts on the $x$-axis and that is immediately followed 
by a down-step. 

Next we consider the constant term and the coefficient  
of $x$ in $Q_{n,132}^{(k,0,\emptyset,0)}(x)$ for $k \geq 1$. 

\begin{proposition}
For all $k \geq 1$, 
$$Q_{132}^{(k,0,\emptyset,0)}(t,0) = Q_{132}^{(k,0,0,0)}(t,0).$$
\end{proposition}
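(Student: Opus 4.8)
The plan is a short induction on $k$ that rests on a single observation: after setting $x=0$, both families satisfy the \emph{same} recursion $h_k = \frac{1}{1-th_{k-1}}$ and share the same initial value, so they must coincide term by term. First I would pin down the common seed. From $Q_{132}^{(0,0,0,0)}(t,x) = C(xt)$ we get $Q_{132}^{(0,0,0,0)}(t,0) = C(0) = C_0 = 1$, and Theorem \ref{thm:Qk0e0} states directly that $Q_{132}^{(0,0,\emptyset,0)}(t,0) = 1$ as well. Thus the two sequences agree at $k=0$.

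Next I would place the two recursions side by side at $x=0$. Setting $x=0$ in (\ref{Qk000}) gives, for every $k \geq 1$,
$$Q_{132}^{(k,0,0,0)}(t,0) = \frac{1}{1 - t\,Q_{132}^{(k-1,0,0,0)}(t,0)},$$
while (\ref{x=0k0e0gf}) asserts, for every $k \geq 1$,
$$Q_{132}^{(k,0,\emptyset,0)}(t,0) = \frac{1}{1 - t\,Q_{132}^{(k-1,0,\emptyset,0)}(t,0)}.$$
These are the identical map applied to the previous term. Hence, assuming inductively that $Q_{132}^{(k-1,0,0,0)}(t,0) = Q_{132}^{(k-1,0,\emptyset,0)}(t,0)$, the two right-hand sides coincide, and therefore so do the left-hand sides. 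With the base case $k=0$ established above, this proves the equality for all $k \geq 0$, in particular for all $k \geq 1$, which is the claim.

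The only thing I would watch is the index bookkeeping, which is really the sole place anything could slip. The corollary packaging of the $(k,0,0,0)$ recursion separates $k=1$ (equation (\ref{eq:Q100(0)})) from $k \geq 2$ (equation (\ref{x=0Qk000})), so I would either start the induction at $k=1$, using the explicit value $1/(1-t)$ on both sides — noting that (\ref{x=0k0e0gf}) at $k=1$ also returns $1/(1-t)$, since $Q_{132}^{(0,0,\emptyset,0)}(t,0)=1$ — or, more cleanly, invoke the unrestricted recursion (\ref{Qk000}) at $x=0$, which holds for all $k\geq 1$ and lets me anchor the induction at the common value $1$ at $k=0$. I expect no computational difficulty at all: the entire content is that the two $x=0$ recursions have the same shape and the same seed.
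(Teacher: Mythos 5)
Your proposal is correct and is essentially the paper's own argument: the paper likewise anchors at the common value $\frac{1}{1-t}$ for $k=1$ and observes that the $x=0$ recursions (\ref{x=0k0e0gf}) and (\ref{x=0Qk000}) have identical form, so the two generating functions agree for all $k \geq 1$. The paper merely supplements this with an optional direct combinatorial remark (the leftmost $MMP(k,0,0,0)$-match in $\sg$ is a left-to-right minimum and hence also an $MMP(k,0,\emptyset,0)$-match, so the avoidance classes coincide), which your induction does not need.
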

\begin{proof}
Note that  $Q_{132}^{(1,0,\emptyset,0)}(t,0) =\frac{1}{1-t} = Q_{132}^{(1,0,0,0)}(t,0)$.  
If we compare the recursions (\ref{x=0k0e0gf}) and 
(\ref{x=0Qk000}), we see that we have that  
$Q_{132}^{(k,0,\emptyset,0)}(t,0) = Q_{132}^{(k,0,0,0)}(t,0)$ 
for all $k \geq 1$. This fact is easy to see directly. 
That is, suppose that $\sg \in S_n(132)$ has a 
$MMP(k,0,0,0)$-match.  Then it is easy to see that if $i$ is 
the smallest $t$ such that $\sg_t$ matches $MMP(k,0,0,0)$ in $\sg$, 
then there can be no $j < i$ with $\sg_j < \sg_i$ because otherwise, 
$\sg_j$ would match $MMP(k,0,0,0)$.  That is, $\sg_i$ is also a $MMP(k,0,\emptyset,0)$-match. Thus, if $\sg$ has 
a $MMP(k,0,0,0)$-match, then it also has a $MMP(k,0,\emptyset,0)$-match.  The converse of this statement is trivial. Hence the number of $\sg \in S_n(132)$ with no 
$MMP(k,0,0,0)$-matches equals the number of $\sg \in S_n(132)$ with no 
$MMP(k,0,\emptyset,0)$-matches.
\end{proof}

The recursion (\ref{k0e0gf}) has the same form as 
the recursion (\ref{Qk000}). Thus, we can use the same 
method of proof that we did to establish the recursion (\ref{xrec3}) to 
prove that 
\begin{equation}\label{exrec3}
Q_{132}^{(k,0,\emptyset,0)}(t,x)|_x = Q_{132}^{(k-1,0,\emptyset,0)}(t,x)|_x 
\frac{t \frac{d}{dt} Q_{132}^{(k,0,\emptyset,0)}(t,0)}{\frac{d}{dt} t
Q_{132}^{(k-1,0,\emptyset,0)}(t,0)}.
\end{equation}
For example, we know that  
\begin{equation}
Q_{132}^{(1,0,\emptyset,0)}(t,x)|_x = Q_{132}^{(0,0,1,0)}(t,x)|_x = 
\sum_{n \geq 2} \binom{n}{2} t^n = \frac{t^2}{(1-t)^3}.
\end{equation}
 Since $Q_{132}^{(k,0,\emptyset,0)}(t,0)= Q_{132}^{(k,0,0,0)}(t,0)$ for 
all $k \geq 1$, one can use (\ref{exrec3}) and Mathematica to show that 
 \begin{eqnarray*}
Q_{132}^{(2,0,\emptyset,0)}(t,x)|_x &=& \frac{t^3}{(1-t)(1-2t)^2}, \\
Q_{132}^{(3,0,\emptyset,0)}(t,x)|_x &=& \frac{t^4}{(1-t)(1-3t+t^2)^2},\\
Q_{132}^{(4,0,\emptyset,0)}(t,x)|_x &=& \frac{t^5}{(1-t)^3(1-3t)^2}, \ \mbox{and}\\
Q_{132}^{(5,0,\emptyset,0)}(t,x)|_x &=& \frac{t^6}{(1-t)(1-5t+6t^2-t^3)^2}.
\end{eqnarray*}

We also have the following proposition concerning the 
coefficient of the highest power of $x$ in $Q_{n,132}^{(k,0,\emptyset,0)}(x)$.
\begin{proposition}\label{prop:k0e0high}
For all $k \geq 1$, the  highest power of $x$ appearing in 
$Q_{n,132}^{(k,0,\emptyset,0)}(x)$ is $x^{n-k}$, and for all $n \geq k$, 
$Q_{n,132}^{(k,0,\emptyset,0)}(x)|_{x^{n-k}}=1$.
\end{proposition}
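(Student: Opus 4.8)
The plan is to translate the statement into one about left-to-right minima and then handle its two assertions (that the top power is $x^{n-k}$ and that its coefficient is $1$) separately. First I would record the combinatorial meaning of the statistic: since the pattern is $MMP(k,0,\emptyset,0)$, an entry $\sg_i$ contributes to $\mmp^{(k,0,\emptyset,0)}(\sg)$ precisely when $\sg_i$ is a left-to-right minimum of $\sg$ (the $\emptyset$ in the third coordinate forbids any smaller entry to its left) and there are at least $k$ entries to its right larger than $\sg_i$. This immediately yields the upper bound: any matching entry $\sg_i$ has at least $k$ values exceeding it, so $\sg_i \le n-k$; since distinct matching positions carry distinct values in $\{1,\dots,n-k\}$, we get $\mmp^{(k,0,\emptyset,0)}(\sg)\le n-k$ for every $\sg\in S_n(132)$. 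Hence no power of $x$ above $x^{n-k}$ can occur.

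To see that $x^{n-k}$ actually occurs (for $n>k$) and to pin down its coefficient, I would exhibit the candidate extremal permutation
\[
\sg^\ast = (n-k)(n-k-1)\cdots 21\,(n-k+1)(n-k+2)\cdots n,
\]
i.e.\ the values $1,\dots,n-k$ in decreasing order followed by the values $n-k+1,\dots,n$ in increasing order. A short check on where a would-be $132$-pattern could sit (the prefix is decreasing, the suffix is increasing, and every suffix entry exceeds every prefix entry) shows $\sg^\ast\in S_n(132)$. Its left-to-right minima are exactly the prefix entries $n-k,\dots,1$, and each such entry has precisely the $k$ large values $n-k+1,\dots,n$ to its right and nothing else larger, so each of the $n-k$ prefix entries matches. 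Thus $\mmp^{(k,0,\emptyset,0)}(\sg^\ast)=n-k$, so the highest power is exactly $x^{n-k}$.

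The remaining, and main, task is uniqueness: I must show $\sg^\ast$ is the only permutation attaining $n-k$, which forces the leading coefficient to be $1$. Suppose $\mmp^{(k,0,\emptyset,0)}(\sg)=n-k$. By the upper-bound argument every value in $\{1,\dots,n-k\}$ must match, so each is a left-to-right minimum; since left-to-right minima decrease from left to right, the values $1,\dots,n-k$ occur in $\sg$ in the order $n-k,n-k-1,\dots,1$. The crux is a counting step: fix a small value $v$; every entry to its left is larger (it is a left-to-right minimum), and among the values exceeding $v$ the small ones $v+1,\dots,n-k$ all lie to its left, so writing $\ell$ for the number of large values ($>n-k$) lying to the left of $v$, the number of larger entries to the right of $v$ equals $(n-v)-\big[(n-k-v)+\ell\big]=k-\ell$; the matching condition $k-\ell\ge k$ forces $\ell=0$. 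Applying this with $v=1$ shows that no large value precedes the block of small values, so $\sg$ begins with $(n-k)\cdots 1$ and ends with the $k$ large values in some order; finally, any inversion in that trailing block would, together with any (necessarily smaller) prefix entry, create a $132$-pattern, so the block must be increasing and $\sg=\sg^\ast$.

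I expect the counting step in the uniqueness argument (deducing $\ell=0$, hence that all large values follow all small values) to be the delicate point; once that is in place, $132$-avoidance forces the increasing tail automatically and the leading coefficient is $1$. I would also flag the boundary case $n=k$: there $n-k=0$, the upper bound gives $\mmp^{(k,0,\emptyset,0)}\equiv 0$, and the coefficient in question is the constant term $Q_{k,132}^{(k,0,\emptyset,0)}(x)=C_k$, so the coefficient-$1$ assertion is really the content for $n>k$, exactly the range in which the uniqueness argument above applies.
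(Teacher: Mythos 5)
Your proof is correct and takes essentially the same approach as the paper: the paper's proof simply asserts that the unique permutation maximizing the number of $MMP(k,0,\emptyset,0)$-matches is $(n-k)(n-k-1)\cdots 21(n-k+1)(n-k+2)\cdots n$ and calls this easy to see, while you supply the details it omits (matches are left-to-right minima with $\geq k$ larger entries to the right, hence distinct values in $\{1,\ldots,n-k\}$, giving the upper bound; the counting step $\ell=0$ forces all large values after all small ones; and $132$-avoidance forces the increasing tail). Your flag of the boundary case is also apt: at $n=k$ the coefficient of $x^{n-k}=x^0$ is $C_k$, not $1$, which is consistent with the paper's own proof quietly restricting to $n\geq k+1$, so the proposition's ``for all $n\geq k$'' should really read $n\geq k+1$ (for $k\geq 2$).
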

\begin{proof}
It is easy to see that 
for any $k \geq 1$, the permutation $\sg \in S_n(132)$  with the maximal 
number of $MMP(k,0,\emptyset,0)$-matches for $n \geq k+1$, 
will be of the form $(n-k)(n-k-1) \cdots 21(n-k+1) (n-k+2) \cdots n$. 
Thus, the highest power of $x$ that occurs in $Q_{n,132}^{(k,0,\emptyset,0)}(x)$is $x^{n-k}$ which appears with coefficient 1. 
\end{proof}

Using Theorem \ref{thm:Qk0e0}, one can compute that 
\begin{eqnarray*}
&&Q_{132}^{(0,0,\emptyset,0)}(t,x) = 
1+xt+x(1+x) t^2+x\left(1+3 x+x^2\right) t^3+x\left(1+6 x+6 x^2+x^3\right) t^4+\\&&\left(1+10 x+20 x^2+10 x^3+x^4\right) 
t^5+x\left(1+15 x+50 x^2+50 x^3+15 x^4+x^5\right) t^6+\\
&&x\left(1+21 x+105 x^2+175 x^3+105 x^4+21 x^5+x^6\right) t^7+\\
&&x\left(1+28 x+196 x^2+490 x^3+490 x^4+196 x^5+28 x^6+x^7\right) t^8+\\
&&x\left(1+36 x+336 x^2+1176 x^3+1764 x^4+1176 x^5+336 x^6+36 x^7+x^8\right) 
t^9 + \cdots. 
\end{eqnarray*}

If we compare $Q_{132}^{(0,0,\emptyset,0)}(t,x)$ to 
$Q_{132}^{(0,0,1,0)}(t,x)$, we see that for 
$n \geq 1$, 
\begin{equation}\label{compare1}
Q_{n,132}^{(0,0,\emptyset,0)}(x) = xQ_{n,132}^{(0,0,1,0)}(x).
\end{equation}

Note the $Q_{n,132}^{(0,0,\emptyset,0)}(x)$ has an obvious 
symmetry property. That is, the following holds.
\begin{theorem}\label{thm:symmetry}
For all $n \geq 1$, 
$$x^{n+1}Q_{n,132}^{(0,0,\emptyset,0)}\left(\frac{1}{x}\right) = 
Q_{n,132}^{(0,0,\emptyset,0)}(x).$$
\end{theorem}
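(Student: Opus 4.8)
The statement is a \emph{palindromicity} claim. Since $\mmp^{(0,0,\emptyset,0)}(\sg)$ counts exactly the elements with no point in quadrant III, i.e. the left-to-right minima of $\sg$, if we write $Q_{n,132}^{(0,0,\emptyset,0)}(x)=\sum_k a_k x^k$ with $a_k=\#\{\sg\in S_n(132):\LRmin(\sg)=k\}$, then the asserted identity is equivalent to $a_k=a_{n+1-k}$ for all $k$; that is, the distribution of $\LRmin$ on $S_n(132)$ is symmetric about $(n+1)/2$. The quickest route is to invoke (\ref{compare1}): since $Q_{n,132}^{(0,0,\emptyset,0)}(x)=xQ_{n,132}^{(0,0,1,0)}(x)$, the claim is equivalent to the degree-$(n-1)$ palindromicity $x^{n-1}Q_{n,132}^{(0,0,1,0)}(1/x)=Q_{n,132}^{(0,0,1,0)}(x)$ already observed for $Q^{(0,0,1,0)}$. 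To keep the argument self-contained — and because supplying a proof of that very symmetry is the point of this part of the section — I would instead establish the symmetry of $Q^{(0,0,\emptyset,0)}$ directly.

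My plan is induction on $n$ using the recursion (\ref{00e0rec}). Write $f_n(x)=Q_{n,132}^{(0,0,\emptyset,0)}(x)$, so that $f_0=1$, $f_1=x$, and for $n\ge1$
\[
f_n(x)=x f_{n-1}(x)+\sum_{i=2}^n f_{i-1}(x)\,f_{n-i}(x).
\]
Set $g_n(x)=x^{n+1}f_n(1/x)$; the goal is $g_n=f_n$ for $n\ge1$, with base case $n=1$ (where $x^2\cdot(1/x)=x=f_1$). For the inductive step I substitute $x\mapsto 1/x$ in the recursion and multiply through by $x^{n+1}$. Applying the inductive identity $f_m(1/x)=x^{-(m+1)}f_m(x)$ to each factor with $m\ge1$, and $f_0(1/x)=1$ wherever an $f_0$ appears, every power of $x$ cancels inside the middle summands $\sum_{i=2}^{n-1}f_{i-1}(x)f_{n-i}(x)$, so that block is returned unchanged.

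The one delicate point — and the main thing to get right — is that the symmetry fails at $n=0$ (since $f_0=1\ne x$), so the two terms carrying a lone $f_0$ factor must be tracked by hand: the ``$i=1$'' term $xf_{n-1}(x)$ and the $i=n$ summand $f_{n-1}(x)f_0=f_{n-1}(x)$. Under the reversal these two \emph{swap} their powers of $x$ (the first loses its $x$, becoming $f_{n-1}$; the second gains one, becoming $xf_{n-1}$), so their sum is preserved. Collecting the three pieces reproduces exactly the right-hand side of the recursion, giving $g_n=f_n$ and closing the induction. As an independent cross-check I would also verify the equivalent functional equation $Q_{132}^{(0,0,\emptyset,0)}(t,x)-1=x\bigl(Q_{132}^{(0,0,\emptyset,0)}(xt,1/x)-1\bigr)$ straight from the closed form (\ref{00e0gf--}); there the entire identity collapses to the invariance of the discriminant $(1+t-tx)^2-4t=(1-t+tx)^2-4xt$, which is immediate. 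Either way, combining the result with (\ref{compare1}) then delivers the promised symmetry of $Q_{n,132}^{(0,0,1,0)}$.
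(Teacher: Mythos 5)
Your proof is correct, but it is genuinely different from the one in the paper. You argue algebraically: induction on $n$ via the recursion (\ref{00e0rec}), with the key observation that under $x \mapsto 1/x$ followed by multiplication by $x^{n+1}$ the interior summands $f_{i-1}f_{n-i}$ ($2 \le i \le n-1$) are fixed while the two boundary terms $xf_{n-1}$ and $f_{n-1}f_0$ swap — this bookkeeping is exactly right, since by the inductive hypothesis $x^{n+1}\cdot x^{-1}f_{n-1}(1/x) = f_{n-1}(x)$ and $x^{n+1}f_{n-1}(1/x)f_0(1/x) = xf_{n-1}(x)$, and your alternative verification from the closed form (\ref{00e0gf--}) also checks out, as $(1+t-tx)^2-4t = (1-t+tx)^2-4xt$ holds identically and yields the functional equation $Q_{132}^{(0,0,\emptyset,0)}(t,x)-1 = x\bigl(Q_{132}^{(0,0,\emptyset,0)}(tx,1/x)-1\bigr)$, which is coefficient-by-coefficient the theorem for $n \ge 1$. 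The paper instead gives a (sketched) combinatorial proof: identifying $\mmp^{(0,0,\emptyset,0)}$ with $\LRmin$ and $\mmp^{(0,0,1,0)}$ with $n-\LRmin$, it builds a recursive bijection $T$ from $S_n(132)$ to $S_n(123)$ preserving left-to-right minima (via a Simion--Schmidt-style filling map $X$), and then applies reverse-complement on $S_n(123)$, producing an explicit involution-like bijection $Y = T^{-1}X^{-1}rcXT$ on $S_n(132)$ that swaps the statistics $\mmp^{(0,0,\emptyset,0)}-1$ and $\mmp^{(0,0,1,0)}$. What each buys: your induction is shorter and fully rigorous where the paper's argument is only sketched, but it produces no bijection; the paper's route delivers the explicit map promised back in Section 4 (in the discussion surrounding (\ref{compare1})) and phrases the result as a joint-distribution symmetry of the pair $(\mmp^{(0,0,\emptyset,0)}-1,\mmp^{(0,0,1,0)})$ — though, as the paper itself notes, since $\mmp^{(0,0,\emptyset,0)}(\sg)+\mmp^{(0,0,1,0)}(\sg)=n$, that bivariate statement carries no more enumerative content than the univariate symmetry you prove, so your argument loses nothing in strength, only in combinatorial explicitness. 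One small caution: your first paragraph's shortcut via (\ref{compare1}) would indeed be circular in context, and you rightly discard it.
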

\begin{proof}
For $\sigma \in S_n$, define the statistic $\nLRmin(\sg)=n-\LRmin(\sg)$. Since the statistic $\mmp^{(0,0,\emptyset,0)}$ is the same as the $\LRmin$ statistic and the statistic $\mmp^{(0,0,1,0)}$ is the same as the $\nLRmin$ statistic, Theorem \ref{thm:symmetry} shows that the statistics $\LRmin$ and $1 + \nLRmin$ are equidistributed on 132-avoiding permutations.  In fact, it proves a more general claim, namely that on $S_n(132)$, the joint distribution of the pair $(\mmp^{(0,0,\emptyset,0)}-1,\mmp^{(0,0,1,0)})$ is the same as the distribution of $(\mmp^{(0,0,1,0)},\mmp^{(0,0,\emptyset,0)}-1)$, which often is not the case but is here because the sum $\mmp^{(0,0,\emptyset,0)}(\sg)+\mmp^{(0,0,1,0)}(\sg)$ equals the length of the permutation $\sg$. That is, 
if we let 
\begin{equation}\label{Rn}
R_n(x,y) = \sum_{\sg \in S_n(132)} 
x^{\mmp^{(0,0,\emptyset,0)}(\sg)} y^{\mmp^{(0,0,1,0)}(\sg)},
\end{equation}
then the theorem shows that $yR_n(x,y)$ is symmetric 
in $x$ and $y$ for all $n$.

We shall sketch a combinatorial proof of this fact.  
First we construct a bijection $T$ 
from $S_n(132)$ onto $S_n(123)$ that will make the fact 
that  $yR_n(x,y)$ is symmetric apparent. 
If $\sg = \sg_1 \cdots \sg_k \in S_k$ and 
$\tau = \tau_1 \cdots \tau_{\ell} \in S_{\ell}$, then 
we let 
$$\sg \oplus \tau = \sg_1 \cdots \sg_k (k+\tau_1) \cdots (k + \tau_{\ell})$$
and 
$$\sg \ominus \tau = (\ell +\sg_1) \cdots (\ell +\sg_k) 
\tau_1 \cdots \tau_{\ell}.$$
Then $\bigcup_n S_n(132)$ is recursively generated 
by starting with the permutation $1$ and closing under the 
operations of $\sg \ominus \tau$ and $\sg \oplus 1$.
Then we can define a recursive bijection $T: \bigcup_n S_n(132) \rightarrow 
\bigcup_n S_n(123)$ by letting 
$T(1) = 1$, $T(\sg \ominus \tau) = T(\sg) \ominus T(\tau)$, 
and $T(\sg \oplus 1) = X(T(\sg))$, where $X(\sg)$ 
is constructed from $\sg$ as follows.\\
\ \\
Take the permutation $\sg \in S_n(123)$ 
and fix the positions and values of the
left-to-right minima.
Append one position to the end of $\sg$, and
renumber the non-left-to-right minima in decreasing order. For
example, if $\sg = 4762531$, then 4, 2, and 1 are the left-to-right
minima. After fixing those positions and values and appending one
position, the permutation looks like $4xx2xx1x$. Then we fill in the
$x$s with 8, 7, 6, 5, 3, in that order, to obtain 48726513. The map $X$ is essentially based on the {\em Simion-Schmidt bijection} described in \cite[Chapter 4]{kit}.\\
\ \\
It is straightforward to prove by induction  that if 
$T(\sg) = \tau$, then 
$\sg_j$ matches the pattern $MMP(0,0,\emptyset,0)$ in $\sg$ 
if and only if $\tau_j$ matches the pattern $MMP(0,0,\emptyset,0)$ in $\tau$. That is, 
the map $T$ preserves left-to-right minima. Note that if 
$\sg_j$ does not match the pattern $MMP(0,0,\emptyset,0)$ in $\sg$, 
then it must match the pattern $MMP(0,0,1,0)$ in $\sg$. 
Thus, it follows that 
\begin{eqnarray*}
R_n(x,y) &=& \sum_{\sg \in S_n(132)} 
x^{\mmp^{(0,0,\emptyset,0)}(\sg)} y^{\mmp^{(0,0,1,0)}(\sg)} \\
&=& \sum_{\sg \in S_n(123)} x^{\LRmin(\sg)} y^{\nLRmin(\sg)}.
\end{eqnarray*}

Next observe that specifying the left-to-right minima of 
a permutation $\sg \in S_n(123)$ completely 
determines $\sg$.  That is, if $\sg_{i_1} > \sg_{i_2} > \dots > \sg_{i_k}$ are the left-to-right minima 
of $\sg$, where $1 = i_1 < i_2 < \cdots < i_k \leq n$, then the remaining elements must be placed in decreasing order, as in the map $X$, since any pair that are not decreasing will form a $123$-pattern with a previous left-to-right minimum.
This means that $X:S_n(123) \to S_{n+1}(123)$ is one-to-one,
and since $\mbox{LRmin}(X(\sg)) = \mbox{LRmin}(\sg)$ and 
 $\mbox{non-LRmin}(X(\sg)) = 1+ \mbox{non-LRmin}(\sg)$, it follows 
that 
$$yR_n(x,y) =\sum_{\sg \in S_n(123)} x^{\mbox{LRmin}(X(\sg))} 
y^{\mbox{non-LRmin}(X(\sg))}.$$
But it is easy to see that for any permutation $X(\sg)$, reversing 
and then complementing $X(\sg)$, which rotates the graph of $X(\sigma)$ by $180^\circ$ around its center, produces a permutation of the form 
$X(\tau)$ for some $\tau \in S_n(123)$ such that 
$\mbox{LRmin}(X(\sg)) = \mbox{non-LRmin}(X(\tau))$ and 
$\mbox{non-LRmin}(X(\sg)) = \mbox{LRmin}(X(\tau))$. 
Thus, 
$$\sum_{\sg \in S_n(123)} x^{\mbox{LRmin}(X(\sg))} 
y^{\mbox{non-LRmin}(X(\sg))}$$ 
is symmetric in $x$ and $y$. Hence, 
$yR_n(x,y)$ is symmetric in $x$ and $y$. Thus, if $r$ and $c$ are the reverse and complement maps, respectively, then $Y: S_n(132) \to S_n(132)$ given by $Y(\sg) = T^{-1}X^{-1}rcXT(\sigma)$ is a bijection that swaps the statistics $\mmp(0,0,\emptyset,0)-1$ and $\mmp(0,0,1,0)$.
\end{proof}

We have computed that 
\begin{eqnarray*}
&&Q_{132}^{(1,0,\emptyset,0)}(t,x) = 1+t+(1+x) t^2+
\left(1+3 x+x^2\right) t^3+\\
&&\left(1+6 x+6 x^2+x^3\right) t^4+\left(1+10 x+20 x^2+10 x^3+x^4\right) t^5+\\
&&\left(1+15 x+50 x^2+50 x^3+15 x^4+x^5\right) t^6+\\
&&\left(1+21 x+105 x^2+175 x^3+105 x^4+21 x^5+x^6\right) t^7+\\
&&\left(1+28 x+196 x^2+490 x^3+490 x^4+196 x^5+28 x^6+x^7\right) t^8+\\
&&\left(1+36 x+336 x^2+1176 x^3+1764 x^4+1176 x^5+336 x^6+36 x^7+x^8\right) t^9+\cdots.
\end{eqnarray*}

One can observe that $Q_{132}^{(1,0,\emptyset,0)}(t,x) =
Q_{132}^{(0,0,1,0)}(t,x)$. We provide here a combinatorial proof of this fact. Actually, we will prove a stronger statement that we record as the following theorem.

\begin{theorem} The two pairs of statistics $(MMP(1,0,\emptyset,0),MMP(0,0,1,0))$ and \\
$(MMP(0,0,1,0),MMP(1,0,\emptyset,0))$ have the same joint distributions on $S_n(132)$.\end{theorem}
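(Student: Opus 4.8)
The plan is to prove the equivalent statement that the polynomial
\[
P_n(x,y) = \sum_{\sg \in S_n(132)} x^{\mmp^{(1,0,\emptyset,0)}(\sg)}\, y^{\mmp^{(0,0,1,0)}(\sg)}
\]
is symmetric in $x$ and $y$ for every $n$, which is exactly the assertion that the two ordered pairs of statistics are equidistributed. I would first recall the combinatorial meaning of the two statistics: $\mmp^{(0,0,1,0)}(\sg) = \nLRmin(\sg)$ counts the non-left-to-right minima, while $\mmp^{(1,0,\emptyset,0)}(\sg)$ counts the positions $j$ at which $\sg_j$ is a left-to-right minimum (empty third quadrant) but not a right-to-left maximum (nonempty first quadrant).

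The heart of the argument is a recursion obtained by splitting $\sg \in S_n^{(i)}(132)$ at the position $i$ of its largest entry, writing $A = \red[\sg_1 \cdots \sg_{i-1}]$ and $B = \sg_{i+1} \cdots \sg_n$. I would verify two facts. First, every left-to-right minimum of $A$ has the value $n$ to its right, hence has a larger entry in its first quadrant and can never be a right-to-left maximum of $\sg$; thus the entries of $A$ contribute exactly $\LRmin(A)$ to $\mmp^{(1,0,\emptyset,0)}(\sg)$, whereas $B$ contributes recursively, giving $\mmp^{(1,0,\emptyset,0)}(\sg) = \LRmin(A) + \mmp^{(1,0,\emptyset,0)}(B)$. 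Second, $n$ is a non-left-to-right minimum precisely when $i \geq 2$, so $\mmp^{(0,0,1,0)}(\sg) = \nLRmin(A) + \nLRmin(B) + \chi(i \geq 2)$. Writing $R_m(x,y) = \sum_{\alpha \in S_m(132)} x^{\LRmin(\alpha)} y^{\nLRmin(\alpha)}$ for the polynomial of (\ref{Rn}) (these agree since $\mmp^{(0,0,\emptyset,0)} = \LRmin$), these two identities combine to
\[
P_n(x,y) = \sum_{i=1}^n R_{i-1}(x,y)\, y^{\chi(i \geq 2)}\, P_{n-i}(x,y).
\]

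Passing to the generating functions $\mathcal{P} = \sum_{n \geq 0} t^n P_n$ and $\mathcal{R} = \sum_{m \geq 0} t^m R_m$ (with $P_0 = R_0 = 1$), the recursion becomes $\mathcal{P} - 1 = t\mathcal{P} + yt(\mathcal{R}-1)\mathcal{P}$, so
\[
\mathcal{P}(t,x,y) = \frac{1}{1 - t - yt(\mathcal{R}(t,x,y) - 1)}.
\]
Hence $\mathcal{P}(t,x,y) = \mathcal{P}(t,y,x)$ if and only if $y(\mathcal{R}(t,x,y)-1) = x(\mathcal{R}(t,y,x)-1)$, and comparing coefficients of $t^n$ this is exactly $y R_n(x,y) = x R_n(y,x)$ for all $n \geq 1$. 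That is precisely the symmetry of $y R_n(x,y)$ established in Theorem~\ref{thm:symmetry}, so equating the generating functions finishes the proof.

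The step I expect to require the most care is the first identity in the recursion: one must notice the asymmetry that the left block $A$ feeds into $\mmp^{(1,0,\emptyset,0)}$ through its \emph{plain} left-to-right-minimum count $\LRmin(A)$ — because the record value $n$ sitting to its right automatically destroys the right-to-left-maximum property of every entry of $A$ — whereas the right block $B$ feeds in recursively through $\mmp^{(1,0,\emptyset,0)}(B)$. It is exactly this asymmetry that makes the $\LRmin/\nLRmin$ polynomial $R_{i-1}$ (rather than $P_{i-1}$) appear for the $A$-block, which is what reduces the entire claim to the previously proven symmetry of $y R_n(x,y)$. A direct recursive bijection swapping the two statistics is presumably also possible, but tracking the $\chi(i \geq 2)$ shift on $n$ makes it more delicate than the generating-function reduction above.
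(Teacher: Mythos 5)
Your proof is correct, and it reaches the theorem by a genuinely different mechanism than the paper, although the two arguments share the same skeleton. The paper makes exactly the observations you isolate as the heart of the matter --- for $\sg \in S_n^{(i)}(132)$ with $i>1$, the maximum $n$ is an occurrence of $MMP(0,0,1,0)$, the left-to-right minima of $A_i(\sg)$ are (because of $n$ sitting to their right) precisely the occurrences of $MMP(1,0,\emptyset,0)$ in that block, the non-left-to-right minima there are occurrences of $MMP(0,0,1,0)$, and $B_i(\sg)$ contributes recursively --- but it then converts these facts into an explicit recursive bijection $\varphi$ on $\bigcup_n S_n(132)$: when $i>1$ it applies the map $Y$ from the proof of Theorem~\ref{thm:symmetry} (which swaps the statistics $\LRmin-1$ and $\nLRmin$) to $\red[A_i(\sg)]$, appends the new maximum via $\oplus 1$, and recurses on $B_i(\sg)$ via $\ominus$. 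You instead package the identical block analysis into the recursion $P_n(x,y)=\sum_{i=1}^n R_{i-1}(x,y)\,y^{\chi(i\geq 2)}\,P_{n-i}(x,y)$, solve to get $\mathcal{P}=\bigl(1-t-yt(\mathcal{R}-1)\bigr)^{-1}$, and reduce the symmetry of $\mathcal{P}$ in $x$ and $y$ to the identity $yR_n(x,y)=xR_n(y,x)$ for all $n\geq 1$; that identity is exactly the symmetry of $yR_n(x,y)$ recorded and proved in the proof of Theorem~\ref{thm:symmetry} (equivalently, the statement of that theorem combined with the homogeneity coming from $\mmp^{(0,0,\emptyset,0)}(\sg)+\mmp^{(0,0,1,0)}(\sg)=n$), so your citation of it is legitimate since it precedes this result in the paper. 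Both routes thus use the same decomposition by the position of $n$ and the same key prior result; the trade-off is that the paper's version produces an explicit bijection interchanging the two statistics --- at the cost of the delicate bookkeeping around $Y(\red[A_i(\sg)])\oplus 1$ and the position shift for $n$ that you correctly flagged as the subtle point --- whereas your version needs only the statement-level symmetry rather than the bijection $Y$ itself, makes the verification routine algebra, and yields the closed form for the joint generating function $\mathcal{P}(t,x,y)$ as a by-product.
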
 

\begin{proof}  We will construct a map $\varphi$ on $\cup_n S_n(132)$, recursively interchanging occurrences of the involved patterns. The base case, $n=1$, obviously holds: $\varphi(1):=1$ and neither of the patterns occur in 1. 

Assume that the claim holds for 132-avoiding permutations of length less than $n$, and consider a permutation $\sigma \in S_n^{(i)}$ for some $i$. Consider two cases.  \ \\

\noindent {\bf Case 1.}  $i=1$. In this case, we can define $\varphi(\pi):=n\varphi(B_i(\sigma))$. Since $n$ is neither an occurrence of $MMP(1,0,\emptyset,0)$ nor an occurrence of $MMP(0,0,1,0)$, we get the desired property by the induction hypothesis.\ \\

\noindent {\bf Case 2.} $i > 1$. Note that $n$ is an occurrence of the pattern  $MMP(0,0,1,0)$, and because of $n$, each left-to-right minimum in $A_i(\sigma)$ is actually an occurrence of the pattern $MMP(1,0,\emptyset,0)$. Further, each non-left-to-right minimum in $A_i(\sigma)$ is obviously an occurrence of the pattern $MMP(0,0,1,0)$. If $i=n$, we let $\varphi(\sigma):=Y(\red[A_i(\sigma)])\oplus 1$, where $Y$ is as defined in the proof of Theorem \ref{thm:symmetry}, and for $1 < i < n$, we let $\varphi(\sigma):=(Y(\red[A_i(\sigma)])\oplus 1)\ominus\varphi(B_i(\sigma))$. Indeed, $\varphi(B_i(\sigma))$ will interchange the occurrences of the patterns by the induction hypothesis. Also, as in the proof of Theorem \ref{thm:symmetry}, $Y(\red[A_i(\sigma)])\oplus 1$ will exchange the number of occurrences of the patterns in $A_i(\sigma)n$.
\end{proof}


We have computed  that 
\begin{align*}
&Q_{132}^{(2,0,\emptyset,0)}(t,x) = 1+t+2 t^2+(4+x) t^3+\left(8+5 x+x^2\right) t^4+\\
&\left(16+17 x+8 x^2+x^3\right) t^5+
\left(32+49 x+38 x^2+12 x^3+x^4\right) t^6+\\
&\left(64+129 x+141 x^2+77 x^3+17 x^4+x^5\right) t^7+\\
&\left(128+321 x+453 x^2+361 x^3+143 x^4+23 x^5+x^6\right) t^8+\\
&\left(256+769 x+1326 x^2+1399 x^3+834 x^4+247 x^5+30 x^6+x^7\right) t^9+ 
\cdots. 
\end{align*}
The sequence $(Q_{n,132}^{(2,0,\emptyset,0)}(x)|_{x})_{n \geq 2}$ is 
sequence A000337 in the OEIS, whose $n$th term is \\
$(n-1)2^n +1$. Thus, 
$Q_{n,132}^{(2,0,\emptyset,0)}(x)|_{x} = (n-3)2^{n-2}+1$ for $n \geq 2$.   

We have computed that 
\begin{align*}
&Q_{132}^{(3,0,\emptyset,0)}(t,x) = 1+t+2 t^2+5 t^3+(13+x) t^4+\left(34+7 x+x^2\right) t^5+\ \ \ \ \ \ \ \ \ \ \ \ \ \ \ \ \ \ \ \ \ \ \ \ \\
&\left(89+32 x+10 x^2+x^3\right) t^6+\left(233+122 x+59 x^2+14 x^3+x^4\right) t^7+\\
&\left(610+422 x+272 x^2+106 x^3+19 x^4+x^5\right) t^8+\\
&\left(1597+1376 x+1090 x^2+591 x^3+182 x^4+25 x^5+x^6\right) t^9+\cdots, 
\end{align*}

\begin{align*}
&Q_{132}^{(4,0,\emptyset,0)}(t,x) = 1+t+2 t^2+5 t^3+ 14 t^4+(41+x) t^5+
\left(122+9 x+x^2\right) t^6+\ \ \ \ \ \ \ \ \ \ \ \ \ \\
&\left(365+51 x+12 x^2+x^3\right) t^7+\left(1094+235 x+84 x^2+16 x^3+x^4\right) t^8+\\
&\left(3281+966 x+454 x^2+139 x^3+21 x^4+x^5\right) t^9+\cdots, \ \mbox{and}
\end{align*}

\begin{align*}
&Q_{132}^{(5,0,\emptyset,0)}(t,x) = 1+t+2 t^2+5 t^3+14 t^4+
42 t^5+(131+x) t^6+\left(417+11 x+x^2\right) t^7+\\
&\left(1341+74 x+14 x^2+x^3\right) t^8+
\left(4334+396 x+113 x^2+18 x^3+x^4\right) t^9+\cdots. 
\end{align*}

The second highest power of $x$ that occurs in 
$Q_{n,132}^{(k,0,\emptyset,0)}(x)$ is $x^{n-k-1}$.  Our 
next result will show that 
$Q_{n,132}^{(k,0,\emptyset,0)}(x)|_{x^{n-k-1}}$ 
has a regular behavior for large enough $n$. That is, 
we have the following theorem. 
\begin{theorem} For $n \geq 3$ and $k \geq 1$, 
\begin{equation}\label{eq:k0e0minus}
Q_{n+k-1,132}^{(k,0,\emptyset,0)}|_{x^{n-2}}=2(k-1)+\binom{n}{2}.
\end{equation}
\end{theorem}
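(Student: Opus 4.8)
The plan is to induct on $k$ while holding fixed the gap between the length and $k$. Writing $N=n+k-1$, the quantity in question is $\beta_{N,k}:=Q_{N,132}^{(k,0,\emptyset,0)}(x)|_{x^{N-k-1}}$, the coefficient of the \emph{second}-highest power of $x$ (the highest being $x^{N-k}$, with coefficient $1$, by Proposition~\ref{prop:k0e0high}). I will prove the two-variable recursion $\beta_{N,k}=\beta_{N-1,k-1}+2$, valid for $k\ge 2$ and $N\ge k+2$, together with the base value $\beta_{N,1}=\binom{N}{2}$; reindexing by $N=n+k-1$ then gives $\beta_{n+k-1,k}=\beta_{n+k-2,k-1}+2$, so that $n$ is preserved as $k$ decreases, and unwinding to $k=1$ yields $2(k-1)+\binom{n}{2}$.

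For the base case $k=1$ I would use the identity $Q_{132}^{(1,0,\emptyset,0)}(t,x)=Q_{132}^{(0,0,1,0)}(t,x)$ established above, which reduces $\beta_{N,1}$ to $Q_{N,132}^{(0,0,1,0)}(x)|_{x^{N-2}}=\binom{N}{2}$ by Theorem~\ref{0010coef}(4); the hypothesis $n\ge 3$ becomes $N\ge 3$, the range in which that formula is valid.

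For the inductive step I would expand the recursion~(\ref{k0e0rec}),
\[
Q_{N,132}^{(k,0,\emptyset,0)}(x)=\sum_{i=1}^N Q_{i-1,132}^{(k-1,0,\emptyset,0)}(x)\,Q_{N-i,132}^{(k,0,\emptyset,0)}(x),
\]
and read off the coefficient of $x^{N-k-1}$ summand by summand. The bookkeeping rests on two facts from Proposition~\ref{prop:k0e0high}: a block $Q_{m,132}^{(j,0,\emptyset,0)}(x)$ is the constant $C_m$ when $m\le j$, and otherwise has degree $m-j$ with top coefficient $1$. Hence the $i$th summand has degree $D(i)=\max(0,i-k)+\max(0,N-i-k)$, and a short check of the ranges $i\in[1,k-1]$, $i\in[k,N-k]$, and $i\in[N-k+1,N]$ shows that, for $k\ge 2$, $D(i)<N-k-1$ except when $i\in\{1,N-1,N\}$ (the middle range gives $D(i)=N-2k\le N-k-2$, while the outer ranges force $i\le 1$ or $i\ge N-1$). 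The three surviving terms give: at $i=1$, the top coefficient $1$ of $Q_{N-1,132}^{(k,0,\emptyset,0)}$; at $i=N-1$, the top coefficient $1$ of $Q_{N-2,132}^{(k-1,0,\emptyset,0)}$ (using $Q_{1,132}^{(k,0,\emptyset,0)}=1$ for $k\ge 2$); and at $i=N$, the second-highest coefficient $Q_{N-1,132}^{(k-1,0,\emptyset,0)}(x)|_{x^{N-k-1}}=\beta_{N-1,k-1}$. Summing produces $\beta_{N,k}=\beta_{N-1,k-1}+2$.

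The only real obstacle is this degree count isolating exactly the three contributing positions of $N$; it relies entirely on Proposition~\ref{prop:k0e0high} guaranteeing that each block's extreme coefficient is $1$ and that short blocks are constant. I would take care with the boundary indices — in particular that $N\ge k+2$ (i.e.\ $n\ge 3$) keeps $1<N-1<N$ and makes the possible overlap of the outer ranges harmless, since any summand with both blocks constant has degree $0<N-k-1$. With the recursion and base case in hand, the closed form $\beta_{n+k-1,k}=2(k-1)+\binom{n}{2}$ follows immediately by induction on $k$.
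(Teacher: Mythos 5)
Your proposal is correct and follows essentially the same route as the paper: the same base case via the identity $Q_{132}^{(1,0,\emptyset,0)}(t,x)=Q_{132}^{(0,0,1,0)}(t,x)$ together with Theorem \ref{0010coef}(4), and the same inductive step reading off the coefficient from the recursion (\ref{k0e0rec}), where only the terms $i\in\{1,N-1,N\}$ survive the degree count (your $\beta_{N,k}=\beta_{N-1,k-1}+2$ is just the paper's step from $k$ to $k+1$ written in the downward direction). If anything, your explicit degree bookkeeping $D(i)=\max(0,i-k)+\max(0,N-i-k)$ is stated more carefully than the paper's corresponding estimate.
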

\begin{proof}
Note that $Q_{132}^{(1,0,\emptyset,0)}(t,x) =  Q_{132}^{(0,0,1,0)}(t,x)$ 
and by Theorem \ref{0010coef}, we have 
that \\
$Q_{n,132}^{(0,0,1,0)}(x)|_{x^{n-2}} = \binom{n}{2}$ for $n \geq 3$.  Thus, 
the theorem holds for $k=1$. 

By induction, assume that 
$Q_{n+k-1,132}^{(k,0,\emptyset,0)}|_{x^{n-2}}=2(k-1)+\binom{n}{2}$.
We know by (\ref{k0e0rec}) that 

\begin{equation}\label{vk0e0rec}
 Q_{n+k,132}^{(k+1,0,\emptyset,0)}(x) =  
\sum_{i=1}^{n+k} Q_{i-1,132}^{(k,0,\emptyset,0)}(x)
Q_{n+k-i,132}^{(k+1,0,\emptyset,0)}(x).
\end{equation}
 Note that  for $2 \leq i \leq n-k-2$, the highest coefficient of $x$ 
that appears in 
$Q_{n+k-i,132}^{(k+1,0,\emptyset,0)}(x)$ is $x^{n+k-i-(k+1)} = x^{n-i-1}$ 
. 
However the highest coefficient of $x$ in 
$Q_{i-1,132}^{(k,0,\emptyset,0)}(x)$ is $x^{i-2}$ so 
that the only terms on the RHS of (\ref{vk0e0rec}) that 
can contribute to the coefficient of $x^{n-2}$ are $i=1$, $i=n+k-1$, 
and $i=n+k$.  By Proposition \ref{prop:k0e0high}, we know 
that 
$$Q_{n+k-1,132}^{(k+1,0,\emptyset,0)}(x)|_{x^{n-2}}= 1 = 
Q_{n+k-2,132}^{(k,0,\emptyset,0)}(x)|_{x^{n-2}},$$
so the $i=1$ and $i=n+k-1$ terms in (\ref{vk0e0rec}) 
contribute 2 to $Q_{n+k,132}^{(k+1,0,\emptyset,0)}(x)|_{x^{n-2}}$. 
Now the $i=n+k$ term  in (\ref{vk0e0rec}) contributes  
$$Q_{n+k-1,132}^{(k,0,\emptyset,0)}(x)|_{x^{n-2}}= 2(k-1) + \binom{n}{2}$$ 
to $Q_{n+k,132}^{(k+1,0,\emptyset,0)}(x)|_{x^{n-2}}$.  Thus, 
$$Q_{n+k,132}^{(k+1,0,\emptyset,0)}(x)|_{x^{n-2}} = 2k +\binom{n}{2}.$$
\end{proof}

The sequences $(Q_{n,132}^{(3,0,\emptyset,0)}(x)|_{x})_{n \geq 4}$, $(Q_{n,132}^{(4,0,\emptyset,0)}(x)|_{x})_{n \geq 5}$, and $(Q_{n,132}^{(5,0,\emptyset,0)}(x)|_{x})_{n \geq 5}$ do 
not appear in the OEIS.

\section{The function $Q_{132}^{(\emptyset,0,k,0)}(t,x)$}

In this section, we shall compute $Q_{132}^{(\emptyset,0,k,0)}(t,x)$ for 
$k \geq 0$. 
First we compute the generating function for 
$Q_{n,132}^{(\emptyset,0,0,0)}(x)$. Observe that 
$n$ will always match the pattern $MMP(\emptyset,0,0,0)$ 
in any $\sg \in S_n$.  
For $i \geq 1$,  it is easy to see that $A_i(\sg)$ will contribute nothing to $\mmp^{(\emptyset,0,0,0)}(\sg)$, since 
the presence of $n$ to the right of an element in  $A_i(\sg)$ ensures 
that it does not match  
the  pattern $MMP(\emptyset,0,0,0)$ in $\sg$. Similarly, $B_i(\sg)$ will contribute $\mmp^{(\emptyset,0,0,0)}(\red[B_i(\sg)])$ to 
$\mmp^{(\emptyset,0,0,0)}(\sg)$, since 
neither $n$ nor any of the elements to the left of $n$ have 
any effect on whether an element in  $B_i(\sg)$ matches 
the  pattern $MMP(\emptyset,0,0,0)$ in $\sg$. 
Thus, 
\begin{equation}\label{e000rec}
 Q_{n,132}^{(\emptyset,0,0,0)}(x) =  
x\sum_{i=1}^n C_{i-1}
Q_{n-i,132}^{(\emptyset,0,0,0)}(x).
\end{equation}
Multiplying both sides of (\ref{e000rec}) by $t^n$ and summing 
over all $n \geq 1$, we obtain that 
\begin{equation*}\label{e000rec2}
-1+Q_{132}^{(\emptyset,0,0,0)}(t,x) = 
tx C(t) \ Q_{132}^{(\emptyset,0,0,0)}(t,x),
\end{equation*}
so
\begin{equation*}
Q_{132}^{(\emptyset,0,0,0)}(t,x) = 
\frac{1}{1-txC(t)}.
\end{equation*}

Next suppose that $k \geq 1$. In this case $n$ in 
$\sg \in S_n^{(i)}(132)$ will match the pattern $MMP(\emptyset,0,k,0)$ in $\sg$ if 
and only if $i > k$. 
For $i \geq 1$,  it is easy to see that $A_i(\sg)$ will contribute nothing to $\mmp^{(\emptyset,0,k,0)}(\sg)$, since 
the presence of $n$ to the right ensures that none of these elements 
will match the pattern $MMP(\emptyset,0,k,0)$ in $\sg$.  
Similarly, $B_i(\sg)$ will contribute $\mmp^{(\emptyset,0,k,0)}(\red[B_i(\sg)])$ to $\mmp^{(\emptyset,0,k,0)}(\sg)$, 
since 
neither $n$ nor any of the elements to the left of $n$ have 
any effect on whether an element in  $B_i(\sg)$ matches 
the  pattern $MMP(\emptyset,0,k,0)$ in $\sg$. 
Thus, 
\begin{equation}\label{e0k0rec}
 Q_{n,132}^{(\emptyset,0,k,0)}(x) =  
\sum_{i=1}^k C_{i-1} 
Q_{n-i,132}^{(\emptyset,0,k,0)}(x) + x\sum_{i=k+1}^n C_{i-1} 
Q_{n-i,132}^{(\emptyset,0,k,0)}(x).
\end{equation}
Multiplying both sides of (\ref{e0k0rec}) by $t^n$ and summing 
over all $n \geq 1$, we obtain that 
\begin{equation*}\label{e0k0rec2}
-1+Q_{132}^{(\emptyset,0,k,0)}(t,x) = t (\sum_{j=0}^{k-1} C_j t^j) 
Q_{132}^{(\emptyset,0,k,0)}(t,x) + xt Q_{132}^{(\emptyset,0,k,0)} \ 
(C(t) - \sum_{j=0}^{k-1} C_j t^j).
\end{equation*}
Thus, we have the following theorem. 
\begin{theorem}\label{thm:Qe0k0}
\begin{equation}\label{e000gf}
Q_{132}^{(\emptyset,0,0,0)}(t,x) = 
\frac{1}{1-txC(t)}.
\end{equation}
For $k \geq 1$,  
\begin{equation}\label{e0k0gf}
Q_{132}^{(\emptyset,0,k,0)}(t,x) = 
\frac{1}{1- tx C(t) - t(1-x)(\sum_{j=0}^{k-1} C_j t^j)}
\end{equation}
and  
\begin{equation}\label{x=0e0k0gf}
Q_{132}^{(\emptyset,0,k,0)}(t,0) = 
\frac{1}{1-t(\sum_{j=0}^{k-1} C_j t^j)}. 
\end{equation}
\end{theorem}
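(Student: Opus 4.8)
The plan is to derive each of the three formulas directly from the recursions (\ref{e000rec}) and (\ref{e0k0rec}), which were obtained by classifying a permutation $\sg \in S_n^{(i)}(132)$ according to the position $i$ of its largest entry $n$ and recording the three contributions to $\mmp^{(\emptyset,0,k,0)}$: the left block $A_i(\sg)$ contributes nothing (because $n$ sits to its right, placing a point in quadrant I and so violating the $\emptyset$-condition), the right block $B_i(\sg)$ contributes $\mmp^{(\emptyset,0,k,0)}(\red[B_i(\sg)])$ (since $n$ and all of $A_i(\sg)$ lie in quadrant II relative to every element of $B_i(\sg)$, and quadrant II is unconstrained), and $n$ itself contributes a match exactly when the $i-1$ entries of $A_i(\sg)$ fill quadrant III to depth $k$, i.e.\ when $i > k$. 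With these contributions in hand, the only remaining task is to convert the convolution recursions into functional equations and solve them.

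For the base case $k=0$, I would multiply (\ref{e000rec}) by $t^n$ and sum over $n \geq 1$. The convolution $\sum_{i} C_{i-1} Q_{n-i,132}^{(\emptyset,0,0,0)}(x)$ factors as $C(t)\,Q_{132}^{(\emptyset,0,0,0)}(t,x)$, so the sum becomes the linear equation $-1 + Q_{132}^{(\emptyset,0,0,0)}(t,x) = tx\,C(t)\,Q_{132}^{(\emptyset,0,0,0)}(t,x)$, which I solve immediately to obtain (\ref{e000gf}).

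For $k \geq 1$, the same multiply-and-sum step applied to (\ref{e0k0rec}) produces two convolutions, split at $i=k$. The first, $\sum_{i=1}^k C_{i-1} Q_{n-i,132}^{(\emptyset,0,k,0)}(x)$, sums to $t\bigl(\sum_{j=0}^{k-1} C_j t^j\bigr)\,Q_{132}^{(\emptyset,0,k,0)}(t,x)$, while the second (carrying the factor $x$) completes the Catalan tail and contributes $xt\bigl(C(t) - \sum_{j=0}^{k-1} C_j t^j\bigr)\,Q_{132}^{(\emptyset,0,k,0)}(t,x)$. Collecting the coefficient of $Q_{132}^{(\emptyset,0,k,0)}(t,x)$ and solving the resulting linear equation gives (\ref{e0k0gf}); setting $x=0$ there, where the Catalan-tail term vanishes, yields (\ref{x=0e0k0gf}).

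Because both functional equations are \emph{linear} in the unknown series---there is no quadratic term and hence no square root to untangle---the algebra is routine, and I expect the only real care needed to lie in the bookkeeping of the truncated Catalan generating function $\sum_{j=0}^{k-1}C_j t^j$: one must track precisely which convolution terms fall below the cutoff $i=k$ (and so escape the factor $x$) and verify that the remaining terms reassemble the full $C(t)$ minus that truncation. This index management is the main, though modest, obstacle; everything else is a direct solve of a first-degree equation.
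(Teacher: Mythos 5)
Your proposal is correct and follows essentially the same route as the paper: the paper derives the recursions (\ref{e000rec}) and (\ref{e0k0rec}) by exactly the classification you describe (position of $n$, with $A_i(\sg)$ contributing nothing, $B_i(\sg)$ contributing $\mmp^{(\emptyset,0,k,0)}(\red[B_i(\sg)])$, and $n$ matching precisely when $i>k$), then multiplies by $t^n$, sums over $n\geq 1$, and solves the resulting linear functional equation, with (\ref{x=0e0k0gf}) obtained by setting $x=0$. Your observation that the equations are linear in the unknown series, so no quadratic/square-root step arises, matches the paper's computation exactly.
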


\subsection{Explicit formulas for  $Q^{(\emptyset,0,k,0)}_{n,132}(x)|_{x^r}$}

We have seen the constant terms $Q_{n132}^{(\emptyset,0,k,0)}(0)$ 
previously. That is, we have the following proposition. 

\begin{proposition}
$Q_{132}^{(\emptyset,0,k,0)}(t,0) = Q^{(0,0,k,0)}(t,0)$ 
for all $k \geq 1$.
\end{proposition}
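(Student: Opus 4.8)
The plan is to compare the two generating functions for the constant-term specializations directly, using the explicit formulas already derived in the two relevant theorems. From Theorem \ref{thm:Qe0k0}, equation \eqref{x=0e0k0gf}, we have
\[
Q_{132}^{(\emptyset,0,k,0)}(t,0) = \frac{1}{1-t\left(\sum_{j=0}^{k-1} C_j t^j\right)},
\]
and from Theorem \ref{thm:Q00k0} we have
\[
Q_{132}^{(0,0,k,0)}(t,0) = \frac{1}{1-t\left(C_0+C_1 t+\cdots+C_{k-1}t^{k-1}\right)}.
\]
These two expressions are literally identical, since $\sum_{j=0}^{k-1} C_j t^j = C_0 + C_1 t + \cdots + C_{k-1} t^{k-1}$. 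So the entire proof amounts to observing that the right-hand sides of \eqref{x=0e0k0gf} and the constant-term formula in Theorem \ref{thm:Q00k0} coincide.

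First I would simply cite both formulas and note their equality. This is the cleanest route and requires no new computation whatsoever, so there is essentially no obstacle to overcome; the work was already done in establishing Theorems \ref{thm:Q00k0} and \ref{thm:Qe0k0}.

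Alternatively, and perhaps more in the spirit of the paper's frequent combinatorial explanations, I would give a direct bijective or logical argument that the number of $\sg \in S_n(132)$ avoiding $MMP(\emptyset,0,k,0)$ equals the number avoiding $MMP(0,0,k,0)$. The key observation is that avoiding either pattern restricts only the third-quadrant structure. A permutation $\sg$ has no point matching $MMP(0,0,k,0)$ precisely when no element has $k$ or more smaller elements to its left; a permutation has no point matching $MMP(\emptyset,0,k,0)$ precisely when no element has both an empty second quadrant (no larger element to its left, i.e.\ it is a left-to-right maximum) and $k$ or more smaller elements to its left. One would argue, as in the analogous Proposition relating $Q_{132}^{(k,0,\emptyset,0)}(t,0)$ and $Q_{132}^{(k,0,0,0)}(t,0)$ earlier in the paper, that the extremal matching element can always be taken to be a left-to-right maximum: if some $\sg_i$ matches $MMP(0,0,k,0)$, then choosing $i$ to be the leftmost such index forces $\sg_i$ to be a left-to-right maximum, whence $\sg_i$ also matches $MMP(\emptyset,0,k,0)$. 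The converse inclusion is immediate since the $\emptyset$ constraint only makes matching harder.

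I expect the main (and only) subtlety to be deciding which proof to present: the formula-comparison is trivially correct but uninformative, whereas the combinatorial argument must correctly verify that the leftmost $MMP(0,0,k,0)$-matching element really is a left-to-right maximum. The latter claim is true because if $\sg_i$ has at least $k$ smaller elements to its left but is not a left-to-right maximum, then some earlier $\sg_j > \sg_i$ also has at least $k$ smaller elements to its left (the same ones, minus possibly $\sg_i$ itself, which lies below $\sg_j$ but to its right), contradicting minimality of $i$ after a short case check. Since the formulas are manifestly equal, I would present the one-line formula comparison as the proof and remark that the equality can also be seen combinatorially in the manner just sketched.
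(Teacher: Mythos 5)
Your primary proof---citing \eqref{x=0e0k0gf} and the constant-term formula in Theorem \ref{thm:Q00k0} and observing that the two right-hand sides are literally the same rational function---is correct and is precisely how the paper proves the proposition, so on that front there is nothing to fix.

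However, the combinatorial remark you append is wrong as stated, in two linked ways. First, you misread the pattern: in $MMP(\emptyset,0,k,0)$ the $\emptyset$ occupies the \emph{first} coordinate, which governs quadrant I (elements greater than $\sg_i$ and to its \emph{right}), not quadrant II; a match of $MMP(\emptyset,0,k,0)$ is therefore a \emph{right-to-left maximum} with at least $k$ smaller elements to its left, not a left-to-right maximum. Second, the lemma you lean on---that the leftmost $MMP(0,0,k,0)$-match is a left-to-right maximum---is false: in $\sg = 312$ with $k=1$, the leftmost (indeed the only) match is $\sg_3 = 2$, which is not a left-to-right maximum, and your parenthetical justification fails because the $k$ smaller elements to the left of $\sg_i$ need not lie to the left of an earlier larger $\sg_j$ (here $1$ lies left of $2$ but right of $3$; note that $312$ contains no occurrence of $132$, so $132$-avoidance does not rescue the claim). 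The paper's remark uses the correct dual choice: take $i$ \emph{largest} such that $\sg_i$ matches $MMP(0,0,k,0)$; then any $j>i$ with $\sg_j > \sg_i$ would inherit the $\geq k$ smaller elements to the left of $\sg_i$ and so would itself match $MMP(0,0,k,0)$, contradicting maximality, whence $\sg_i$ has empty quadrant I and matches $MMP(\emptyset,0,k,0)$ (the converse being trivial). Since you present the formula comparison as the proof and the combinatorial argument only as a side remark, your proof stands, but the remark must be corrected to the rightmost/right-to-left-maximum form or dropped.
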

\begin{proof}
The proposition follows immediately from 
Theorems \ref{thm:Q00k0} and \ref{thm:Qe0k0}. That is, we have 
$$Q_{132}^{(0,0,k,0)}(t,0) = 
\frac{1}{1-t(\sum_{j=0}^{k-1} C_j t^j)} = Q_{132}^{(\emptyset,0,k,0)}(t,0).$$
This fact is easy to see directly. 
That is, suppose that $\sg = \sg_1 \cdots \sg_n \in S_n(132)$ and 
$\sg$ contains a 
$MMP(0,0,k,0)$-match.  Then it is easy to see that if $i$ is 
the largest such that $\sg_i$ matches $MMP(0,0,k,0)$, 
then there can be no $j > i$ with $\sg_j > \sg_i$ because otherwise, 
$\sg_j$ would match $MMP(0,0,k,0)$.  Thus, if $\sg$ has 
a $MMP(0,0,k,0)$-match, then it also has a $MMP(\emptyset,0,k,0)$-match. Again, the converse is trivial.
Hence the number of $\sg \in S_n(132)$ with no 
$MMP(0,0,k,0)$-matches equals the number of $\sg \in S_n(132)$ with no 
$MMP(\emptyset,0,k,0)$-matches.
\end{proof}

We have computed that 
\begin{eqnarray*}
&&Q_{132}^{(\emptyset,0,0,0)}(t,x) = 1+x t+\left(x+x^2\right) t^2+\left(2 x+2 x^2+x^3\right) t^3+\left(5 x+5 x^2+3 x^3+x^4\right) t^4+ \\
&&\left(14 x+14 x^2+9 x^3+4 x^4+x^5\right) t^5+
\left(42 x+42 x^2+28 x^3+14 x^4+5 x^5+x^6\right) t^6+\\
&&\left(132 x+132 x^2+90 x^3+48 x^4+20 x^5+6 x^6+x^7\right) t^7+\\
&&\left(429 x+429 x^2+297 x^3+165 x^4+75 x^5+27 x^6+7 x^7+x^8\right) t^8+\\
&&\left(1430 x+1430 x^2+1001 x^3+572 x^4+275 x^5+110 x^6+35 x^7+8 x^8+x^9\right) t^9+\cdots.
\end{eqnarray*}

Recall that 
$Q_{132}^{(1,0,0,0)}(t,x) = 
\frac{1}{1-tC(tx)}$, so $Q_{132}^{(1,0,0,0)}(tx,\frac{1}{x}) 
=Q_{132}^{(\emptyset,0,0,0)}(t,x)$. This can easily be explained by the fact that every $\sigma_i$, $1 \le i \le n$, matches either $MMP(1,0,0,0)$ or $MMP(\emptyset,0,0,0)$.

We have computed that  
\begin{eqnarray*}
&&Q_{132}^{(\emptyset,0,1,0)}(t,x) = 1+t+(1+x) t^2+(1+4 x) t^3+\left(1+12 x+x^2\right) t^4+\\
&&\left(1+34 x+7 x^2\right) t^5+\left(1+98 x+32 x^2+x^3\right) t^6+\left(1+294 x+124 x^2+10 x^3\right) t^7+\\
&&\left(1+919 x+448 x^2+61 x^3+x^4\right) t^8+\left(1+2974 x+1576 x^2+298 x^3+13 x^4\right) t^9+\cdots.
\end{eqnarray*}

In this case, it is easy to see that 
the only $\sg \in S_n(132)$ that avoids the pattern 
$MMP(\emptyset,0,1,0)$ is the strictly decreasing permutation. 
Thus, $Q_{n,132}^{(\emptyset,0,1,0)}(0) =1$ for all $n \geq 1$. 

It is also easy to see that the permutation that maximizes 
the number of matches of $MMP(\emptyset,0,1,0)$ in $S_{2n}(132)$ is 
$(2n-1)(2n) (2n-3) (2n-2) \cdots 12$, which explains why the highest 
power of $x$ in  $Q_{2n,132}^{(\emptyset,0,1,0)}(x)$ is $x^n$, which 
has coefficient $1$.  

More generally, we have the following 
proposition.
\begin{proposition} For all $k \geq 1$, the 
highest power of $x$ occurring in 
$Q_{kn,132}^{(\emptyset,0,k-1,0)}(x)$ is $x^n$, 
with coefficient $(C_{k-1})^n$.
\end{proposition}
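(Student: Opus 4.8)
The plan is to classify the permutations by the position of their largest entry and reuse verbatim the case analysis that produced (\ref{e0k0rec}), only with $k$ replaced by $k-1$. Writing $m=kn$ and taking $\sg \in S_m^{(i)}(132)$, every entry of $A_i(\sg)$ has the maximum to its right and above it, so none of them can match $MMP(\emptyset,0,k-1,0)$; the maximum itself matches iff it has at least $k-1$ smaller entries to its left, i.e. iff $i \geq k$; and each entry of $B_i(\sg)$ matches in $\sg$ exactly when it matches in $\red[B_i(\sg)]$, since the entries of $A_i(\sg)$ together with the maximum all lie to its left and above it and hence contribute to neither quadrant I nor quadrant III. Consequently
$$\mmp^{(\emptyset,0,k-1,0)}(\sg) = \chi(i \geq k) + \mmp^{(\emptyset,0,k-1,0)}(\red[B_i(\sg)]).$$

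Next I would establish that the maximum value $P(m)$ of $\mmp^{(\emptyset,0,k-1,0)}$ over $S_m(132)$ equals $\lfloor m/k\rfloor$. This follows by induction from the displayed identity, since $P(m)=\max_{1\le i\le m}\left(\chi(i\ge k)+P(m-i)\right)$: the choice $i=k$ gives $1+P(m-k)=1+\lfloor (m-k)/k\rfloor=\lfloor m/k\rfloor$, while every $i<k$ gives $P(m-i)\le\lfloor (m-1)/k\rfloor\le\lfloor m/k\rfloor$ and every $i>k$ gives $1+P(m-i)\le 1+\lfloor (m-k)/k\rfloor=\lfloor m/k\rfloor$. In particular $P(kn)=n$, so $x^n$ is indeed the highest power of $x$ occurring in $Q_{kn,132}^{(\emptyset,0,k-1,0)}(x)$.

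The main step is then to count the permutations in $S_{kn}(132)$ attaining exactly $n$ matches; call this number $N(kn)$, so that $N(kn)=Q_{kn,132}^{(\emptyset,0,k-1,0)}(x)|_{x^n}$. By the identity above, a permutation $\sg\in S_{kn}^{(i)}(132)$ attains $n$ matches only if $\chi(i\ge k)+P(kn-i)=n$. For $1\le i<k$ this forces $P(kn-i)=n$, which is impossible because $P(kn-i)=\lfloor (kn-i)/k\rfloor=n-1$; for $i>k$ one has $\chi(i\ge k)+P(kn-i)\le 1+(n-2)=n-1$; hence $i=k$ is forced, and then $\red[B_k(\sg)]$ must attain its own maximum of $n-1$ matches in $S_{k(n-1)}(132)$. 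Since $A_k(\sg)$ contributes nothing to the count, its reduction may be any of the $C_{k-1}$ elements of $S_{k-1}(132)$, independently of the choice of $B_k(\sg)$, which yields the recursion $N(kn)=C_{k-1}\,N(k(n-1))$.

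Finally, with the base case $N(0)=1$ (the empty permutation), induction on $n$ gives $N(kn)=(C_{k-1})^n$, completing the proof. I expect the only delicate point to be the forcing argument that $i=k$: one must treat the ranges $i<k$, $i=k$, and $i>k$ separately and invoke the exact value $P(m)=\lfloor m/k\rfloor$ at the non-multiple-of-$k$ lengths $kn-i$, rather than only at multiples of $k$. Everything else is a routine unwinding of the recursion already used to derive (\ref{e0k0rec}).
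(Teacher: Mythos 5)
Your proof is correct and takes essentially the same approach as the paper: unwinding your forced choice $i=k$ at every level of the recursion yields exactly the block permutations $\tau^{(n)}(kn)\,\tau^{(n-1)}(k(n-1))\cdots\tau^{(1)}k$ with each $\tau^{(i)}\in S_{k-1}(132)$ that the paper asserts (with an ``it is easy to see'') to be the maximizers, your lemma $P(m)=\lfloor m/k\rfloor$ at all lengths simply making that forcing rigorous. The only cosmetic point is that your achievability clause ``the choice $i=k$'' in the induction presumes $m\geq k$; for $m<k$ the claim $P(m)=0$ is immediate from your upper bounds, so nothing is lost.
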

\begin{proof} 
It is easy to see 
that the permutations that maximize 
the number of matches of $MMP(\emptyset,0,k-1,0)$ in $S_{kn}(132)$ 
are the permutations that have 
blocks consisting of 
$$\tau^{(n)}(kn) \tau^{(n-1)} (k(n-1)) \tau^{(n-2)} (k(n-2)) \cdots 
\tau^{(1)}k,$$ 
where for each $i =1, \ldots, n$, $\tau^{(i)}$ is a permutation 
of $(i-1)k+1,\ldots, (i-1)k +k-1$ that avoids 132. Since there are $C_{k-1}$ choices for each $\tau^{(i)}$, the result follows.
\end{proof}

It is also not difficult to see that the highest 
power of $x$ in 
$Q_{2n+1,132}^{(\emptyset,0,1,0)}(x)$ is $x^n$, which has the coefficient 
$3n+1$. That is, if $\sg \in S_n(132)$  and 
$\mmp^{(\emptyset,0,1,0)}(\sg) =n$, then $\sg$ must be equal to either
\begin{eqnarray*}
&&(2n+1)(2n-1)(2n)  (2n-3) (2n-2) \cdots 12, \\ 
&&(2n-1)(2n)(2n+1) (2n-3) (2n-2) \cdots 12, \ \mbox{or}\\
&&(2n)(2n-1)(2n+1) (2n-3) (2n-2) \cdots 12, 
\end{eqnarray*}
or be of the form  $(2n)(2n+1)\tau$, where $\tau \in S_{2n-1}(132)$, 
which has $n-1$ occurrences of $MMP(\emptyset,0,1,0)$. 
Thus, for 
$n \geq 2$, 
$$Q_{2n+1,132}^{(\emptyset,0,1,0)}(x)|_{x^n} = 3 + Q_{2n-1,132}^{(\emptyset,0,1,0)}(x)|_{x^{n-1}}.$$
The result now follows by induction, 
since $Q_{3,132}^{(\emptyset,0,1,0)}(x)|_{x}=4$.

The sequence $(Q_{n,132}^{(\emptyset,0,1,0)}(x)|_{x})_{n \geq 2}$ 
is A014143 in the OEIS, which has the generating function 
$\frac{1-2t\sqrt{1-4t}}{2t^2(1-t)^2}$. That is, one can easily compute 
that 
\begin{eqnarray*}
Q^{(\emptyset,0,1,0)}_{132}(t,x)|_x &=& \frac{1}{1+t(x-1)-xtC(t)}|_x 
=  \frac{1}{1-(tx(C(t)-1)+t)}|_x\\
&=& \sum_{n \geq 1} (tx(C(t)-1)+t)^n|_x = \sum_{n \geq 1} \binom{n}{1}t(C(t)-1)t^{n-1}\\
&=& (C(t) -1) \sum_{n \geq 1} nt^n = \left(\frac{1-\sqrt{1-4t}}{2t} -1\right) \frac{t}{(1-t)^2} \\
&=& \frac{1-2t -\sqrt{1-4t}}{2(1-t)^2}.
\end{eqnarray*}

We have computed that 
\begin{eqnarray*}
&&Q_{132}^{(\emptyset,0,2,0)}(t,x) = 1+t+2 t^2+(3+2 x) t^3+(5+9 x) t^4+\\
&&(8+34 x) t^5+\left(13+115 x+4 x^2\right) t^6+
\left(21+376 x+32 x^2\right) t^7+\\
&&\left(34+1219 x+177 x^2\right) t^8+\left(55+3980 x+819 x^2+8 x^3\right) t^9+\cdots.
\end{eqnarray*}

The sequence $(Q_{n,132}^{(\emptyset,0,2,0)}(0))_{n \geq 2}$ is 
the Fibonacci numbers. We can give a combinatorial explanation for this fact as well. That is, the permutations in 
$S_n(132)$ that avoid the pattern 
$MMP(\emptyset,0,2,0)$ are of the form $n\alpha$, where $\alpha$ is 
a permutation  in $S_{n-1}(132)$ that avoids 
$MMP(\emptyset,0,2,0)$, or of the form $(n-1)n\beta$, where $\beta$ is 
a permutation  in $S_{n-2}(132)$ that avoids 
$MMP(\emptyset,0,2,0)$. It follows that 
$$Q_{n,132}^{(\emptyset,0,2,0)}(0) = Q_{n-1,132}^{(\emptyset,0,2,0)}(0) + Q_{n-2,132}^{(\emptyset,0,2,0)}(0).$$

The sequence $(Q_{n,132}^{(\emptyset,0,2,0)}(x)|_{x})_{n \geq 3}$ does 
not appear in the OEIS.

We have computed that 
\begin{eqnarray*}
&&Q_{132}^{(\emptyset,0,3,0)}(t,x) = 1+t+2 t^2+5 t^3+(9+5 x) t^4+(18+24 x) t^5+(37+95 x) t^6+\\
&&(73+356 x) t^7+\left(146+1259 x+25 x^2\right) t^8+\left(293+4354 x+215 x^2\right) t^9+\cdots.
\end{eqnarray*}

The sequence $(Q_{n,132}^{(\emptyset,0,3,0)}(0))_{n \geq 0}$ 
is sequence A077947 in the OEIS, which has the generating function 
$\frac{1}{1-x-x^2 -2x^3}$.  However, the sequence $(Q_{n,132}^{(\emptyset,0,3,0)}(x)|_{x})_{n \geq 4}$ does 
not appear in the OEIS.

We have computed that 
\begin{eqnarray*}
&&Q_{132}^{(\emptyset,0,4,0)}(t,x) = 1+t+2 t^2+5 t^3+14 t^4+(28+14 x) t^5+(62+70 x) t^6+\\
&&(143+286 x) t^7+(331+1099 x) t^8+(738+4124 x) t^9+\cdots.
\end{eqnarray*}

The sequence $(Q_{n,132}^{(\emptyset,0,4,0)}(0))_{n \geq 0}$ does 
not appear in the OEIS.

\end{document}